\newcommand{\N}{{\mathds{N}}}
\newcommand{\Z}{{\mathds{Z}}}
\newcommand{\R}{{\mathds{R}}}
\newcommand{\C}{{\mathds{C}}}
\newcommand{\D}{{\mathfrak{D}}}
\newcommand{\A}{{\mathfrak{A}}}
\newcommand{\B}{{\mathfrak{B}}}
\newcommand{\Lip}{{\mathsf{L}}}
\newcommand{\Hilbert}{{\mathscr{H}}}
\newcommand{\dist}{{\mathrm{dist}}}
\newcommand{\propinquity}{{\mathsf{\Lambda}}}
\newcommand{\Kantorovich}[1]{{\mathsf{mk}_{#1}}}
\newcommand{\Haus}[1]{{\mathsf{Haus}_{#1}}}
\newcommand{\StateSpace}{{\mathscr{S}}}
\newcommand{\mongekant}{{Mon\-ge-Kan\-to\-ro\-vich metric}}
\newcommand{\Lqcms}{{\JLL} quantum compact metric space}
\newcommand{\qcms}{quantum compact metric space}
\newcommand{\lcqms}{quantum locally compact metric space}
\newcommand{\unit}{1}
\newcommand{\sa}[1]{{\mathfrak{sa}({#1})}}
\newcommand{\mulip}{{\mathfrak{Lip}_1}}
\newcommand{\Adm}{{\mathrm{Adm}}}
\newcommand{\JLL}{Lei\-bniz}
\newcommand{\dom}[1]{{\operatorname*{dom}({#1})}}
\newcommand{\diam}[2]{{\mathrm{diam}\left({#1},{#2}\right)}}
\newcommand{\bridgeset}[2]{{\text{\calligra Bridges}\left({#1}\rightarrow{#2}\right)}}
\newcommand{\Lipball}[2]{{\mathfrak{Lip}_{#1}\left({#2}\right)}}
\newcommand{\bridgereach}[2]{{\varrho\left({#1}\middle|{#2}\right)}}
\newcommand{\bridgeheight}[2]{{\varsigma\left({#1}\middle|{#2}\right)}}
\newcommand{\bridgelength}[2]{{\lambda\left({#1}\middle|{#2}\right)}}
\newcommand{\treklength}[1]{{\lambda\left({#1}\right)}}
\newcommand{\startrekset}[3]{{\text{\calligra Treks}\left(\left({#2}\right)\stackrel{#1}{\longrightarrow}\left({#3}\right)\right)}}
\newcommand{\bridgenorm}[2]{{\mathsf{bn}_{ {#1}  }\left({#2}\right)}}
\newcommand{\treknorm}[2]{{\mathsf{tn}_{#1}\left({#2}\right)}}
\newcommand{\itineraries}[3]{{\text{\calligra Itineraries}\left({#2}\stackrel{#1}{\longrightarrow} {#3}  \right)}}
\newcommand{\lambdaitineraries}[4]{{\text{\calligra Itineraries}\left( {#2}\stackrel{{#1} }{\longrightarrow}{#3}\middle|{#4}  \right)}}
\newcommand{\Jordan}[2]{{{#1}\circ{#2}}} 
\newcommand{\Lie}[2]{{\left\{{#1},{#2}\right\}}} 
\newcommand{\targetsetbridge}[3]{{\mathfrak{t}_{#1}\left({#2}\middle\vert{#3}\right)}}
\newcommand{\targetsettrek}[3]{{\mathfrak{T}_{#1}\left({#2}\middle\vert{#3}\right)}}
\newcommand{\LCQMS}{{\mathcal{L}^\ast}}
\theoremstyle{plain}
\newtheorem{theorem}{Theorem}[section]
\newtheorem{claim}[theorem]{Claim}
\newtheorem{corollary}[theorem]{Corollary}
\newtheorem{lemma}[theorem]{Lemma}
\newtheorem{proposition}[theorem]{Proposition}
\newtheorem{theorem-definition}[theorem]{Theorem-Definition}
\theoremstyle{definition}
\newtheorem{definition}[theorem]{Definition}
\newtheorem{notation}[theorem]{Notation}
\newtheorem{convention}[theorem]{Convention}
\theoremstyle{remark}
\newtheorem{example}[theorem]{Example}
\newtheorem{remark}[theorem]{Remark}
\renewcommand{\geq}{\geqslant}
\renewcommand{\leq}{\leqslant}
\numberwithin{equation}{section}
\begin{document}


\title[The Quantum Gromov-Haus\-dorff Propinquity]{The Quantum Gromov-Haus\-dorff Propinquity}
\author{Fr\'{e}d\'{e}ric Latr\'{e}moli\`{e}re}
\email{frederic@math.du.edu}
\urladdr{http://www.math.du.edu/\symbol{126}frederic}
\address{Department of Mathematics \\ University of Denver \\ Denver CO 80208}

\date{\today}
\subjclass[2000]{Primary:  46L89, 46L30, 58B34.}
\keywords{Noncommutative metric geometry, quantum Gromov-Haus\-dorff distance, Monge-Kantorovich distance, Quantum Metric Spaces, Lip-norms, compact C*-metric spaces, Leibniz seminorms, Quantum Tori, Finite dimensional approximations.}

\begin{abstract}
We introduce the quantum Gromov-Haus\-dorff pro\-pin\-qui\-ty, a new distance between quantum compact metric spaces, which extends the Gromov-Haus\-dorff distance to noncommutative geometry and strengthens Rieffel's quantum Gro\-mov-Haus\-dorff distance and Rieffel's proximity by making *-iso\-morphism a necessary condition for distance zero, while being well adapted to Leibniz seminorms. This work offers a natural solution to the long-standing problem of finding a framework for the development of a theory of Leibniz Lip-norms over C*-algebras. 
\end{abstract}

\maketitle

\newcommand{\physics}[1]{#1}



\section{Introduction}

Noncommutative geometry is founded on the allegory that noncommutative algebras should be studied as generalizations of algebras of functions over spaces endowed with some form of geometry, and finds its roots in the work of Connes \cite{Connes80,Connes}. Noncommutative metric geometry, in particular, is the study of noncommutative generalizations of algebras of Lipschitz functions over metric spaces \cite{Connes89,Rieffel99,Weaver99}. A fascinating consequence of the introduction of noncommutative metric data on C*-algebras, encoded in special seminorms called Lip-norms, is the possibility to define topologies on the class of all quantum metric spaces. In turn, such topologies establish a framework for new forms of approximations and the development of new techniques inspired, in large part, by the mathematical physics literature. Moreover, this new framework allows for generalizations of techniques from metric geometry \cite{Gromov} to the realm of quantum spaces. A noncommutative generalization of the flexible theory of geometry on metric spaces may indeed be well-suited to the pathologies of quantum spaces. 

We introduce in this paper the quantum Gromov-Haus\-dorff pro\-pin\-qui\-ty, a new distance between quantum compact metric spaces, which extends the Gro\-mov-Haus\-dorff distance to noncommutative geometry and strengthens Rieffel's quantum Gro\-mov-Haus\-dorff distance and Rieffel's proximity by making *-iso\-mor\-phism a necessary condition for distance zero, while being well adapted to Leibniz seminorms. Our new metric thus meets several challenges raised by the current research in noncommutative metric geometry \cite{Rieffel09,Rieffel10,Rieffel10c,Rieffel11,Rieffel12}. Indeed, the original notion of convergence for {\qcms s} was introduced in \cite{Rieffel00} by Rieffel, who constructed the quantum Gro\-mov-Haus\-dorff distance on the class of all compact quantum metric spaces. This distance, informally, did not see the multiplicative structure of the underlying C*-algebras.

Now, once convergence of some sequence of {\qcms s} has been established for the quantum Gro\-mov-Haus\-dorff distance, it then becomes natural to investigate the behavior of sequences of associated structures, such as noncommutative vector bundles (i.e. finitely generated projective modules). However, recent research on this topic \cite{Rieffel09,Rieffel10,Rieffel10c,Rieffel11,Rieffel12} reveals that both the C*-algebraic structure of the underlying space defining a {\qcms s}, as well as the Leibniz property for the Lip-norm, play an important role. We shall call a {\qcms} defined as pairs of a C*-algebra and a Leibniz Lip-norm a \emph{\Lqcms} in this paper, and we note that the class $\LCQMS$ of {\Lqcms s} includes, for instance, the class of compact C*-metric spaces introduced by Rieffel. 

By construction, the computation of the quantum Gro\-mov-Haus\-dorff distance, even between two {\Lqcms s}, involve {\qcms s} which are not Leibniz. This observation raises the challenge of defining a metric on $\LCQMS$ whose computation only involves spaces in $\LCQMS$, and which extend, in some sense, the Gro\-mov-Haus\-dorff distance, while dominating Rieffel's quantum Gro\-mov-Haus\-dorff distance --- as the latter has a clear interpretation in terms of states of the quantum system described by the {\qcms s}. Moreover, we consider that such a distance should remember the C*-algebraic distance, which should mean at least that distance zero only occurs between *-isomorphic C*-algebras. As mentioned in \cite{Rieffel10,Rieffel10c}, for instance, the construction of such a metric proved elusive for some time. In \cite{Rieffel10c}, Rieffel introduced a pseudo-semi metric, called the quantum proximity, as a step toward answering this challenge. Yet the quantum proximity is unlikely to satisfy the triangle inequality.

We demonstrate that the quantum Gro\-mov-Haus\-dorff propinquity which we introduce in this paper indeed dominates Rieffel's quantum Gro\-mov-Haus\-dorff distance, is dominated by the Gro\-mov-Haus\-dorff distance when restricted to the class of classical metric spaces, and can be seen as a well-behaved form of Rieffel's proximity, from which it takes its name. Moreover, the quantum propinquity possesses the desired coincidence property. Thus, we propose our new metric as a potential answer to the challenge of extending the Gro\-mov-Haus\-dorff distance to the class $\LCQMS$ of {\Lqcms s}. We then note that, for our quantum propinquity, the quantum tori form a continuous family, and can be approximated by finite dimensional C*-algebras, following our work in \cite{Latremoliere05} and a remark in \cite{li03}.


A \emph{compact quantum metric space} $(\A,\Lip)$, as defined by Rieffel \cite{Rieffel98a,Rieffel99} is a pair of an order unit-space $\A$ and a densely defined seminorm on $\A$, called a Lip-norm,  whose dual seminorm induces a metric on the state space $\StateSpace(\A)$ of $\A$ whose topology is the weak* topology. The classical picture is given by a pair $(C_\R(X),\mathsf{Lip})$ of the algebra of $\R$-valued continuous functions on a compact metric space $(X,\mathsf{d})$ equipped with the Lipschitz seminorm $\mathsf{Lip}$ associated with the metric $\mathsf{d}$. Noncommutative examples include the (self-adjoint part of) quantum tori and other quantum homogeneous spaces \cite{Rieffel98a}, hyperbolic group C*-algebras \cite{Ozawa05,Rieffel02}, Connes-Landi-Dubois-Violette quantum spheres \cite{Landi01,violette02,Li05}, among others. We generalized this structure to {\lcqms s} in \cite{Latremoliere12b}.

Rieffel introduces his \emph{quantum Gro\-mov-Haus\-dorff distance} in \cite{Rieffel00}, as a generalization of the Gro\-mov-Haus\-dorff distance \cite{Gromov81,Gromov} to compact quantum metric spaces. He showed, as a first example, that quantum tori formed a continuous family for this metric. The construction of the quantum Gro\-mov-Haus\-dorff distance proceeds as follows: given two compact quantum metric spaces $(\A,\Lip_\A)$ and $(\B,\Lip_\B)$, we consider the set $\Adm(\Lip_\A,\Lip_\B)$ of all Lip-norms on $\A\oplus\B$ whose quotients to $\A$ and $\B$ are, respectively, $\Lip_\A$ and $\Lip_\B$. For any such admissible Lip-norm $\Lip$, one may consider the Haus\-dorff distance between $\StateSpace(\A)$ and $\StateSpace(\B)$ identified with their isometric copies in $\StateSpace(\A\oplus\B)$, where the state spaces $\StateSpace(\A)$, $\StateSpace(\B)$ and $\StateSpace(\A\oplus\B)$ are equipped with the metrics dual to, respectively, $\Lip_\A$, $\Lip_\B$ and $\Lip$. The infimum of these Haus\-dorff distances over all possible choices of $\Lip \in \Adm(\Lip_\A,\Lip_\B)$ is the quantum Gro\-mov-Haus\-dorff distance between $(\A,\Lip_\A)$ and $(\B,\Lip_\B)$.

As an example of an interesting application of this distance, we proved in \cite{Latremoliere05} that we can also use Rieffel's metric to approximate quantum tori by finite dimensional C*-algebras. Our work was motivated by such papers in mathematical physics as \cite{Connes97,Seiberg99,tHooft02,Zumino98}, where such finite approximations of quantum tori surfaced without any formal framework. Of course, quantum tori are not AF algebras --- rather, the closest form of approximation of this nature, due to Elliott and Evans, is that irrational rotation algebras, i.e. two dimensional simple quantum tori, are AT algebras, or inductive limits of circle algebras \cite{Elliott93b}. Thus our work was one of the first construction to provide some evidence that the intuition often encountered in mathematical physics could be formalized. Other examples of convergence results for Rieffel's metric were established regarding Dubois-Violette quantum spheres \cite{Li06} and finite dimensional approximations of coadjoint orbits of compact Lie groups \cite{Rieffel01,Rieffel10c}.

\physics{More generally, the study of noncommutative space-times finds its roots in a paradox which emerges from attempting to bring quantum physics and general relativity together: informally, the uncertainty principle of quantum mechanics suggests that measurements at very small scale require very large momenta, and in turn, Einstein's field equation implies that large localized momenta would curve space-time, so that measurement of an arbitrary small volume would actually create a small black hole whose event horizon would hide the volume one attempts to measure. A discussion of this paradox and a first, simple noncommutative space-time model to address it, was devised in  \cite{Doplicher95}. In essence, a noncommutative space-time is a structure where space-time measurements are themselves subject to uncertainty, thus preventing arbitrary precision and offering a way out of the stated paradox --- while reflecting the fact that the quantum uncertainty between momentum and position paired with the relativistic connection between momentum and geometry would naturally lead one to postulate an uncertainty principle at the level of geometry. In a different direction, Connes introduced a noncommutative standard model, which involves metric considerations as well \cite{Connes}.

On the other hand, Edwards \cite{Edwards75} introduced what is now known as the Gro\-mov-Haus\-dorff distance for compact metric spaces, based upon consideration ab\-out quantum gravity, suggesting that one would have to work, not on one space-time, but rather on a ``super-space'' of possible space-times, metrized by a generalized Haus\-dorff distance, thus allowing fluctuations in the gravitational field. Edwards was following upon the suggestion to study quantum gravity by means of a superspace by Wheeler \cite{Wheeler68}. It should be noted that Gromov introduces his metric \cite{Gromov81} for more general locally compact metric spaces, and used his metric toward major advances in group theory. 

Thus, noncommutative metric geometry finds some of its roots in mathematical physics, and may well play a role in noncommutative theories of space-time. On the other hand, the quantum Gro\-mov-Haus\-dorff distance opens a new realm of mathematical possibilities: in general, the drought of sub-objects in the category of quantum spaces (i.e. of quotients in the category of C*-algebras, where some of the most studied examples are simple C*-algebras, such as the irrational rotation algebras) makes the study of quantum spaces difficult; yet the quantum Gro\-mov-Haus\-dorff topology allows for approximations of quantum spaces ``from the outside'' by other spaces, and opens new avenues for the study of C*-algebras. In the spirit of noncommutative geometry, this distance also opens the possibility to bring into the noncommutative realm the techniques of metric Geometry \cite{Gromov}.}

When introducing the quantum Gro\-mov-Haus\-dorff distance, the choice to forget the multiplicative structure (or at least, some of it, as the Jordan products can still be recovered \cite{Alfsen01}) by working with order-unit spaces rather than C*-algebras, allowed for very useful constructions on which a lot of convergence results mentioned above relied, including our own \cite{Latremoliere05}. Yet, the price for this choice is that Rieffel's distance may be null between two non-isomorphic C*-algebras, as long as they are both isomorphic as order-unit spaces and their quantum metric space structures are isometric in a well-defined sense \cite{Rieffel00}. This weakened form of the desirable coincidence property has sparked quite some efforts geared toward strengthening Rieffel's metric \cite{Kerr02,Kerr09,li03,Li06}. In all of these approaches, the principle is to replace the state space with an object more connected to the C*-algebraic structure, such as unital matrix-valued completely positive maps \cite{Kerr02,Kerr09}, or the restriction of the multiplication graph to the Lipschitz ball \cite{li03}. In general, new techniques need to be devised to show that sequences which converge in the quantum Gro\-mov-Haus\-dorff distance converge in these metrics, as they are stronger. However, it is not always clear what these techniques may be, as discussed for instance at the end of \cite[Section 2]{Rieffel09}. It was also not immediately apparent which construction may fit the future development of the theory best.

As a parallel development within the field, the importance of the C*-algebraic structure became evident as the field of noncommutative metric geometry grew. Rieffel's work on convergence for vector bundles \cite{Rieffel10}, and our own generalization of the notion of quantum metric spaces to the {\lcqms s} setup \cite{Latremoliere05b,Latremoliere12b}, are two such pieces of evidence. Concretely, the requirement that Lip-norms satisfy a strong form of the Leibniz identity has emerged and generated a large effort to recast previous convergence results for the quantum Gro\-mov-Haus\-dorff distance within this more stringent framework. Rieffel introduced the class of \emph{compact C*-metric spaces} $(\A,\Lip)$, as pairs of a unital C*-algebra $\A$ and a densely defined seminorm on $\A$ whose restriction to the self-adjoint part $\sa{\A}$ of $\A$ is a Lip-norm, and which satisfies, among various technical requirements, that for all $a,b\in \A$:
\begin{equation}\label{intro-leibniz-eq}
\Lip(ab) \leq \|a\|_\A\Lip(b) + \|b\|_\A\Lip(b)
\end{equation}
while, if $a\in\A$ is invertible:
\begin{equation}\label{intro-strong-leibniz-eq}
\Lip\left(a^{-1}\right) \leq \left\|a^{-1}\right\|_\A^2\Lip(a)\text{,}
\end{equation}
where $\|\cdot\|_\A$ is the norm of $\A$. The property described by Equation (\ref{intro-leibniz-eq}) is the \emph{Leibniz condition}, whereas together, Equations (\ref{intro-leibniz-eq}) and (\ref{intro-strong-leibniz-eq}) define what Rieffel calls the \emph{strong Leibniz condition} \cite{Rieffel10c}. These additional requirements define a natural class of C*-metric spaces, and by keeping the multiplicative structure encoded, they provide a natural framework for noncommutative metric geometry. Much effort has since been devoted to understanding Leibniz seminorms and their metric implications (for instance, \cite{Rieffel10,Rieffel10c,Rieffel11,Rieffel12}). We expect that some form of the Leibniz property will play a role in the theory of {\lcqms s}, which started our own investigations in these matters, resulting in this paper.

There are two fascinating difficulties with this new approach. First, most original proofs of convergence for the quantum Gro\-mov-Haus\-dorff distance involved, at some point, the use of a Lip-norm which failed to be a strong Leibniz seminorm, or even a Leibniz seminorm --- in fact, in the case of quantum tori, much work is done on order-unit spaces which are not the full self-adjoint part of a C*-algebra. Indeed, to establish the continuity of the family of quantum tori for the quantum Gro\-mov-Haus\-dorff distance, a particular construction, involving continuous fields of states, leads to a Lip-norm which fails to be Leibniz \cite[Proposition 11.1]{Rieffel00}. These fields are constructed to deal with the varying norms between different quantum tori. The same construction played a similar role in our own work on finite dimensional approximations of quantum tori \cite{Latremoliere05}. Thus, if one wishes to work within the context of C*-algebras and (strong) Leibniz seminorms, then some very non-trivial changes must be made. An example of the difficulties involved can be found in \cite{Rieffel10c}, which replaces all non-Leibniz Lip-norms in \cite{Rieffel01} involved in the proof of convergence of matrix algebras to co-adjoint orbits of Lie groups for the quantum Gro\-mov-Haus\-dorff distance, with strong Leibniz Lip-norms built using bimodules.


A second, deeper difficulty, regards the proper theoretical framework to work with C*-metric spaces, or more generally, with quantum compact metric spaces whose Lip-norm possess a form of the Leibniz property. A natural idea could be to require, in the definition of the quantum Gro\-mov-Haus\-dorff distance which we discussed above, that all admissible Lip-norms should be strong Leibniz themselves, and proceed with the construction without changes. Such an approach led Rieffel to define the \emph{proximity} between compact C*-metric spaces in \cite{Rieffel10c}. However, this quantity is not proven to be metric, and in fact, is unlikely to be so. The main difficulty is to prove the triangle inequality. Consequently, even if one proves convergence of a sequence of compact C*-metric spaces to some limit for Rieffel's proximity, it is unclear whether the sequence is Cauchy, and this leads to intriguing questions \cite{Rieffel09}. This is not the first time such problems occur: Kerr already faced this problem in \cite{Kerr02}. The main problem can be summarized in the fact that the quotient of a Leibniz seminorm is not always a Leibniz seminorm, as was already known in \cite{blackadar91}. This pathology in the behavior of Leibniz seminorms makes it extremely difficult to find out a proper metric between compact C*-metric spaces. Some past suggestions \cite{li03} could prove useful, but working with them has proven quite difficult as well (see \cite[Section 2]{Rieffel09}). The problem, it would seem, is that there is a gap between the sort of Leibniz Lip-norms one constructs in practice, and any available metric on quantum compact metric spaces.

We offer a solution to this second problem in the present paper. We propose a new distance, the quantum Gro\-mov-Haus\-dorff propinquity, which we intend to play the role for which Rieffel's proximity was introduced, with three very remarkable benefits. First, our quantum propinquity satisfies the triangle inequality by construction. Second, it is an actual distance on compact C*-metric spaces: distance zero implies *-isomorphism and, of course, isometry of the quantum metric space structures. Third, our quantum propinquity is constructed to take full advantage of the methods developed to work with strong Leibniz Lip-norms. To be a little more specific, given two compact quantum metric spaces $(\A,\Lip_\A)$ and $(\B,\Lip_\B)$, the construction of an admissible Lip-norm $\Lip$ for $(\Lip_\A,\Lip_\B)$, since the beginning of the research in this field \cite{Rieffel00}, typically involves finding a seminorm $\mathsf{N}$ on $\A\oplus\B$, called a bridge \cite[Section 5]{Rieffel00}, with sufficient properties so that:
\begin{equation*}
(a,b) \in \A\oplus\B \longmapsto \max\{ \Lip_\A(a),\Lip_\B(b), \mathsf{N}(a,b) \}
\end{equation*}
is an admissible Lip-norm. When $(\A,\Lip_\A)$ and $(\B,\Lip_\B)$ are compact C*-metric spaces, it is natural to make the same sort of construction, looking for bridges which satisfy the proper version of the strong Leibniz property. Rieffel's original proposition \cite{Rieffel10c} is to choose an $\A$-$\B$-bimodule $\Omega$, with the actions of $\A$ and $\B$ non-degenerate, an element $\omega \in \Omega$, and a bimodule norm $\|\cdot\|_\Omega$ on $\Omega$  (i.e. for all $a\in\A$,$b\in\B$ and $\eta\in\Omega$ we have $\|a\eta b\|_\Omega\leq \|a\|_\A\|\eta\|_\Omega\|b\|_\B$). Then one could try to build a bridge of the form:
\begin{equation*}
\mathsf{N}_{\Omega,\omega} : (a,b) \in \A\oplus\B \longmapsto\gamma \|a\omega - \omega b\|_\Omega
\end{equation*}
for some strictly positive scalar $\gamma$.

While very natural, this idea requires, in order to work, a non-trivial choice of bimodule, and to relate what being small in the sense of the seminorm $\mathsf{N}_{\Omega,\omega}$ implies on the distance between state spaces. Both of these issues can present major challenges.

One may prefer to replace general bimodules in the above construction with C*-algebras, both for technical practicality and for the elegance of working only within the category of C*-algebras. This is particularly sensible if one would wish to preserve the multiplicative structure of compact C*-metric spaces through convergence, in some sense. It is possible to do so in some known examples \cite{Rieffel09}, and again, this presents interesting problems.

In our present paper, we use the idea of the construction of bridges based on bimodules which are themselves C*-algebras as our starting point. Thus, our quantum propinquity is very much motivated, not only by the theoretical need to provide a consistent framework for metric noncommutative geometry of compact C*-metric spaces, but also by the practical matter of being applicable to known and future examples. 

The quantum propinquity shares with Li's nuclear distance \cite{li03} the idea to use the Haus\-dorff distance between images for *-mono\-morphisms of the unit balls of the Lip-norms of compact quantum metric spaces, which is akin to using bridges built from bimodules which are C*-algebras. However, it is unclear how our metric and Li's nuclear distance compare. A natural modification of Li's nuclear distance, the unital nuclear distance introduced in \cite{Kerr09}, does dominate the quantum propinquity (and Li's original nuclear distance). Compared with the unital nuclear distance, we introduce a pivot element and use it to define an inner-derivation which we include in the computation of the Haus\-dorff distance between Lip-balls. This pivot element allows us to replace approximations by order-unit spaces used for the quantum Gro\-mov-Haus\-dorff distance, by approximations involving products with the pivot element. Kerr and Li's unital nuclear distance would consist in always taking our pivot element to be the identity. 

It is less clear how to compare our quantum propinquity to Li's C*-algebraic distance. Our quantum propinquity is designed to work well for Leibniz Lip-norms, which do not play any special role for the C*-algebraic distance, and our distance works entirely within the category of C*-algebras. As our goal is to study how various structures over C*-algebras behave under metric convergence, it seems important for our goal to avoid involving more general Banach spaces in our theory, as was done for instance with the C*-algebraic distance. It is an interesting question to determine what relations, if any, our two metrics have.

In a different direction, Kerr's matricial distance \cite{Kerr02} involves complete positive, matrix-valued maps instead of states. Our construction in this paper could be generalized very easily to construct a matricial propinquity, by replacing the state space by completely positive matrix valued maps as well. The resulting matricial propinquity would dominate Kerr's distance for the same reason as our current metric dominates the quantum Gro\-mov-Haus\-dorff distance. Thus, while we do not require complete positivity in our work, and Kerr's original motivation seems to have been to address the distance zero problem with the quantum Gro\-mov-Haus\-dorff distance, which our quantum propinquity solves in its own way, it may well be that a matricial version of the quantum propinquity could find applications in later development of the field. We also point out that our approach for the distance zero property of the quantum propinquity is more in the spirit of Kerr's method than Rieffel's original work on the quantum Gro\-mov-Haus\-dorff distance.

Another approach to a noncommutative generalization of Lipschitz algebras is found in the work of Wei Wu \cite{Wu05,Wu06a,Wu06b}, where a quantized metric space is a matricial order unit space $\mathcal{V}$ endowed with a family $(\Lip_n)_{n\in\N}$ of Lip-norms, each defined on $\mathcal{V}\otimes M_n$ where $M_n$ is the algebra of $n\times n$ matrices, and satisfying natural compatibility conditions with the underlying operator space structure. Wu showed in \cite{Wu06a} how to construct a quantized Gromov-Hausdorff distance on the class of quantized metric spaces, and how, for instance, convergence of matrix algebras to the sphere \cite{Rieffel01} can be recast in this framework. Two quantized metric spaces are at distance zero from each other for the quantized Gromov-Hausdorff distance if and only if there exists a complete isometry between them. Once again, this suggests an interesting generalization of the quantum propinquity to a matricial, or quantized version, where one works with a sequence of Lip-norms on matricial algebras associated with {\Lqcms s}. The quantized Gromov-Hausdorff distance dominates the quantum Gromov-Hausdorff distance \cite[Proposition 4.9]{Wu06a}, but a direct comparison between our metric and the quantized Gromov-Hausdorff metric may prove challenging --- though a quantized version of our quantum propinquity would be expected to dominate Wu's distance for a similar reason as the quantum propinquity dominates Rieffel's distance.

Thus, in summary, this paper proposes a new distance, called the quantum Gro\-mov-Haus\-dorff propinquity, which is designed as a natural distance to work with compact C*-metric spaces, and dominates Rieffel's quantum Gro\-mov-\-Haus\-dorff distance. We hope that our work presents a solution to the long standing problem of working with Leibniz seminorms in the context of noncommutative metric geometry, while remaining practical. Our paper is organized in five sections in addition to this introduction. In a first section, we introduce the category of {\Lqcms s}, in which we shall work, and which is a subcategory of Rieffel's compact quantum metric spaces where all the underlying order-unit spaces are in fact the self-adjoint part of unital C*-algebras (rather than some subspace of it) and all the Lip-norms satisfy a Leibniz inequality and are lower semi-cont\-inuous. We then introduce the notions of bridges and treks upon which our distance is constructed. We define afterward our new distance. The proof of the coincidence property is the central piece of the next section. We conclude by showing that the quantum propinquity dominates the quantum Gro\-mov-Haus\-dorff distance, thus providing a natural interpretation to our distance, and by showing that the Gro\-mov-Haus\-dorff distance dominates our quantum propinquity restricted to classical compact metric spaces. The example of convergences of fuzzy and quantum tori is presented in this last section.


We wish to thank M. Rieffel for his comments on an earlier version of this paper.


\section{\Lqcms s}

At the root of our work is a pair consisting of a C*-algebra and a seminorm enjoying various properties. We start with the most minimal assumptions on such pairs and introduce the context of our paper.

\begin{notation}
We denote the self-adjoint part of any C*-algebra $\A$ by $\sa{\A}$, and the unit of a unital C*-algebra $\A$ by $\unit_\A$. The norm of any space $V$ is denoted by $\|\cdot\|_V$ unless otherwise specified. 
\end{notation}

\begin{definition}\label{Lipschitz-pair-def}
A \emph{unital Lipschitz pair} $(\A,\Lip)$ is a unital C*-algebra $\A$ and a seminorm $\Lip$ defined on a dense subspace $\dom{\Lip}$ of $\sa{\A}$ such that:
\begin{equation*}
\left\{ a \in \dom{\Lip} : \Lip(a) = 0 \right\} = \R \unit_{\A}\text{.}
\end{equation*}
\end{definition}

It will often be convenient to adopt the following convention regarding Lipschitz pairs:

\begin{convention}\label{dom-convention}
We denote the domain of a densely defined seminorm $\Lip$ on $\sa{\A}$ by $\dom{\Lip}$. We will usually regard $\Lip$ as a \emph{generalized} seminorm on the whole of $\sa{\A}$ by setting $\Lip(a) = \infty$ for $a\not\in\dom{\Lip}$. We shall adopt this convention whenever convenient without further mention. Moreover, we adopt the measure-theoretic convention regarding computations with $\infty$, namely: $r\infty = \infty r = \infty$ for all $r\in(0,\infty]$ while $0\infty = \infty 0 = 0$, and $r + \infty = \infty + r = \infty$ for all $r \in \R$. With this convention, we have:
\begin{equation*}
\dom{\Lip} = \left\{ a \in \sa{\A} : \Lip(a) < \infty \right\}\text{.}
\end{equation*}
\end{convention}

At the core of noncommutative metric geometry are the quantum metric spaces, defined in \cite{Rieffel98a,Rieffel99} by Rieffel in the compact case, and generalized by the present author to the locally compact setup in \cite{Latremoliere12b}. In either situation, the metric data encoded by a Lipschitz pair is reflected by the following distance on the state space of the underlying C*-algebra:

\begin{notation}
The \emph{state space} of a C*-algebra $\A$ is denoted by $\StateSpace(\A)$.
\end{notation}

\begin{definition}\label{mongekant-def}
The \emph{\mongekant} associated with a unital Lipschitz pair $(\A,\Lip)$ is the distance $\Kantorovich{\Lip}$ defined on the state space $\StateSpace(\A)$ of $\A$ as:
\begin{equation*}
\Kantorovich{\Lip} : \varphi,\psi \in \StateSpace(\A) \longmapsto \sup \{ |\varphi(a) - \psi(a) | : a\in\sa{\A}\text{ and }\Lip(a) \leq 1 \} \text{.}
\end{equation*} 
\end{definition}

Definition (\ref{Lipschitz-pair-def}) of a unital Lipschitz pair is designed precisely so that the associated {\mongekant} is indeed an extended metric, as one can easily check: it satisfies all axioms of a metric, except that it may take the value $\infty$. This extended metric has a long history and many names, and could fairly be called the Monge - Kantorovich - Rubinstein - Wasserstein - Dobrushin - Connes- Rieffel metric. It finds its roots in the transportation problem introduced in 1781 by Monge, which was later studied by Kantorovich, who introduced this metric \cite{Kantorovich40} in 1940. The form we use was first given for classical metric spaces in 1958 by Kantorovich and Rubinstein \cite{Kantorovich58}. In the context of probability theory, Wasserstein later re-introduced this same metric in 1969 \cite{Wasserstein69}. Dobrushin \cite{Dobrushin70} named this metric after Wasserstein, and also studied the properties of this metric in the noncompact setting, which was fundamental to our work in \cite{Latremoliere12b} on quantum locally compact metric spaces. Connes suggested in \cite{Connes89,Connes} to use Definition (\ref{mongekant-def}) to study metric properties in noncommutative geometry, when $\Lip$ is the seminorm associated with a spectral triple. In \cite{Rieffel98a}, Rieffel expanded on Connes' idea by introducing the more general framework within which this paper is written. Our choice of terminology is somewhat arbitrary, based loosely on the order of appearance of this metric, and in line with Rieffel's terminology.

The fundamental idea of Rieffel \cite{Rieffel98a,Rieffel99} is to match the topology induced by the {\mongekant} to the natural topology on the state space:

\begin{definition}
A \emph{\qcms $(\A,\Lip)$} is a unital Lipschitz pair such that the associated {\mongekant} $\Kantorovich{\Lip}$ induces the weak* topology on the state space $\StateSpace(\A)$ of $\A$. If $(\A,\Lip)$ is a quantum compact metric space, then $\Lip$ is called a \emph{Lip-norm} on $\sa{\A}$.
\end{definition}

\begin{remark}\label{a-remark}
Rieffel's \emph{compact quantum metric spaces} are pairs $(\A,\Lip)$ where $\A$ is an order unit space, and $\Lip$ is a densely defined seminorm on $\A$ which vanishes exactly on the scalar multiple of the order unit of $\A$, such that the {\mongekant} defined by Definition (\ref{mongekant-def}) metrizes the weak* topology of $\StateSpace(\A)$ \cite{Rieffel98a,Rieffel99}, where $\StateSpace(\A)$ is meant for the state space of $\A$ as an order unit space \cite{Alfsen01}. We shall only work in this paper in the context where the order-unit spaces are, in fact, the set of all self-adjoints elements of a unital C*-algebra --- in fact we are particularly interested in the multiplicative structure and its relation to noncommutative metric geometry. As our definition is a special case of Rieffel's, we employ a slightly different terminology, where we inverse the order of the words \emph{quantum} and \emph{compact}, in line with our own terminology in \cite{Latremoliere12b}. \textbf{Thus we emphasize that in this paper, {\qcms s} are always a pair of a $C^\ast$-algebra and a Lip-norm on it}, not more general C*-normed algebras or order unit spaces.
\end{remark}

\begin{remark}
If $(\A,\Lip)$ is a {\qcms}, then the weak* topology of $\StateSpace(\A)$ is metrizable, and since $\StateSpace(\A)$ is weak* compact, $\StateSpace(\A)$ is separable in the weak* topology, which in turn implies that the topological dual of $\A$ is separable by the Hahn-Jordan decomposition. Thus, one easily checks that $\A$ is separable in norm.
\end{remark}

The inspiration for Rieffel's definition is found within the classical picture given by classical compact metric spaces:

\begin{example}\label{classical-ex}
Let $(X,\mathsf{d})$ be a compact metric space. For any function $f : X\rightarrow \R$, we define the \emph{Lipschitz constant of $f$} as:
\begin{equation*}
\Lip(f) = \sup\left\{ \frac{|f(x)-f(y)|}{\mathsf{d}(x,y)} : x,y \in X, x\not= y \right\}\text{,}
\end{equation*}
which in general may be infinite. Of course, if $f:X\rightarrow \R$ is a Lipschitz function, i.e. if $f$ has finite Lipschitz constant, then $f \in \sa{C(X)}$ where $C(X)$ is the C*-algebra of continuous functions from $X$ to $\C$. It is immediate that a function has zero Lipschitz constant if and only if it is constant on $X$. Moreover, the set of Lipschitz functions is norm-dense in $\sa{C(X)}$ by the Stone-{Weierstra\ss} Theorem, and thus $(C(X),\Lip)$ is a unital Lipschitz pair.

As a prelude of the rest of this section, we also note that Arz{\'e}la-Ascoli's Theorem shows that the set:
\begin{equation*}
\{ f \in \sa{C(X)} : f(x_0)=0 \text{ and }\Lip(f) \leq 1 \}
\end{equation*}
is norm compact, for any $x_0\in X$. In turn this property implies that $\Kantorovich{\Lip}$ metrizes the weak* topology on the space $\StateSpace(C(X))$ of Radon probability measures on $X$ (see for instance \cite{Dudley}). Thus $(C(X),\Lip)$ is a quantum compact metric space.
\end{example}

A form of Arz{\'e}la-Ascoli theorem serves as the main characterization of {\qcms s}. It first appeared in \cite{Rieffel98a}, at the very root of this theory of quantum metric spaces. An equivalent formulation of this result appeared in \cite{Ozawa05}, which we then generalized to the locally compact setting in \cite[Theorem 3.10]{Latremoliere12b}. We summarize these characterizations of {\qcms s} as follows:

\begin{theorem}\cite{Rieffel98a, Ozawa05}\label{az-thm}
Let $(\A,\Lip)$ be a unital Lipschitz pair. The following are equivalent:
\begin{enumerate}
\item $(\A,\Lip)$ is a {\qcms}.
\item There exists $\varphi \in\StateSpace(\A)$ such that the set:
\begin{equation*}
\mulip(\A,\Lip,\varphi) = \{ a \in \sa{\A} : \varphi(a) = 0 \text{ and } \Lip(a)\leq 1 \}
\end{equation*}
is totally bounded for the norm $\|\cdot\|_\A$ of $\A$.
\item For all $\varphi \in\StateSpace(\A)$, the set:
\begin{equation*}
\mulip(\A,\Lip,\varphi) = \{ a \in \sa{\A} : \varphi(a) = 0 \text{ and } \Lip(a)\leq 1 \}
\end{equation*}
is totally bounded for the norm $\|\cdot\|_\A$ of $\A$.
\item There exists $r \geq 0$ such that the set:
\begin{equation*}
\left\{ a \in \sa{\A} : \Lip(a)\leq 1 \text{ and }\|a\|_\A \leq r \right\}
\end{equation*}
is totally bounded for the norm $\|\cdot\|_\A$ of $\A$, and the diameter of $\left(\StateSpace(\A),\Kantorovich{\Lip}\right)$ is less or equal to $r$.
\end{enumerate}
\end{theorem}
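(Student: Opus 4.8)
The plan is to establish the cycle of implications $(1)\Rightarrow(4)\Rightarrow(3)\Rightarrow(2)\Rightarrow(1)$, using two classical ingredients: the norming property of the state space of a unital C*-algebra, namely $\|a\|_\A = \sup\{|\varphi(a)| : \varphi\in\StateSpace(\A)\}$ for every $a\in\sa{\A}$, and the Arz{\'e}la-Ascoli theorem applied on the weak*-compact space $\StateSpace(\A)$. Before that, I would record a few soft facts valid for any unital Lipschitz pair. The evaluation map $a\in\sa{\A}\mapsto\widehat{a}$, with $\widehat{a}(\varphi)=\varphi(a)$, is a linear isometry of $\sa{\A}$ into $C(\StateSpace(\A))$ equipped with the supremum norm, precisely by the norming property. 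From the very definition of $\Kantorovich{\Lip}$ one has $|\varphi(a)-\psi(a)| \leq \Lip(a)\,\Kantorovich{\Lip}(\varphi,\psi)$ for all states $\varphi,\psi$; hence, if $\Lip(a)\leq 1$ then $\widehat{a}$ is $1$-Lipschitz on $(\StateSpace(\A),\Kantorovich{\Lip})$, and, conversely, this same inequality shows that every $\Kantorovich{\Lip}$-convergent net of states converges pointwise on $\dom{\Lip}$, hence, by density of $\dom{\Lip}$ in $\sa{\A}$ and contractivity of states, in the weak* topology. In particular the $\Kantorovich{\Lip}$-topology on $\StateSpace(\A)$ is always finer than or equal to the weak* topology, and $\Kantorovich{\Lip}$ always separates states, so it is a genuine (extended) metric.

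For $(1)\Rightarrow(4)$: if $\Kantorovich{\Lip}$ metrizes the weak* topology, then $(\StateSpace(\A),\Kantorovich{\Lip})$ is compact, hence of finite diameter $r$. The image under $a\mapsto\widehat{a}$ of the set $L_r := \{a\in\sa{\A} : \Lip(a)\leq 1,\ \|a\|_\A\leq r\}$ is a uniformly bounded family of $1$-Lipschitz, hence equicontinuous, functions on the compact metric space $(\StateSpace(\A),\Kantorovich{\Lip})$, so it is totally bounded in $C(\StateSpace(\A))$ by Arz{\'e}la-Ascoli; since $a\mapsto\widehat{a}$ is an isometry, $L_r$ is totally bounded, and the diameter requirement holds by the choice of $r$. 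For $(4)\Rightarrow(3)$: fix $\varphi\in\StateSpace(\A)$, and let $a\in\sa{\A}$ satisfy $\Lip(a)\leq 1$ and $\varphi(a)=0$; then $\|a\|_\A = \sup_{\psi}|\psi(a)-\varphi(a)| \leq \sup_{\psi}\Kantorovich{\Lip}(\psi,\varphi) \leq r$, so $\mulip(\A,\Lip,\varphi)\subseteq L_r$ and is therefore totally bounded. The implication $(3)\Rightarrow(2)$ is immediate.

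The remaining implication $(2)\Rightarrow(1)$ is where the real work lies, and I expect its equicontinuity argument to be the main obstacle. First, total boundedness of $\mulip(\A,\Lip,\varphi)$ makes it norm-bounded, say by $r$; since, for any $a$ with $\Lip(a)\leq 1$, the element $a-\varphi(a)\unit_\A$ lies in $\mulip(\A,\Lip,\varphi)$ and leaves both $\Lip(a)$ and each difference $\psi(a)-\psi'(a)$ unchanged, one gets $\Kantorovich{\Lip}(\psi,\psi')\leq 2r$ for all states, so $\Kantorovich{\Lip}$ is finite-valued. The crux is to show that the weak* topology is finer than the $\Kantorovich{\Lip}$-topology: given a weak*-convergent net $\psi_\lambda\to\psi$ and $\varepsilon>0$, cover the totally bounded set $\mulip(\A,\Lip,\varphi)$ by finitely many norm balls of radius $\frac{\varepsilon}{3}$ centered at $a_1,\dots,a_k$; eventually $|\psi_\lambda(a_i)-\psi(a_i)|<\frac{\varepsilon}{3}$ for all $i$, and, since the supremum defining $\Kantorovich{\Lip}(\psi_\lambda,\psi)$ may be restricted to $\mulip(\A,\Lip,\varphi)$ by the scalar-translation remark above, the triangle inequality then gives $\Kantorovich{\Lip}(\psi_\lambda,\psi)\leq\varepsilon$ eventually. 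Thus the identity map from $(\StateSpace(\A),\text{weak}^\ast)$ to $(\StateSpace(\A),\Kantorovich{\Lip})$ is a continuous bijection from a compact space to a Hausdorff space, hence a homeomorphism, so $\Kantorovich{\Lip}$ metrizes the weak* topology, which is $(1)$. Everything outside this last equicontinuity argument is bookkeeping around the norming property and Arz{\'e}la-Ascoli.
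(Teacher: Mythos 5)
Your proof is correct, and since the paper states this theorem as a citation to \cite{Rieffel98a, Ozawa05} without giving its own proof, there is nothing internal to compare it against; your cycle $(1)\Rightarrow(4)\Rightarrow(3)\Rightarrow(2)\Rightarrow(1)$ is essentially the standard argument of those references, resting on the isometric embedding $a\mapsto\widehat{a}$ of $\sa{\A}$ into $C(\StateSpace(\A))$ together with Arz{\'e}la-Ascoli for the forward direction, and on the translation trick $a\mapsto a-\varphi(a)\unit_\A$ plus the finite $\frac{\varepsilon}{3}$-net and the compact-to-Hausdorff continuous bijection argument for $(2)\Rightarrow(1)$. The preliminary observations you use (that $\Kantorovich{\Lip}$ separates states by density of $\dom{\Lip}$, and that its topology is always at least as fine as the weak* topology) are exactly the points that make the last step legitimate, so no gap remains.
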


A class of examples which will prove important to us in this paper is given by quantum homogeneous spaces, as was established by Rieffel at the very start of the study of compact quantum metric spaces:

\begin{example}\label{q-tori-ex}\cite{Rieffel98a}
Let $G$ be a compact group and $l$ be a continuous length function on $G$. Let $e$ be the identity element of $G$. Let $\A$ be a unital C*-algebra whose $\ast$-automorphism group is denoted by $\operatorname{Aut}(\A)$, and let $\alpha: g~\in~G~\mapsto~\alpha^g \in \operatorname{Aut}(\A)$ be a strongly continuous action of $G$ on $\A$, which is \emph{ergodic} in the sense that:
\begin{equation*}
\bigcap_{g\in G}\{ a \in \A : \alpha^g(a) = a\} = \C \unit_\A\text{.}\end{equation*}
For all $a\in\A$, we define:
\begin{equation*}
\Lip(a) = \sup \left\{ \frac{\|\alpha^g(a) - a\|_\A} {l(g)} : g \in G\setminus\{e\} \right\} \text{,}
\end{equation*}
where $\Lip(a)$ may be infinite. Then \cite[Theorem 2.3]{Rieffel98a} proves that $(\A,\Lip)$ is a indeed a {\qcms}. This result employs the deep fact that ergodic actions of compact groups on C*-algebras have finite dimensional spectral subspaces \cite{Hoegh-Krohn81}, and is therefore very nontrivial.
\end{example}

As seen with Example (\ref{q-tori-ex}), proving that a given unital Lipschitz pair is a quantum compact metric space is in general difficult. Another source of potential examples of {\qcms s} comes from the original suggestion of Connes to derive metric information from spectral triples \cite{Connes80} using Definition (\ref{mongekant-def}). For our purpose, a \emph{spectral triple} $(\A,\Hilbert_\pi,D)$ is given by \cite{Connes}:
\begin{enumerate}
\item a unital C*-algebra $\A$,
\item a non-degenerate faithful *-representation $\pi$ of $\A$ on a Hilbert space $\Hilbert$,
\item an operator $D$ on $\Hilbert$, not necessarily bounded, such that:
\begin{equation*}
\A^1 = \{ a \in \sa{\A} : [D,\pi(a)] \text{ is a closeable operator with bounded closure} \}
\end{equation*}
is dense in $\sa{\A}$.
\end{enumerate}
Given a spectral triple $(\A,\Hilbert,D)$ over a unital C*-algebra $\A$, we may define:
\begin{equation}\label{Dirac-Lip-eq}
\forall a \in \A^1\quad \Lip_D(a)=\|[D,\pi(a)]\|_{\B(\Hilbert)}\text{,}
\end{equation}
where $\|\cdot\|_{\B(\Hilbert)}$ is the operator norm for bounded operators on $\Hilbert$. Note that we identify $[D,\pi(a)]$ with its closure when $a\in\A^1$. By construction, $(\A,\Lip_D)$ is a unital Lipschitz pair.

It is not yet known when $(\A,\Lip_D)$ is a quantum compact metric space in general, even with the additional assumptions that $D$ is self-adjoint and $D^2$ has a compact resolvant whose sequence of singular values is $p$-summable for some $p > 0$ (see \cite{Pavlovic98} for some work on this fascinating question, though his conclusions are really a particular case of the main result of \cite{Rieffel98a}). It is however known \cite[Proposition 1]{Connes89} that the {\mongekant} associated with $\Lip_D$ recovers the Riemannian metric on a spin manifold when $D$ is chosen to be the actual Dirac operator associated to the given Riemannian metric, acting on the square-integrable sections of a spin bundle over the manifold. In \cite[Theorem 4.2]{Rieffel98a}, Rieffel proved that the standard (iso-spectral) spectral triple on quantum tori also provides a quantum compact metric space structure by comparing the Lip-norms for these spectral triples with the Lip-norms from Example (\ref{q-tori-ex}). We include two more examples of spectral triples for which the associated Lipschitz pair is indeed a quantum compact metric space.

\begin{example}\cite{Ozawa05}\label{dirac1-ex}
Let $G$ be a finitely generated discrete group and $l$ a length function on $G$ such that $(G,l)$ is a Gro\-mov hyperbolic group. Let $\pi$ be the left regular representation of $C_{\mathrm{red}}^\ast(G)$ on $\ell^2(G)$ and let $D$ be the (unbounded) operator of multiplication by $l$ on $\ell^2(G)$. With the notation of Equation (\ref{Dirac-Lip-eq}), Ozawa and Rieffel proved in \cite{Ozawa05} that the unital Lipschitz pair $(C_{\mathrm{red}}^\ast(G),\Lip_D)$ is a quantum compact metric space.
\end{example}

\begin{example}\cite{Rieffel02}\label{dirac2-ex}
Let $d\in\N\setminus\{0,1\}$ and $l$ be the word length for some set of generators of $\Z^d$. Let $\sigma$ be an arbitrary $2$-cocycle of $\Z^d$. Again, let $\pi$ be the left regular $\sigma$-representation of $\Z^d$ on $\ell^2(\Z^d)$, and $D$ be the (unbounded) multiplication operator by $l$ on $\ell^2(\Z^d)$. Then Rieffel proved in \cite{Rieffel02} that $(C^\ast(\Z^d,\sigma),\Lip_D)$ is a quantum compact metric space, with $\Lip_D$ defined by Equation (\ref{Dirac-Lip-eq}). This proof involves Connes' cosphere algebras.
\end{example}

The situation for more general groups, let alone more general spectral triples, is open, with only a few other known specific examples such as noncommutative spheres \cite{Li05}. In \cite{Latremoliere12b}, we extend the notion of quantum metric space to the locally compact setting, and we then show how the Moyal plane can be seen as a quantum locally compact metric space by using the spectral triple introduced in \cite{Varilly04}.

\bigskip

All the examples we have provided share a property which was not a part of Rieffel's original definition of a compact quantum metric space: all the Lip-norms satisfy the Leibniz inequality (see Equation (\ref{intro-leibniz-eq})). In fact, most examples are given by a form of spectral triple. For instance, as seen in \cite{Rieffel99}, let $(X,\mathsf{d})$ be a compact metric space and let $\Delta=\{(x,x):x\in X\}$. Let $\mu$ be a strictly positive Radon probability measure on the locally compact Haus\-dorff space $Z=X\times X \setminus \Delta$. For any $f \in C(X)$, we define, as is standard, the operators $L_f$ and $R_f$ on $L^2(Z,\mu)$ by $R_f(g)(x,y)=f(x)g(x,y)$ and $L_f(g)(x,y)=g(x,y)f(y)$ for all $g\in L^2(Z,\mu)$ and $(x,y)\in Z$. For all Lipschitz functions $g\in C(X)$ and $(x,y)\in Z$, we set:
\begin{equation*}
\delta(g) (x,y) = \frac{g(x,y)}{\mathsf{d}(x,y)}
\end{equation*}
and note that $\delta$ defines an unbounded operator on $L^2(Z,\mu)$.  Let:
\begin{equation*}
\Hilbert = L^2(Z,\mu)\oplus L^2(Z,\mu)\text{, }\quad D = \begin{pmatrix}
0 & \delta\\
\delta & 0
\end{pmatrix}
\text{ and }\quad\pi(f) = \begin{pmatrix}
L_f & 0 \\
0 & R_f 
\end{pmatrix}
\end{equation*}
for all $f\in C(X)$. Then $(C(X),\Hilbert,D)$ is a spectral triple in our weak sense, and $\Lip_D$ is simply the Lipschitz seminorm associated with $\mathsf{d}$.

Thus, it would be natural to ask that our notion of convergence for quantum compact metric spaces be well-behaved, in some sense, with respect to the Leibniz property, and maybe even with the notion of bimodule-valued derivations. Our quantum propinquity may be a step in this direction, as its construction relies on inner-derivations of special bimodules between {\qcms s}, as we shall see.

\bigskip

The Leibniz property, while unnecessary in the construction of the quantum Gro\-mov-Haus\-dorff distance \cite{Rieffel00}, is desirable for the study of the continuity of various additional properties of quantum compact metric spaces with respect to metric convergence. For instance, Rieffel \cite{Rieffel10} uses a strong version of the Leibniz property to study the convergence of vector bundles associated with a Gro\-mov-Haus\-dorff convergent sequence of compact metric spaces. The purpose of developing the notion of quantum metric spaces is to adapt and extend results in metric geometry to noncommutative C*-algebras with the hope that many interesting geometric properties will be well-behaved with respect to various hypertopologies on quantum metric spaces. In the spirit of \cite{Rieffel10}, it would seem natural to work within the class of quantum metric spaces with Leibniz Lip-norms to carry information about the multiplicative structure of C*-algebras. It may even prove useful to restrict attention to Lip-norms coming from a form of differential calculus. 

To this end, Rieffel introduced the notion of a \emph{compact C*-metric space}, which imposes a strong form of the Leibniz identity to the Lip-norms involved, and attempted to recast the convergence of matrix algebras to spheres entirely within the framework of compact C*-metric spaces. This requires to avoid any recourse to order-unit spaces with no multiplicative structure, and forces one to work only with C*-algebras, which is a first complication. Moreover, one must work only with Leibniz Lip-norms, which again add some high degree of technical difficulties. Last, and maybe most challenging, one would desire to have a  form of metric convergence which strengthens the quantum Gro\-mov-Haus\-dorff convergence by working within the class of compact C*-metric spaces. Motivated by this observation, Rieffel introduced the \emph{quantum proximity}, a semi-pseudo-metric on compact C*-metric spaces. This object suffers from two fundamental problems. First, it does not satisfy the triangle inequality. This issue may be addressed by a standard technique, as we shall see in this paper. A more serious issue is that the quantum proximity may be null between two non-isomorphic C*-algebras.

The purpose of this paper is to introduce a notion of distance on a class of quantum compact metric spaces which provides a solution to the problems raised by Rieffel's quantum proximity. We propose that our new distance could play the role intended by Rieffel for the quantum proximity. The class of {\Lqcms s} which will be the space for our metric includes, but expand slightly on, the class of compact C*-metric spaces.  Our metric satisfies that, among other properties, two Leibniz quantum compact metric spaces are at distance zero only if their underlying C*-algebras are *-isomorphic. To this end, we impose a natural conditions on Lip-norms to control their behavior with respect to multiplication. To keep our framework general, and because of inherent difficulties with Lipschitz seminorms of complex functions (see, in particular, \cite[Example 5.4]{Rieffel10c}), we actually work within the Jordan-Lie-algebra of self-adjoint elements:

\begin{notation}
Let $\A$ be a C*-algebra and $a,b \in \A$. The \emph{Jordan product} $\frac{1}{2}(ab+ba)$ is denoted by $\Jordan{a}{b}$. We denote the \emph{Lie product} $\frac{1}{2i}(ab-ba) = \frac{1}{2i}[a,b]$ of $a$ and $b$ by $\Lie{a}{b}$. Thus $(\sa{\A},\Jordan{\cdot}{\cdot},\Lie{\cdot}{\cdot})$ is a Jordan-Lie algebra \cite[Definition 1.1.2]{landsman98}; in particular it is closed under the Jordan product and the Lie product. (Our Lie product differs by a factor of $i$ from the Lie product from \cite{landsman98}.)
\end{notation}

We shall henceforth employ Convention (\ref{dom-convention}) regarding Lipschitz pairs.

\begin{definition}\label{JLL-pair-def}
A unital Lipschitz pair $(\A,\Lip)$ has the \emph{{\JLL} property}, or equivalently is called a \emph{unital {\JLL} pair}, when for $a,b \in \sa{\A}$, we have:
\begin{equation*}
\Lip\left(\Jordan{a}{b}\right) = \Lip\left(\frac{ab+ba}{2}\right) \leq \|a\|_\A\Lip(b) + \|b\|_\A\Lip(a)
\end{equation*}
and
\begin{equation*}
\Lip\left(\Lie{a}{b} \right) = \Lip\left(\frac{ab-ba}{2i}\right) \leq \|a\|_\A\Lip(b) + \|b\|_\A\Lip(a)\text{.}
\end{equation*}
\end{definition}

\begin{remark}
The domain of a Leibniz seminorm for a unital Lipschitz pair is always a Jordan-Lie algebra.
\end{remark}

Thus, the seminorm of a unital {\JLL} pair satisfies a form of the Leibniz inequality, albeit restricted to the Jordan product and the commutator. A natural and common source of examples of unital {\JLL} pair is:

\begin{proposition}\label{Leibniz-implies-JLL-prop}
Let $\A$ be a unital C*-algebra and $\Lip$ be a seminorm defined on a dense $\C$-subspace $\operatorname{dom}_\C(\Lip)$ of $\A$, such that $\operatorname{dom}_\C(\Lip)$ is closed under the adjoint operation, such that $\{a \in \operatorname{dom}_\C(\Lip) : \Lip(a) = 0 \} = \C \unit_\A$ and, for all $a,b \in \operatorname{dom}_\C(\Lip)$, we have:
\begin{equation*}
\Lip(ab) \leq \|a\|_\A\Lip(b) + \|b\|_\A\Lip(a)\text{.}
\end{equation*}
Then $(\A,\Lip)$ is a unital {\JLL} pair, identifying $\Lip$ with its restriction to $\sa{\A}\cap\operatorname{dom}_\C(\Lip)$.
\end{proposition}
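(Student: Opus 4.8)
The plan is to check the two defining conditions in turn: first, that the restriction of $\Lip$ to $\sa{\A}\cap\operatorname{dom}_\C(\Lip)$ makes $(\A,\Lip)$ a unital Lipschitz pair in the sense of Definition \ref{Lipschitz-pair-def}, and second, that this restriction obeys the two inequalities of Definition \ref{JLL-pair-def}.

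For the first condition, I would record at the outset that $\operatorname{dom}_\C(\Lip)$ is a self-adjoint subalgebra of $\A$: if $a,b\in\operatorname{dom}_\C(\Lip)$ then $\|a\|_\A\Lip(b)+\|b\|_\A\Lip(a)<\infty$, so the Leibniz inequality forces $\Lip(ab)<\infty$, that is, $ab\in\operatorname{dom}_\C(\Lip)$. For density of $\sa{\A}\cap\operatorname{dom}_\C(\Lip)$ in $\sa{\A}$, given $a\in\sa{\A}$ and $\varepsilon>0$ I pick $x\in\operatorname{dom}_\C(\Lip)$ with $\|a-x\|_\A<\varepsilon$; then $\frac{1}{2}(x+x^\ast)\in\sa{\A}\cap\operatorname{dom}_\C(\Lip)$, since $\operatorname{dom}_\C(\Lip)$ is closed under the adjoint, and $\left\|a-\frac{1}{2}(x+x^\ast)\right\|_\A=\left\|\frac{1}{2}\left((a-x)+(a-x)^\ast\right)\right\|_\A\leq\|a-x\|_\A<\varepsilon$. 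Finally, the null space computation is immediate: $\left\{a\in\sa{\A}\cap\operatorname{dom}_\C(\Lip):\Lip(a)=0\right\}=\left\{a\in\operatorname{dom}_\C(\Lip):\Lip(a)=0\right\}\cap\sa{\A}=\C\unit_\A\cap\sa{\A}=\R\unit_\A$, which is exactly what Definition \ref{Lipschitz-pair-def} requires.

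For the {\JLL} inequalities, fix $a,b\in\sa{\A}$. If either $a$ or $b$ fails to lie in $\operatorname{dom}_\C(\Lip)$ then the right-hand sides are infinite and there is nothing to prove, so assume $a,b\in\operatorname{dom}_\C(\Lip)$. Then $ab$ and $ba=(ab)^\ast$ both lie in $\operatorname{dom}_\C(\Lip)$, hence so do $\Jordan{a}{b}=\frac{1}{2}(ab+ba)$, which is self-adjoint, and $\Lie{a}{b}=\frac{1}{2i}(ab-ba)$, which is self-adjoint since $ab-ba$ is skew-adjoint. Invoking the triangle inequality and the homogeneity of the $\C$-seminorm $\Lip$ (so that $\Lip\left(\frac{1}{2i}(ab-ba)\right)=\frac{1}{2}\Lip(ab-ba)$), together with the Leibniz inequality applied to $ab$ and to $ba$, I obtain
\begin{equation*}
\Lip\left(\Jordan{a}{b}\right)\leq\frac{1}{2}\left(\Lip(ab)+\Lip(ba)\right)\leq\|a\|_\A\Lip(b)+\|b\|_\A\Lip(a)
\end{equation*}
and, in exactly the same way, $\Lip\left(\Lie{a}{b}\right)=\frac{1}{2}\Lip(ab-ba)\leq\frac{1}{2}\left(\Lip(ab)+\Lip(ba)\right)\leq\|a\|_\A\Lip(b)+\|b\|_\A\Lip(a)$, which are the two desired inequalities.

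There is no genuine obstacle here: the argument is a short bookkeeping exercise. The only points that deserve a moment's attention are verifying that $\operatorname{dom}_\C(\Lip)$ is stable under products, so that the Jordan and Lie products of elements of the domain remain in the domain, and tracking the scalar $\frac{1}{2i}$ through the $\C$-homogeneity of $\Lip$ when handling the Lie product.
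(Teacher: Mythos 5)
Your proof is correct and follows essentially the same route as the paper: density of the self-adjoint part of the domain via the symmetrization $\frac{1}{2}(x+x^\ast)$, and the two {\JLL} inequalities via the triangle inequality, $\C$-homogeneity, and the Leibniz hypothesis applied to $ab$ and $ba$. The extra bookkeeping you include (closure of the domain under products and the null-space computation) is fine and merely makes explicit what the paper leaves implicit.
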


\begin{proof}
Following Convention (\ref{dom-convention}), we extend $\Lip$ to $\A$ by setting $\Lip(a) = \infty$ for all $a\not\in\operatorname{dom}_\C(\Lip)$.

First, we note that if $a\in\sa{\A}$, then by assumption, $a$ is the limit of a sequence $(a_n)_{n\in\N}$ in the domain of $\Lip$. Yet, since $\Lip$ is a seminorm and the domain of $\Lip$ is assumed self-adjoint, we conclude that $\left(\frac{a_n+a_n^\ast}{2}\right)_{n\in\N}$ lies within the domain of $\Lip$, while still being a sequence in $\sa{\A}$ which converges to $a$. Thus, the restriction of $\Lip$ to $\sa{\A}$ has dense domain.

Now, for all $a,b \in \sa{\A}$ we have:
\begin{equation*}
\begin{split}
\Lip(\Jordan{a}{b})&=\Lip\left(\frac{1}{2}(ab+ba)\right)
\\ &\leq \frac{1}{2}\left(2\|a\|_\A\Lip(b) + 2\|b\|_\A\Lip(a)\right) = \|a\|_\A\Lip(b) + \|b\|_\A\Lip(a) \text{,}
\end{split}
\end{equation*}
and similarly for the Lie product.
\end{proof}

\begin{example}
The unital Lipschitz pairs defined in Examples (\ref{classical-ex}) and (\ref{q-tori-ex}), or constructed from spectral triples or derivations \cite{Connes} are unital {\JLL} pairs.
\end{example}

We now can present the objects of interest for this paper:

\begin{definition}\label{lqcms-def}
A \emph{\Lqcms} $(\A,\Lip)$ is a quantum compact metric space $(\A,\Lip)$ such that $(\A,\Lip)$ is a unital {\JLL} pair and $\Lip$ is lower semi-cont\-inuous with respect to the norm $\|\cdot\|_\A$ of $\A$. If $(\A,\Lip)$ is a {\Lqcms}, then we call $\Lip$ a {\JLL} Lip-norm.
\end{definition}

\begin{notation}\label{lcqmss-not}
The class of all {\Lqcms s} is denoted by the symbol $\LCQMS$.
\end{notation}

\begin{example}
Examples (\ref{classical-ex}), (\ref{q-tori-ex}), as well as the Lip-norms from the spectral triples in Examples (\ref{dirac1-ex}) and (\ref{dirac2-ex}) are all {\Lqcms}.
\end{example}

\begin{remark}\label{closed-rmk}
Let $(\A,\Lip)$ be a {\Lqcms}. As $\Lip$ is lower semi-cont\-inuous, for any $r > 0$, the set:
\begin{equation*}
\Lipball{r}{\A,\Lip} = \{a\in\sa{\A} : \Lip(a)\leq 1\text{ and }\|a\|_\A \leq r \}
\end{equation*}
is closed, and as $\Lip$ is a Lip-norm, by Theorem (\ref{az-thm}), $\Lipball{r}{\A,\Lip}$ is totally bounded, and thus compact. Thus $\Lip$ is a \emph{closed} Lip-norm in the sense of \cite[Definition 4.5]{Rieffel99}. Since $\A$ is a C*-algebra, and thus $\sa{\A}$ is always complete, Lip-norms are lower semi-cont\-inuous if and only if they are closed. The main advantage of closed Lip-norms is that they are uniquely determined by their associated {\mongekant} \cite[Theorem 4.1]{Rieffel99} and this advantage makes the notion of quantum isometry unambiguous. In our context, starting from a {\JLL} Lip-norm which is not lower semi-cont\-inuous, it is unclear whether the lower-semi-cont\-inuous Lip-norm one recovers by duality from the {\mongekant} would be {\JLL}. Thus our assumption is natural.
\end{remark}

\begin{remark}
  Rieffel introduces the class of \emph{compact C*-metric spaces} in \cite{Rieffel10c}, which, using our terminology, are {\Lqcms s} with strong-Leibniz Lip-norms (see Inequalities (\ref{intro-leibniz-eq}) and (\ref{intro-strong-leibniz-eq})), rather than only Leibniz, and with additional requirements on the domains of the Lip-norms. Our work applies to this sub-class of {\Lqcms s}, and in fact we shall develop our theory so that its users may adapt our constructions to various sub-classes of {\Lqcms s} (and even of {\qcms s} with well-behaved Lip-norms). It is a very interesting question to determine which such sub-classes are closed for the quantum propinquity. On the other hand, the quantum propinquity is well-behaved within the context of {\Lqcms s}, and thus we choose this natural class for our exposition.
\end{remark}

We now return to the description of our category of {\Lqcms s}. We define our notions of morphisms for the objects we have introduced in Definition (\ref{lqcms-def}). In fact, the category consists of the natural restriction of the category of quantum compact metric spaces to the class of {\Lqcms s}. Throughout this paper, we shall use the following notation:

\begin{notation}
If $\A$, $\B$ are two unital C*-algebras, and $\pi : \A\rightarrow \B$ is a unital *-homomorphism of C*-algebra, we denote by $\StateSpace(\pi)$ the continuous affine map:
\begin{equation*}
\StateSpace(\pi) : \varphi \in \StateSpace(\B) \longmapsto \varphi \circ \pi \text{,}
\end{equation*}
and we refer to $\StateSpace(\pi)$ as the dual map of $\pi$, as a slight abuse of language.
\end{notation}

In particular, if $\pi : \A\hookrightarrow\B$ is a unital *-monom\-orphism, then $\StateSpace(\pi): \StateSpace(\B)\twoheadrightarrow\StateSpace(\A)$ is a surjective weak* continuous affine map.

\begin{definition}
Let $(\A,\Lip_\A)$ and $(\B,\Lip_\B)$ be two {\qcms s}. A \emph{quantum Lipschitz homomorphism} $\Phi : \A\rightarrow\B$ is a unital *-homomorphism from $\A$ to $\B$ such that its dual map $\StateSpace(\Phi) : \StateSpace(\B)\rightarrow\StateSpace(\A)$ is a Lipschitz map from $(\StateSpace(\B),\Kantorovich{\Lip_\B})$ to $(\StateSpace(\A),\Kantorovich{\Lip_\A})$, where $\Kantorovich{\Lip_\A}$ (respectively $\Kantorovich{\Lip_\B}$) is the {\mongekant} associated with $(\A,\Lip_\A)$ (respectively $(\B,\Lip_\B)$).
\end{definition}

It is an easy exercise to check that the composition of two quantum Lipschitz homomorphisms is a quantum Lipschitz homomorphism, where composition is the usual composition of *-homomorphisms. Thus, a quantum Lipschitz isomorphism is a *-isomorphism whose dual map is a bi-Lipschitz function between the state spaces metrized by their respective {\mongekant s}. A special case of such quantum Lipschitz isomorphisms is given by:

\begin{definition}\label{quantum-isometry-def}
Let $(\A,\Lip_\A)$ and $(\B,\Lip_\B)$ be two {\qcms s}. A \emph{isometric isomorphism} $\Phi : \A\rightarrow\B$ is a *-isomorphism from $\A$ onto $\B$ such that the dual map $\StateSpace(\Phi) : \StateSpace(\B)\rightarrow\StateSpace(\A)$ is an isometry from $(\StateSpace(\B),\Kantorovich{\Lip_\B})$ onto $(\StateSpace(\A),\Kantorovich{\Lip_\A})$, where $\Kantorovich{\Lip_\A}$ (respectively $\Kantorovich{\Lip_\B}$) is the {\mongekant} associated with $(\A,\Lip_\A)$ (respectively $(\B,\Lip_\B)$).
\end{definition}

It is useful to recall that, by \cite[Theorem 6.2]{Rieffel00}, we have the following characterization of isometric isomorphisms:
\begin{theorem}\label{quantum-isometry-prop}
Let $(\A,\Lip_\A)$ and $(\B,\Lip_\B)$ be two {\Lqcms s}. A *-isomorphism $\Phi : \A\rightarrow\B$ is a isometric isomorphism if and only if $\Lip_\B\circ \Phi = \Lip_\A$.
\end{theorem}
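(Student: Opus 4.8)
The plan is to prove the two implications separately. The implication $\Lip_\B\circ\Phi = \Lip_\A \Rightarrow \Phi$ isometric isomorphism is a direct computation, while the converse rests on the fact, recorded in Remark \ref{closed-rmk} (i.e. \cite[Theorem 4.1]{Rieffel99}), that a \emph{closed} Lip-norm is uniquely determined by its associated {\mongekant}. I expect this last appeal to be the crux: it is precisely here that the lower semi-continuity requirement built into Definition \ref{lqcms-def} becomes indispensable, since without closedness two distinct Lip-norms can induce the same {\mongekant} and the converse would fail.

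For the first implication, I would argue as follows. Since $\Phi$ is a unital $\ast$-isomorphism it restricts to a bijection $\sa{\A}\to\sa{\B}$, and $\StateSpace(\Phi)$ is a weak*-homeomorphism of $\StateSpace(\B)$ onto $\StateSpace(\A)$ with inverse $\StateSpace(\Phi^{-1})$. Given $\varphi,\psi\in\StateSpace(\B)$, one writes each $b\in\sa{\B}$ as $b=\Phi(a)$ for the unique $a\in\sa{\A}$, observes that $\Lip_\B(b)\leq 1$ is equivalent to $\Lip_\A(a)\leq 1$ by hypothesis, and that $\varphi(b) = \StateSpace(\Phi)(\varphi)(a)$; substituting into the defining supremum for $\Kantorovich{\Lip_\B}$ yields $\Kantorovich{\Lip_\B}(\varphi,\psi) = \Kantorovich{\Lip_\A}(\StateSpace(\Phi)(\varphi),\StateSpace(\Phi)(\psi))$. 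Thus $\StateSpace(\Phi)$ is an isometry, and being a bijection it is an isometry \emph{onto} $\StateSpace(\A)$, so $\Phi$ is an isometric isomorphism in the sense of Definition \ref{quantum-isometry-def}.

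For the converse, suppose $\Phi$ is an isometric isomorphism and set $\Lip'_\A = \Lip_\B\circ\Phi$, a seminorm on $\sa{\A}$ with domain $\Phi^{-1}(\dom{\Lip_\B})$, which is dense since $\Phi^{-1}$ is a homeomorphism. As $\Phi$ is unital, $\{a:\Lip'_\A(a)=0\} = \Phi^{-1}(\R\unit_\B) = \R\unit_\A$, so $(\A,\Lip'_\A)$ is a unital Lipschitz pair. The same substitution $b=\Phi(a)$ as above, now combined with the hypothesis that $\StateSpace(\Phi)$ is an isometry, shows $\Kantorovich{\Lip'_\A} = \Kantorovich{\Lip_\A}$ on $\StateSpace(\A)$; in particular $\Kantorovich{\Lip'_\A}$ metrizes the weak* topology, so $\Lip'_\A$ is a Lip-norm. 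Moreover $\Lip'_\A$ is lower semi-continuous for $\|\cdot\|_\A$, because a $\ast$-isomorphism is C*-norm isometric, so $a_n\to a$ forces $\Phi(a_n)\to\Phi(a)$ and lower semi-continuity of $\Lip_\B$ gives $\Lip'_\A(a)\leq\liminf_n\Lip'_\A(a_n)$. Hence $\Lip_\A$ and $\Lip'_\A$ are both closed Lip-norms on $\sa{\A}$ sharing the same {\mongekant}, and by the uniqueness recorded in Remark \ref{closed-rmk} they coincide: $\Lip_\A = \Lip'_\A = \Lip_\B\circ\Phi$. This completes the plan; the only delicate step, as noted, is this final invocation of closedness.
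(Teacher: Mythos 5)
Your proposal is correct, and every step checks out: the forward direction is the standard substitution $b=\Phi(a)$ in the supremum defining $\Kantorovich{\Lip_\B}$, and the converse correctly verifies that $\Lip_\B\circ\Phi$ is a unital Lipschitz pair seminorm on $\sa{\A}$, that it induces the same {\mongekant} as $\Lip_\A$ (via the isometry hypothesis on $\StateSpace(\Phi)$ and the fact that $\StateSpace(\Phi)^{-1}=\StateSpace(\Phi^{-1})$), that it is lower semi-continuous because $\Phi$ is isometric for the C*-norms, and then invokes the uniqueness of closed Lip-norms with a given {\mongekant} from \cite[Theorem 4.1]{Rieffel99}, exactly the fact recorded in Remark (\ref{closed-rmk}). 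The difference with the paper is one of packaging rather than substance: the paper's proof is a one-line citation of \cite[Theorem 6.2]{Rieffel00}, which is precisely this dual characterization of isometries for closed Lip-norms, whereas you unpack that theorem and rederive it from the more basic uniqueness result for closed Lip-norms. Your route is therefore self-contained modulo \cite[Theorem 4.1]{Rieffel99} and has the pedagogical merit of making explicit where the lower semi-continuity built into Definition (\ref{lqcms-def}) is genuinely needed (only in the converse direction, and only through closedness); the paper's route is shorter because Rieffel has already done this work in the order-unit setting, which applies verbatim to $\sa{\A}$.
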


\begin{proof}
Apply \cite[Theorem 6.2]{Rieffel00}, since $\Lip_\A$ and $\Lip_\B$ are closed by assumption (see Remark (\ref{closed-rmk})).
\end{proof}

The definition of the quantum Gro\-mov-Haus\-dorff distance relies on the notion of a quantum metric subspace, which is a quotient object in the category of order-unit spaces, endowed with the quotient seminorm from a Lip-norm --- which is itself a Lip-norm \cite{Rieffel00}. Unfortunately, the quotient of a Leibniz seminorm may not be Leibniz. This difficulty is discussed in \cite{Rieffel10c}, and was encountered before in noncommutative geometry (e.g. \cite{Kerr02},\cite{blackadar91}). Rieffel proposed in \cite{Rieffel10c} a notion of compatibility between a seminorm and its quotient which would allow the Leibniz property to be inherited by the quotient seminorm. However, even this additional condition does not appear natural in the category we have just defined. In general, sub-objects in noncommutative geometry are a challenge to define, and it would appear that Leibniz seminorms, which are tied to the C*-algebraic structures (unlike general Lip-norms) exhibit the same reluctance to localize to some ``subspace'' in general. Thus, we take a completely different approach, which simply does not need the notion of sub-object at all, and thus fits well within our noncommutative framework.

\section{Bridges and Treks}

The approach we propose to define our new metric is based on our notion of \emph{bridge}, and associated numerical quantities. A bridge between two {\Lqcms s} $(\A,\Lip_\A)$ and $(\B,\Lip_\B)$ allows us to define Leibniz Lip-norms on $\A\oplus\B$, in the manner which then allows, by the work of Rieffel in \cite{Rieffel00}, to compute upper bounds for the quantum Gro\-mov-Haus\-dorff distance between $(\A,\Lip_\A)$ and $(\B,\Lip_\B)$. However, our bridges will allow to define Lip-norms on $\A\oplus\B$ with the Leibniz property of Definition (\ref{JLL-pair-def}), or even the strong Leibniz property (Inequalities (\ref{intro-leibniz-eq},\ref{intro-strong-leibniz-eq})), as long as $\Lip_\A$ and $\Lip_\B$ have these same properties. 

Moreover, our distance is computed from the bridges themselves. Our idea thus allows us to avoid using sub-objects in the dual category of {\Lqcms s}, as those are not in general well-behaved. Instead, we employ paths consisting of bridges, which we will refer to as \emph{treks}, to construct our distance. Using treks essentially amount to enforcing the triangle inequality, yet it places all the weight of our efforts on proving that our distance is zero exactly when two {\Lqcms s} are isometrically isomorphic. The definition of the quantum Gro\-mov-Haus\-dorff propinquity and its main properties will be the matter of the next two sections, once we define the core notions of bridges, treks and their lengths in this section.

\bigskip

We first introduce the following notion, whose role will be central to our work.

\begin{definition}\label{one-level-def}
The \emph{$1$-level} of an element $\omega \in \D$ in a unital C*-algebra $\D$ is the set:
\begin{equation*}
\mathscr{S}_1(\omega) = \left\{ \varphi \in \StateSpace(\D) \middle| 
\begin{aligned}
\varphi\left( (\unit_\D-\omega)^\ast(\unit_\D-\omega)\right) = 0\text{,}\\
\varphi\left( (\unit_\D-\omega)(\unit_\D-\omega)^\ast\right) = 0
\end{aligned}
\right\} \text{.}
\end{equation*}
\end{definition}

\begin{remark}
A state $\varphi$ of a unital C*-algebra $\A$ which is a member of the $1$-level set of a \emph{self-adjoint} element $\omega \in \sa{\A}$ is \emph{definite} on $\omega$, in the sense, for instance, of \cite[Exercise 4.6.16]{Kadison91,Kadison97}. Our notion of $1$-level thus extends the notion of a definite state for a non-self-adjoint element.
\end{remark}

\begin{remark}\label{selfadjointpivot-rmk}
The theory developed in this section does not require either assumptions of self-adjointness (or even normality) or any bound on the norm. We believe that it is likely that future applications of the quantum propinquity will benefit from either or both assumptions, though at the moment we find no reason to restrict the level of generality of our construction. Thus, we put the minimal assumption on our pivot elements, and we simply remark that the proofs of this section would carry identically if we added the restriction of self-adjointness of the pivot, or imposed some bound on the norm of the pivot.
\end{remark}

A priori, the $1$-level of an element may be empty, but we shall only be interested in elements with non-empty $1$-level in our work. The key purpose of this set for us is the following lemma:

\begin{lemma}
Let $\D$ be a unital C*-algebra. A state $\varphi\in\StateSpace(\D)$ belongs to the $1$-level of a given $\omega \in \D$, if and only if for all $d\in \D$, we have $\varphi(d) = \varphi(\omega d) = \varphi(d \omega)$.
\end{lemma}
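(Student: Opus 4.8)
The plan is to prove the equivalence by a standard Cauchy--Schwarz argument for states on C*-algebras. Recall that for any state $\varphi$ on a unital C*-algebra $\D$ and any $a,b\in\D$, we have $|\varphi(b^\ast a)|^2 \leq \varphi(b^\ast b)\varphi(a^\ast a)$, and in particular $|\varphi(c)|^2 \leq \varphi(c^\ast c)$ when taking $b = \unit_\D$. The hypotheses defining $\mathscr{S}_1(\omega)$ say precisely that $\varphi((\unit_\D-\omega)^\ast(\unit_\D-\omega)) = 0$ and $\varphi((\unit_\D-\omega)(\unit_\D-\omega)^\ast) = 0$, i.e. the element $e := \unit_\D - \omega$ has $\varphi(e^\ast e) = \varphi(e e^\ast) = 0$.

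First I would prove the forward implication. Assume $\varphi \in \mathscr{S}_1(\omega)$. For any $d \in \D$, apply Cauchy--Schwarz with $a = d$ and $b = e = \unit_\D-\omega$: we get $|\varphi(e^\ast d)|^2 \leq \varphi(e^\ast e)\varphi(d^\ast d) = 0$, so $\varphi(e^\ast d) = 0$, i.e. $\varphi(d) = \varphi(\omega^\ast d)$ — wait, that is not quite the statement; I need $\varphi(\omega d)$, not $\varphi(\omega^\ast d)$. So instead apply Cauchy--Schwarz with $a = e = \unit_\D - \omega$ and $b = d^\ast$ (so $b^\ast = d$), giving $|\varphi(d\, e)|^2 = |\varphi(b^\ast a)|^2 \leq \varphi(b^\ast b)\varphi(a^\ast a) = \varphi(d\,d^\ast)\cdot\varphi(e^\ast e) = 0$. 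Hmm, this gives $\varphi(d(\unit_\D-\omega)) = 0$, i.e. $\varphi(d) = \varphi(d\omega)$ for all $d$. To get the other equality $\varphi(d) = \varphi(\omega d)$, I use the second hypothesis $\varphi(e e^\ast) = 0$: applying the positivity/Cauchy--Schwarz inequality $|\varphi(a^\ast b)|^2 \le \varphi(a^\ast a)\varphi(b^\ast b)$ with $a^\ast = e$ (i.e. $a = e^\ast$) and $b = d$ gives $|\varphi(e d)|^2 \le \varphi(e e^\ast)\varphi(d^\ast d) = 0$, so $\varphi(ed) = 0$, i.e. $\varphi(d) = \varphi(\omega d)$ for all $d\in\D$. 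Combining, $\varphi(d) = \varphi(\omega d) = \varphi(d\omega)$ for all $d$.

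For the converse, suppose $\varphi(d) = \varphi(\omega d) = \varphi(d\omega)$ for all $d \in \D$. I would substitute $d = e^\ast e = (\unit_\D-\omega)^\ast(\unit_\D-\omega)$. Expanding, $e^\ast e = \unit_\D - \omega - \omega^\ast + \omega^\ast\omega$; applying $\varphi$ and using linearity, $\varphi(e^\ast e) = \varphi(\unit_\D) - \varphi(\omega) - \varphi(\omega^\ast) + \varphi(\omega^\ast\omega) = 1 - \varphi(\omega) - \varphi(\omega^\ast) + \varphi(\omega^\ast\omega)$. But the hypothesis with $d = \unit_\D$ gives $\varphi(\omega) = 1$, and with $d = \omega^\ast$ gives $\varphi(\omega^\ast) = \varphi(\omega^\ast\omega)$ (using $\varphi(d) = \varphi(\omega d)$, hence $\varphi(\omega^\ast) = \varphi(\omega\omega^\ast)$; I should instead use $\varphi(d) = \varphi(d\omega)$ with $d = \omega^\ast$ to get $\varphi(\omega^\ast) = \varphi(\omega^\ast\omega)$, and separately $\varphi(\omega^\ast) = \varphi(\omega\omega^\ast)$ from $\varphi(d)=\varphi(\omega d)$). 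Taking $\varphi(\omega^\ast) = \varphi(\omega^\ast\omega)$: then $\varphi(e^\ast e) = 1 - 1 - \varphi(\omega^\ast\omega) + \varphi(\omega^\ast\omega) = 0$. Symmetrically, expanding $e e^\ast = \unit_\D - \omega^\ast - \omega + \omega\omega^\ast$ and using $\varphi(\omega^\ast) = \varphi(\omega\omega^\ast)$ (from $\varphi(d) = \varphi(\omega d)$ with $d = \omega^\ast$) gives $\varphi(ee^\ast) = 0$. Hence $\varphi \in \mathscr{S}_1(\omega)$.

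I do not anticipate a serious obstacle here; the only care needed is bookkeeping about which side of Cauchy--Schwarz to use so as to extract $\varphi(d) = \varphi(d\omega)$ from $\varphi(e^\ast e) = 0$ and $\varphi(d) = \varphi(\omega d)$ from $\varphi(ee^\ast) = 0$ — the two one-sided vanishing conditions are each responsible for exactly one of the two module-type identities, and this is exactly why Definition~\ref{one-level-def} imposes both. For the converse direction the only subtlety is remembering that $\varphi(\unit_\D) = 1$ and carefully tracking adjoints when substituting $d = \omega^\ast$; everything else is linearity of $\varphi$.
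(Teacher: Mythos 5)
Your proof is correct and follows essentially the same route as the paper: Cauchy--Schwarz applied to $e=\unit_\D-\omega$ for the forward direction (with the two one-sided conditions $\varphi(e^\ast e)=0$ and $\varphi(ee^\ast)=0$ yielding, respectively, $\varphi(d)=\varphi(d\omega)$ and $\varphi(d)=\varphi(\omega d)$, exactly as you note), and direct expansion of $e^\ast e$ and $ee^\ast$ together with $\varphi(\omega)=1$ for the converse. Your bookkeeping of which side of Cauchy--Schwarz pairs with which hypothesis is accurate, so no gap remains.
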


\begin{proof}
Assume that $\varphi\in\mathscr{S}_1(\omega)$. Let $d\in\D$. Then, by Cauchy-Schwarz:
\begin{equation*}
\begin{split}
|\varphi(d)-\varphi(d\omega)| &= |\varphi(d(\unit_\D-\omega))|\\
&\leq \sqrt{\varphi(d d^\ast)\varphi((\unit_\D-\omega)^\ast(\unit_\D-\omega))} = 0\text{.}
\end{split}
\end{equation*}
The proof is identical to show that $\varphi(\omega d) = \varphi(d)$.

Conversely, assume given $\varphi\in\StateSpace(\D)$ such that $\varphi(d)=\varphi(d\omega)=\varphi(\omega d)$ for all $d\in\D$. Then in particular, $1=\varphi(\unit_\D) = \varphi(\omega\unit_\D) = \varphi(\omega)$. Moreover, $\varphi(\omega^\ast\omega) = \varphi(\omega^\ast) = 1$. Thus:
\begin{equation*}
\varphi((\unit_\D-\omega)^\ast(\unit_\D-\omega)) = 1 - \varphi(\omega) - \varphi(\omega^\ast) + \varphi(\omega^\ast\omega) = 0\text{.}
\end{equation*}
A similar computation shows $\varphi((\unit_\D-\omega)(\unit_\D-\omega)^\ast) = 0$, so $\varphi \in \mathscr{S}_1(\omega)$.
\end{proof}

\begin{remark}
Since, for any element $\omega$ of a unital C*-algebra $\D$, and for any state $\varphi$ in its $1$-level, we have $\varphi(\omega)=1$, any element with non-empty $1$-level has norm at least one.
\end{remark}

The fundamental notion on which our construction relies is:

\begin{definition}\label{bridge-def}
A \emph{bridge} $(\D,\omega,\pi_\A,\pi_\B)$ from a unital C*-algebra $\A$ to another unital C*-algebra $\B$ is a unital C*-algebra $\D$, an element $\omega \in \D$ with non-empty $1$-level, and two unital *-monomorphisms $\pi_\A : \A \hookrightarrow \D$ and $\pi_\B:\B \hookrightarrow \D$. The element $\omega$ is called the \emph{pivot} element of the bridge.
\end{definition}

\begin{notation}
For any two unital C*-algebras $\A$ and $\B$, the set of all bridges from $\A$ to $\B$ is denoted by $\bridgeset{\A}{\B}$.
\end{notation}

\begin{remark}
The theory developed in this section would carry over if we changed the notion of bridge slightly to $4$-tuples $(\B(\Hilbert),\omega,\pi_\A,\pi_\B)$, where $\Hilbert$ is a Hilbert space, $\pi_\A$ and $\pi_\B$ are faithful non-degenerate *-representations of $\A$ and $\B$, respectively, and $\omega$ is a possibly unbounded operator on $\Hilbert$ with non-empty $1$-level. On the other hand, we shall try, in our work, to minimize numerical quantities associated with bridges which, it would seem, would only get larger by allowing unbounded operators.
\end{remark}

\begin{remark}
We pause for a note on our terminology. A bridge is defined \emph{from} a unital C*-algebra \emph{to} another, which may appear needlessly asymmetric. However, as we noted in Remark (\ref{selfadjointpivot-rmk}), we find no justification to impose our pivot elements to be self-adjoint; thus our bridges are indeed asymmetric, and we prefer to emphasize this matter with our choice of vocabulary. In Proposition (\ref{triangle-prop}), we shall see that to any bridge from a space to another, we can associate an inverse bridge going in the opposite direction.
\end{remark}

To any bridge between unital two C*-algebras, we can associate a seminorm:
\begin{definition}\label{bridgenorm-def}
The \emph{seminorm of a bridge} $\gamma=(\D,\omega,\pi_\A,\pi_\B)$ from a unital C*-algebra $\A$ to a unital C*-algebra $\B$ is the seminorm $\bridgenorm{\gamma}{\cdot}$ on $\A\oplus\B$ defined for all $(a,b) \in\sa{\A\oplus\B}$ by:
\begin{equation*}
\bridgenorm{\gamma}{a,b} = \left\|\pi_\A(a)\omega - \omega\pi_\B(b)\right\|_\D \text{.}
\end{equation*}
\end{definition}

We easily check that, given two unital C*-algebras $\A$ and $\B$, for all $(a,b)$,$(c,d)$ in $\A\oplus\B$ and any bridge $\gamma = (\D,\omega,\pi_\A,\pi_\B)$ from $\A$ to $\B$, we have:
\begin{equation}\label{bridgenorm-eq}
\begin{split}
\bridgenorm{\gamma}{\left(a,b\right)\left(c,d\right)} &= \|\pi_\A(ac)\omega - \omega \pi_\B(bd)\|_\D\\
&= \|\pi_\A(a)\pi_\A(c) \omega - \pi_\A(a)\omega \pi_\B(d)\\
&\quad +  \pi_\A(a)\omega \pi_\B(d) - \omega \pi_\B(b)\pi_\B(d)\|_\D\\
&\leq \|a\|_\A \|\pi_\A(c)\omega - \omega \pi_\B(d)\|_\D+\|\pi_\A(a)\omega - \omega \pi_\B (b)\|_\D \|d\|_\B\\
&= \|a\|_\A \bridgenorm{\gamma}{c,d} + \bridgenorm{\gamma}{a,b}\|d\|_\B \text{.}
\end{split}
\end{equation}
Thus, $\bridgenorm{\gamma}{\cdot}$ satisfies a form of the Leibniz identity. Now, if $(\A,\Lip_\A)$ and $(\B,\Lip_\B)$ are two {\Lqcms s} and $r > 0$, and if we set:
\begin{equation*}
\Lip_r : (a,b) \in \sa{\A\oplus\B} \longmapsto \max \left\{ \Lip_\A(a),\Lip_\B(b), r \bridgenorm{\gamma}{a,b} \right\}
\end{equation*}
then $(\A\oplus\B,\Lip_r)$ is a unital Leibniz pair (which follows from $\bridgenorm{\gamma}{\unit_\A,0}=\|\omega\|_\D \geq 1$ by assumption). As we shall see later in Theorem (\ref{Leibniz-lip-thm}), with the right choice for $r$ (given by the notion of a length of a bridge, to be defined shortly), the seminorm $\Lip_r$ is in fact an admissible Lip-norm for $(\Lip_\A,\Lip_\B)$ in the sense of \cite{Rieffel00}, and thus $(\A\oplus\B,\Lip_r)$ becomes a {\Lqcms} which can be used to construct estimates for the quantum Gro\-mov-Haus\-dorff distance. This is the starting point for our idea of a new metric, and we will take advantage of this observation in Theorem (\ref{Leibniz-lip-thm}) below. Furthermore, if $\Lip_\A$ and $\Lip_\B$ are strong Leibniz, then $\Lip_r$ can easily be checked to be strong Leibniz, by \cite[Theorem 6.2]{Rieffel10c}. This kind of strong Leibniz Lip-norms appears explicitly in \cite{Rieffel09}. Now, in the last paragraph of \cite[Section 2]{Rieffel09}, the challenge of using such Lip-norms to derive interesting estimates for Rieffel's proximity or for Rieffel's quantum Gro\-mov-Haus\-dorff distance is raised. We propose a solution to this challenge in this paper.

\bigskip
The main question is now: given two {\Lqcms s} $(\A,\Lip_\A)$ and $(\B,\Lip_\B)$, how do we associate numerical values to a bridge as defined in Definition (\ref{bridge-def}), such that these values provide estimates for the quantum Gro\-mov-Haus\-dorff distance?

In order to define our distance, we start with the following sets.

\begin{notation}\label{mulip-notation}
For any Lipschitz pair $(\A,\Lip)$, we define:
\begin{equation*}
\Lipball{1}{\A,\Lip} = \{ a \in \sa{\A} : \Lip(a)\leq 1\}
\end{equation*}
and, as in Theorem (\ref{az-thm}), we also define:
\begin{equation*}
\mulip(\A,\Lip,\varphi) = \{ a \in \sa{\A} : \Lip(a)\leq 1\text{ and }\varphi(a) = 0\}
\end{equation*}
for any given $\varphi \in \StateSpace(\A)$.
\end{notation}

Using Theorem (\ref{az-thm}), we have the following key observation:

\begin{lemma}\label{az-lemma}
Let $(\A,\Lip)$ be a {\qcms} such that $\Lip$ is lower semi-continuous for the norm of $\A$. Then the set $\mulip(\A,\Lip,\varphi)$ is norm-compact in $\A$ for any $\varphi\in\StateSpace(\A)$.
\end{lemma}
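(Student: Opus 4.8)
The plan is to deduce this directly from Theorem (\ref{az-thm}) together with the hypothesis of lower semi-continuity. By part (2) or (3) of Theorem (\ref{az-thm}), since $(\A,\Lip)$ is a {\qcms}, the set $\mulip(\A,\Lip,\varphi)$ is totally bounded in the norm of $\A$ for every $\varphi \in \StateSpace(\A)$. Since $\sa{\A}$ is norm-complete (as $\A$ is a C*-algebra), total boundedness gives norm-compactness as soon as we know $\mulip(\A,\Lip,\varphi)$ is norm-closed. So the only thing to check is closedness, and this is exactly where lower semi-continuity enters.

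First I would fix $\varphi \in \StateSpace(\A)$ and take a sequence $(a_n)_{n\in\N}$ in $\mulip(\A,\Lip,\varphi)$ converging in norm to some $a \in \sa{\A}$. Since each $a_n$ satisfies $\varphi(a_n) = 0$ and $\varphi$ is norm-continuous (being a state, hence of norm one), we get $\varphi(a) = \lim_n \varphi(a_n) = 0$. Next, since $\Lip$ is lower semi-continuous with respect to $\|\cdot\|_\A$ and $\Lip(a_n) \leq 1$ for all $n$, we conclude $\Lip(a) \leq \liminf_n \Lip(a_n) \leq 1$. Hence $a \in \mulip(\A,\Lip,\varphi)$, which shows the set is norm-closed. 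Combined with total boundedness from Theorem (\ref{az-thm}) and completeness of $\sa{\A}$, this yields norm-compactness.

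There is no real obstacle here; the statement is essentially a packaging of known facts, and the only mild subtlety is making sure the convention that $\Lip$ is extended to a generalized seminorm (Convention (\ref{dom-convention})) is compatible with lower semi-continuity — i.e. that $\liminf$ of values $\leq 1$ stays $\leq 1$ even when some elements of the domain are involved, which is immediate. One could alternatively phrase the argument using nets or using the closedness of the sublevel set $\Lipball{r}{\A,\Lip}$ noted in Remark (\ref{closed-rmk}) intersected with the norm-closed affine hyperplane $\{a : \varphi(a)=0\}$, but the sequential argument above is the cleanest since separability is already available. I expect the author's proof to be a couple of lines along exactly these lines.
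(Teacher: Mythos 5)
Your proof is correct and follows essentially the same route as the paper: closedness of $\mulip(\A,\Lip,\varphi)$ from lower semi-continuity of $\Lip$ together with norm-continuity of $\varphi$, then total boundedness from Theorem (\ref{az-thm}), then completeness to conclude compactness. The paper simply phrases the closedness step via the norm-closedness of the sublevel set $\Lipball{1}{\A,\Lip}$ rather than a sequential argument, which is the alternative you yourself mention.
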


\begin{proof}
Fix $\varphi \in \StateSpace(\A)$. By lower semi-continuity of $\Lip$, the set $\Lipball{1}{\A,\Lip}$ is a norm-closed set in $\A$. Therefore, $\mulip(\A,\Lip,\varphi)$ is closed in $\A$ as well by continuity of $\varphi$. Hence, since $\A$ is complete, the space $\mulip(\A,\Lip,\varphi)$ is complete in norm. Now, $\mulip(\A,\Lip,\varphi)$ is totally bounded in $\A$ by Theorem (\ref{az-thm}) since $(\A,\Lip)$ is a {\qcms}. Thus, as a complete totally bounded space in the norm topology, $\mulip(\A,\Lip,\varphi)$ is norm-compact in $\A$.
\end{proof}

\bigskip

At the core of our construction is the notion of the \emph{length} of a bridge between {\Lqcms s}, which is build out of two numerical values, which we call the \emph{reach} and the \emph{height} of a bridge. We shall use:

\begin{notation}
For any metric space $(E,\mathsf{d})$, the Haus\-dorff distance induced by $\mathsf{d}$ on the set of all compact subsets of $E$ is denoted by $\Haus{\mathsf{d}}$. If $(E,\|\cdot\|_E)$ is a vector space and $\mathsf{d}$ is the distance induced by the norm on $E$, we denote $\Haus{\mathsf{d}}$ simply by $\Haus{E}$.
\end{notation}

As usual, if $E\subseteq \A$ for some algebra $\A$, and $a \in \A$, then $Ea = \{ ea : e\in E\}$ and $aE = \{ ae : e\in E \}$. 

Let $(\A,\Lip_\A)$ and $(\B,\Lip_\B)$ be two {\Lqcms s} and let $\gamma = (\D,\omega,\pi_\A,\pi_\B)$ be a bridge from $\A$ to $\B$. Fix $\varphi_\A \in \StateSpace(\A)$ and $\varphi_\B \in \StateSpace(\B)$. Since both $(\A,\Lip_\A)$ and $(\B,\Lip_\B)$ are {\Lqcms s}, by Lemma (\ref{az-lemma}), the sets:
\begin{equation*}
\pi_\A\left(\mulip\left(\A,\Lip_\A,\varphi_\A\right) \right)\omega\,\text{ and }\,
\omega \pi_\B\left(\mulip\left(\B,\Lip_\B,\varphi_\B\right) \right)
\end{equation*}
are norm-compact in $\D$ since $\pi_\A$ and $\pi_\B$ are continuous. Thus, their Haus\-dorff distance $\delta$ is finite. Let us denote it by $\delta$. Now, let $b \in \sa{\B}$ with $\Lip_\B(b)\leq 1$. Then there exists $a' \in \sa{\A}$ with $\Lip_\A(a')\leq 1$ and $\varphi_\A(a')=0$ such that:
\begin{multline*}
\bridgenorm{\gamma}{a',b-\varphi_\B(b)\unit_\A} =\\
\|\pi_\A(a')\omega - \omega\pi_\B(b-\varphi_\B(b)\unit_\B) \|_\D = \\ \min \{ \|\pi_\A(a)\omega - \omega\pi_\B(b-\varphi_\B(b)\unit_\B)\|_\D : a\in\mulip(\A,\Lip_\A,\varphi_\A) \} \leq \delta
\end{multline*}
by compactness. Thus:
\begin{equation*}
\bridgenorm{\gamma}{a'+\varphi_\B(b)\unit_\A,b}=\|\pi_\A(a'+\varphi_\B(b)\unit_\A)\omega - \omega\pi_\B(b)\|_\D\leq \delta\text{.}
\end{equation*}

As the situation is symmetric in $(\A,\Lip_\A)$ and $(\B,\Lip_\B)$, we conclude that the Haus\-dorff distance between the sets $\{\pi_\A(a)\omega:a\in\Lipball{1}{\A,\Lip_\A}\}$ and $\{\omega\pi_\B(b) : b\in \Lipball{1}{\B,\Lip_\B}\}$ is finite. With this in mind, we define:

\begin{definition}\label{bridgereach-def}
Let $(\A,\Lip_\A)$ and $(\B,\Lip_\B)$ be two {\qcms s}. The \emph{reach} of a bridge $\gamma=(\D,\omega,\pi_\A,\pi_\B)$ from $\A$ to $\B$ for $(\Lip_\A,\Lip_\B)$ is the non-negative real number $\bridgereach{\gamma}{\Lip_\A,\Lip_\B}$ defined by:
\begin{equation*}
\Haus{\D}\left(\pi_\A\left(\Lipball{1}{\A,\Lip_\A}\right)\omega,\omega \pi_\B\left(\Lipball{1}{\B,\Lip_\B}\right)\right)\text{.}
\end{equation*}
\end{definition}

Due to its central importance, we shall pause to provide a slightly different expression for the reach of a bridge. Let $(\A_1,\Lip_1)$ and $(\A_2,\Lip_2)$ be two {\qcms s} and let $\gamma = (\D,\omega,\pi_1,\pi_2)$ be a bridge between $\A_1$ and $\A_2$. The bridge seminorm was defined in Definition (\ref{bridgenorm-def}) by:
\begin{equation*}
(a,b) \in \sa{\A_1\oplus\A_2} \longmapsto \bridgenorm{\gamma}{a,b} = \| \pi_1(a)\omega-\omega\pi_2(b) \|_\D \text{.}
\end{equation*}
The reach of $\gamma$ can be expressed using the bridge seminorm as follows. First, we define for all $a\in\Lipball{1}{\A_1,\Lip_1}$:
\begin{equation*}
\bridgenorm{\gamma}{a,\Lipball{1}{\A_2,\Lip_2}} = \inf \{ \bridgenorm{\gamma}{a,b} : b \in \Lipball{1}{\A_2,\Lip_2} \}\text{,}
\end{equation*}
and similarly, we define $\bridgenorm{\gamma}{\Lipball{1}{\A_1,\Lip_1},b}$ for all $b\in\Lipball{1}{\A_2,\Lip_2}$.

Then we have by definition:
\begin{multline*}
\bridgereach{\gamma}{\Lip_1,\Lip_2} = \sup\{ \bridgenorm{\gamma}{a,\Lipball{1}{\A_2,\Lip_2}}, \bridgenorm{\gamma}{\Lipball{1}{\A_1,\Lip_1},b}:\\a\in\Lipball{1}{\A_1,\Lip_1},b\in\Lipball{1}{\A_2,\Lip_2} \}\text{.}
\end{multline*}

Thus, the reach of $\gamma$ measures how far apart our two {\qcms s} are \emph{in terms of the bridge seminorm}. The use of the bridge seminorm, rather than the norm in $\D$, allows us to ``cut-off'' elements in the Lip-balls of $\A$ and $\B$ by the pivot element $\omega$, which is our replacement for the sort of truncation arguments used in \cite{Rieffel00,Latremoliere05,Rieffel01} where strict order-unit subspaces of $\sa{\A}$ and $\sa{\B}$ would be involved.

\bigskip

Lip-norms of {\Lqcms s} are lower semi-cont\-inuous by assumption. Under this assumption, the following lemma shows that the minimization problem for the bridge seminorm over Lip-balls admits a solution, which will prove useful for the proof of  our main result, Theorem (\ref{main}).

\begin{lemma}\label{bridgereach-reached-lem}
Let $(\A,\Lip_\A)$ and $(\B,\Lip_\B)$ be two {\qcms s} such that $\Lip_\B$ is lower-semi-continuous. Let $\gamma\in\bridgeset{\A}{\B}$. Then, for all $a\in\dom{\Lip_\A}$ there exists $b \in \dom{\Lip_\B}$ such that:
\begin{equation}\label{bridgereach-reached-eq}
\bridgenorm{\gamma}{a,b} = \min \left\{ \bridgenorm{\gamma}{a,c} : c \in \dom{\Lip_\B} \text{ and }\Lip_\B(c)\leq \Lip_\A(a)  \right\}\text{.}
\end{equation}
\end{lemma}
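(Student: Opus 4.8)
The plan is to show the infimum defining the right-hand side of Equation (\ref{bridgereach-reached-eq}) is attained, by exhibiting the minimization as one over a norm-compact set. Fix $a \in \dom{\Lip_\A}$ and set $t = \Lip_\A(a)$; we may assume $t > 0$, since if $t = 0$ then $a \in \R\unit_\A$ and the only candidate is $c = 0$ (as $\Lip_\B(c) \leq 0$ forces $c \in \R\unit_\B$ and then any scalar works, all giving the same $\bridgenorm{\gamma}{a,c}$ up to choosing the right multiple of $\unit_\B$ — actually we should just take $c$ to be the scalar minimizing $\|\pi_\A(a)\omega - \omega\pi_\B(c)\|_\D$ over $c \in \R\unit_\B$, a minimization of a convex function over a line, hence attained). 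So assume $t > 0$.

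The key step is to reduce to a compact set. By homogeneity of the seminorms, write $c = s\unit_\B + c_0$ where we may fix a state $\varphi_\B \in \StateSpace(\B)$ and decompose any $c \in \dom{\Lip_\B}$ with $\Lip_\B(c) \leq t$ as $c = \varphi_\B(c)\unit_\B + (c - \varphi_\B(c)\unit_\B)$, with $c - \varphi_\B(c)\unit_\B \in t \cdot \mulip(\B,\Lip_\B,\varphi_\B)$. First I would handle the scalar part: for fixed $c_0$, the map $s \mapsto \|\pi_\A(a)\omega - \omega\pi_\B(c_0) - s\omega\|_\D$ is a norm on a line plus a translate, so it attains its minimum at some $s$ with $|s|$ bounded by, say, $\|a\|_\A\|\omega\|_\D^2 + \|c_0\|_\B\|\omega\|_\D^2$ divided by... — more simply, the minimizing $s$ satisfies $|s|\,\|\omega\|_\D \leq \|\pi_\A(a)\omega - \omega\pi_\B(c_0)\|_\D + \|\omega\pi_\B(c_0)\|_\D \leq \ldots$, which is bounded once $c_0$ ranges over the compact set $t\cdot\mulip(\B,\Lip_\B,\varphi_\B)$ (compact by Lemma (\ref{az-lemma}), using lower semicontinuity of $\Lip_\B$). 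Thus the minimization of $\bridgenorm{\gamma}{a,c}$ over $\{c \in \dom{\Lip_\B} : \Lip_\B(c) \leq t\}$ equals the minimization over the compact set
\begin{equation*}
K = \left\{ s\unit_\B + c_0 : c_0 \in t\cdot\mulip(\B,\Lip_\B,\varphi_\B),\ |s| \leq M \right\}
\end{equation*}
for a suitable finite $M$; and $c \mapsto \bridgenorm{\gamma}{a,c} = \|\pi_\A(a)\omega - \omega\pi_\B(c)\|_\D$ is norm-continuous on $\B$ since $\pi_\B$ is continuous. A continuous function on a nonempty compact set attains its minimum, giving the desired $b \in K \subseteq \dom{\Lip_\B}$.

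The main obstacle, I expect, is the argument that the scalar coefficient $s$ in the minimizing element can be taken bounded uniformly over $c_0$ — i.e. verifying that enlarging the search from a noncompact affine slice to a genuinely compact set does not change the infimum and that the infimum is achieved inside. This is where lower semicontinuity of $\Lip_\B$ is essential (it is what makes $\mulip(\B,\Lip_\B,\varphi_\B)$ compact via Lemma (\ref{az-lemma})), and one must be slightly careful that adding a scalar multiple of $\unit_\B$ does not change $\Lip_\B$ (true since $\Lip_\B$ is a seminorm vanishing on $\R\unit_\B$), so that every element of $K$ indeed has $\Lip_\B \leq t$. Everything else is routine: continuity of the bridge seminorm and compactness of a closed bounded slice of a finite-dimensional-modulo-scalars set.
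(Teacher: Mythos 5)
Your argument is correct and is essentially the paper's own proof in different packaging: both reduce to decomposing candidates as a scalar multiple of $\unit_\B$ plus an element of the norm-compact set $\mulip(\B,\Lip_\B,\varphi_\B)$ (compact by Lemma (\ref{az-lemma}), which is where lower semicontinuity enters), control the scalar uniformly using $\|\omega\|_\D\geq 1$, and conclude attainment --- the paper by extracting convergent subsequences from a minimizing sequence, you by minimizing the norm-continuous bridge seminorm over the compact set $K$. One cosmetic point: in the $\Lip_\A(a)=0$ case, attainment of the minimum on the line $\R\unit_\B$ follows from coercivity of $s\mapsto\|\pi_\A(a)\omega-s\omega\|_\D$ (again via $\|\omega\|_\D\geq 1$), not from convexity alone --- or, simpler, since $a$ is then itself a scalar multiple of $\unit_\A$, taking $b$ to be the same scalar multiple of $\unit_\B$ gives bridge norm zero, as in the paper.
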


\begin{proof}
We shall write $\gamma=(\D,\omega,\pi_\A,\pi_\B)$. Let $a\in\dom{\Lip_\A}$. If $a=t\unit_\A$ for some $t\in\R$, then $b = t\unit_\B$ satisfies Equality (\ref{bridgereach-reached-eq}). Assume $\Lip_\A(a)>0$ and let $a' = \Lip_\A(a)^{-1}a$. Fix $\varphi \in \StateSpace(\B)$. By Theorem (\ref{az-thm}), the set:
\begin{equation*}
\mulip(\B,\Lip_\B,\varphi) = \left\{ c \in \sa{\B} : \Lip_\B(c) \leq 1\text{ and } \varphi(c) =0 \right\}
\end{equation*}
introduced in Notation (\ref{mulip-notation}) is norm-compact by Lemma (\ref{az-lemma}).

Let:
\begin{equation*}
\alpha =  \inf \left\{ \bridgenorm{\gamma}{a',c} : c \in \dom{\Lip_\B} \text{ and }\Lip_\B(c)\leq 1  \right\}\text{,}
\end{equation*}
and note that:
\begin{multline*}
\inf \left\{ \bridgenorm{\gamma}{a,c} : c \in \dom{\Lip_\B} \text{ and }\Lip_\B(c)\leq \Lip_\A(a)  \right\} \\ = \Lip_\A(a)\inf \left\{ \bridgenorm{\gamma}{a',c} : c \in \dom{\Lip_\B} \text{ and }\Lip_\B(c)\leq 1  \right\}=\Lip_\A(a)\alpha\text{.}
\end{multline*}

Let $n\in \N, n>0$. By definition of $\alpha$, there exists $b_n \in \mulip(\B,\Lip_\B,\varphi)$ and $t_n \in \R$ such that:
\begin{equation*}
\alpha\leq \bridgenorm{\gamma}{a',b_n+t_n\unit_\B} \leq \alpha  + \frac{1}{n} \text{.}
\end{equation*}
Since $(b_n)_{n\in\N}$ is a sequence in the norm compact set $\mulip(\B,\Lip_\B,\varphi)$, it admits a norm convergent subsequent, which we denote by $(b_{\mu(n)})_{n\in\N}$, with limit some $b \in \mulip(\B,\Lip_\B,\varphi)$. Now, for any $n\in\N$, and since $\|\omega\|_\D\geq 1$ by Definitions (\ref{bridge-def}) and (\ref{one-level-def}), we have:
\begin{equation}\label{bridgereach-reached-eq1}
\begin{split}
|t_n| &\leq \|\omega\|_\D|t_n| \\
&\leq \bridgenorm{\gamma}{a',b_n+t_n\unit_\B} + \bridgenorm{\gamma}{a',b_n}\\
&\leq \alpha + 1 + \bridgenorm{\gamma}{a',b_n} \text{.}
\end{split}
\end{equation}
Now, 
$\left(\bridgenorm{\gamma}{a',b_{\mu(n)}}\right)_{n\in\N}$ converges in norm and is thus bounded. Consequently, Inequality (\ref{bridgereach-reached-eq1}) shows that $(t_{\mu(n)})_{n\in\N}$ is a bounded sequence of real numbers, which must thus admits a convergent sub-sequence, denoted by $(t_{\nu(n)})_{n\in\N}$, whose limit is denoted by $t$. Thus, we have:
\begin{equation*}
\begin{split}
\alpha&\leq 
\bridgenorm{\gamma}{a',b+t \unit_\B}\\
&= \lim_{n\rightarrow\infty} 
\bridgenorm{\gamma}{a',b_{\nu(n)}+t_{\nu(n)}}\\
&\leq \lim_{n\rightarrow\infty}\left(\alpha + \frac{1}{\nu(n)}\right) = \alpha \text{.}
\end{split}
\end{equation*}
Thus $\Lip_\A(a)(b+t \unit_\B)$ satisfies Equality (\ref{bridgereach-reached-eq}) as desired.
\end{proof}

The reach of the bridge is the numerical value we shall use to measure how far apart two {\Lqcms s} are from each other for a given bridge. Let us pause to give a metaphor for the ideas behind the notion of reach and the upcoming notion of a height of a bridge. The reach, using a physical bridge as a metaphor, measures the size of the ``span of the bridge''. However, in order to cross one of our bridge, it is necessary to be able to get to its span: a bridge is only as useful, after all, as you can actually get to it. If we wish to bring the state spaces of our two {\Lqcms s} close together using a bridge, then, in addition to the crossing span of the bridge, we also must know how far any given state is to the bridge. This other quantity is measured by our second core numerical value associated to a bridge, and is an essential component in estimating how useful a given bridge is to compute estimates on distances between states.  We call this second quantity the \emph{height} of the bridge. The metaphor of a bridge can be seen now as follows: a bridge is a span which goes from the $1$-level set of the pivot element in one {\Lqcms} to the $1$-level set of the pivot element in the other {\Lqcms} (this could be our horizontal component of the bridge); to ``travel'' from one state space to another, one then must climb an ``access ramp'', to get to the $1$-level set of the pivot, before crossing the bridge, and this climb size is measured by the height of the bridge.

Somewhat more formally, given a bridge $\gamma = (\D,\omega,\pi,\rho)$ from some {\Lqcms} to another, we wish to know how far $\omega$ is from the identity of $\D$, in some sense. If $\omega = \unit_\D$, then the bridge norm for $\gamma$ is given by the norm in $\D$, and it becomes easy to compute estimates of the distance between states based on the bridge seminorm. In general, we would want $\omega$ to be close enough to $\unit_\D$ to still give relevant estimates. An appropriate choice is to use the quantum metric structures to introduce a weak form a measurement of the difference between $\omega$ and $\unit_\D$ which fits our purpose. This second quantity associated to bridges is based on the $1$-level introduced in Definition (\ref{one-level-def}), which is never empty for a bridge.

\begin{definition}\label{bridgeheight-def}
Let $(\A,\Lip_\A)$ and $(\B,\Lip_\B)$ be two {\qcms s} and $\gamma=(\D,\omega,\pi_\A,\pi_\B)$ be a bridge from $\A$ to $\B$. Let:
\begin{equation*}
\StateSpace(\A|\gamma) = \left\{ \varphi \circ \pi_\A : \varphi \in \mathscr{S}_1(\omega) \right\} 
\end{equation*}
and similarly:
\begin{equation*}
\StateSpace(\B|\gamma) = \left\{ \varphi \circ \pi_\B : \varphi \in \mathscr{S}_1(\omega) \right\}\text{.} 
\end{equation*}

The \emph{height} $\bridgeheight{\gamma}{\Lip_\A,\Lip_\B}$ of the bridge $\gamma$ for $(\Lip_\A,\Lip_\B)$ is the real number defined by:
\begin{equation*}
\max \left\{ \Haus{\Kantorovich{\Lip_\A}}(\StateSpace(\A),\StateSpace(\A|\gamma)),  \Haus{\Kantorovich{\Lip_\B}}(\StateSpace(\B),\StateSpace(\B|\gamma)) \right\}\text{.}
\end{equation*}
\end{definition}


\bigskip

For our purpose, the proper notion of the length of a bridge is given by:

\begin{definition}\label{bridgelength-def}
Let $(\A,\Lip_\A)$ and $(\B,\Lip_\B)$ be two {\Lqcms s}. The \emph{length} of a bridge $\gamma$ for $(\Lip_\A,\Lip_\B)$ is the non negative real number $\bridgelength{\gamma}{\Lip_\A,\Lip_\B}$ defined by:
\begin{equation*}
\max \{ \bridgereach{\gamma}{\Lip_\A,\Lip_\B}, \bridgeheight{\gamma}{\Lip_\A,\Lip_\B} \}\text{.}
\end{equation*}
\end{definition}

We wish to use our notions of bridge reach and height to build a metric on {\Lqcms s}. The natural idea is to consider, given two {\Lqcms s} $(\A,\Lip_\A)$ and $(\B,\Lip_\B)$, the infimum of the reach, the height or the length over all possible bridges from $\A$ to $\B$. All three ideas, however, do not quite give a satisfactory answer. The infimum of the reach would be zero in too many cases, as checked on very simple examples. The idea of the height of a bridge $(\D,\omega,\pi_\A,\pi_\B)$ is precisely to ensure that, in some sense, $\omega$ is close to the unit of $\D$ for the purpose of constructing a distance rather than a pseudo-distance. Without this, the reach is essentially meaningless in general. However, the height does not satisfy any triangle inequality. Indeed, if one is given three {\Lqcms s} $(\A_1,\Lip_1)$, $(\A_2,\Lip_2)$ and $(\A_3,\Lip_3)$ and two bridges $(\D_1,\omega_1,\pi_1,\rho_2)$ and $(\D_2,\omega_2,\pi_2,\rho_2)$, respectively from $\A_1$ to $\A_2$ and from $\A_2$ to $\A_3$, then a natural way to construct a bridge from $\A_1$ to $\A_3$ is to consider the amalgamated free product $\mathfrak{E} = \D_1\star_{\A_2}\D_2$ \cite{Blackadar78}. If $\iota_j: \D_j \rightarrow \mathfrak{E}$ are the canonical *-monomorphisms constructed in \cite{Blackadar78}, then the obvious candidate for an ``amalgamated bridge'' is $(\mathfrak{E},\omega_1\omega_2,\iota_1\circ \pi_1,\iota_2\circ\rho_2)$. Such a construction, for instance, was used by Li in \cite{li03} for a similar purpose in dealing with the nuclear distance. However, two problems arise. First, we have no reason to expect $\omega_1\omega_2 \not = 0$, let alone $\|\omega_1\omega_2\|_{\mathfrak{E}} \geq 1$. Second of all, even under the best circumstances, the height of this amalgamated bridge would most likely grow by an unpredictable amount, as it is very unclear how to relate the $1$-level set of $\omega_1\omega_2$ with the $1$-level sets of $\omega_1$ and $\omega_2$. This construction, though the natural path to attempt a proof of the triangle inequality for the natural ideas of metrics introduced at the start of this paragraph, would thus fail to provide the desired estimates. However, we shall still work with the same general idea, yet enforcing the triangle inequality in our very definition.

\bigskip

To address this problem, we introduce the notion of a (finite) path between two {\qcms s}, built with bridges. The bridges in such paths involve intermediate {\qcms s}, and we must choose which class of {\qcms s} we wish to allow in this construction. This choice actually is a nice feature of our construction, as it allows us to adapt the idea of the quantum propinquity to various settings, as we shall see later in this section.

\begin{notation}
The set $\N\setminus\{0\}$ of nonzero natural numbers is denoted by $\N_+$.
\end{notation}

\begin{notation}
Whenever convenient, $n$-tuples $(a_1,\ldots,a_n)$ will be denoted by:
\begin{equation*}
(a_j : j=1,\ldots, n)\text{.}
\end{equation*}
Moreover, if $a_1,\ldots,a_n$ are themselves tuples, we will drop the parentheses around each $a_j$ ($j=1,\ldots,n)$ to simplify our notation further.
\end{notation}

\begin{definition}\label{trek-def}
Let $\mathcal{C}$ be a nonempty class of {\qcms s} and let $(\A,\Lip_\A)$ and $(\B,\Lip_\B)$ be two {\qcms s} in $\mathcal{C}$. A \emph{$\mathcal{C}$-trek} from $(\A,\Lip_\A)$ to $(\B,\Lip_\B)$ is a $n$-tuple:
\begin{equation*}
( \A_j,\Lip_j,\gamma_j,\A_{j+1},\Lip_{j+1} : j=1,\ldots,n )
\end{equation*}
for some $n\in \N_+$, where $(\A_1,\Lip_1)=(\A,\Lip_\A)$, $(\A_{n+1},\Lip_{n+1}) = (\B,\Lip_\B)$, while for all $j = \{ 1,\ldots,n\}$, the pair $(\A_j,\Lip_j)$ is a {\qcms} in $\mathcal{C}$ and $\gamma_j$ is a bridge from $\A_j$ to $\A_{j+1}$.
\end{definition}

\begin{notation}
Let $\mathcal{C}$ be a nonempty class of {\qcms s}. For any two {\qcms s} $(\A,\Lip_\A)$ and $(\B,\Lip_\B)$ in $\mathcal{C}$, the set of all $\mathcal{C}$-treks from $(\A,\Lip_\A)$ to $(\B,\Lip_\B)$ is denoted by:
\begin{equation*}
\startrekset{\mathcal{C}}{\A,\Lip_\A}{\B,\Lip_\B}\text{.}
\end{equation*}
\end{notation}

Our main interest will lie with the class $\LCQMS$ of {\Lqcms s} introduced in Notation (\ref{lcqmss-not}). We have two reasons to introduce the more general notion of a trek associated to some class of {\qcms s}. First, this will allow us to make clear, in our exposition, when the Leibniz property plays an important role, by stating when a given result is proven using treks along general classes of {\qcms s} or when it uses sub-classes of {\Lqcms s}. Second, we may use treks along sub-classes of {\Lqcms s} to define various mild strengthening of our quantum propinquity, which may better fit some specific situations. Thus, our terminology will make it easy to incorporate all these variants into one theory.

We can thus define the length of a trek:

\begin{definition}\label{treklength-def}
The length $\treklength{\Gamma}$ of a trek $\Gamma = ( \A_j,\Lip_j,\gamma_j,\A_{j+1},\Lip_{j+1} : j=1,\ldots,n )$ is the non-negative real number:
\begin{equation*}
\treklength{\Gamma} = \sum_{j = 1}^n \bridgelength{\gamma_j}{\Lip_j,\Lip_{j+1}} \text{.}
\end{equation*}
\end{definition}

\section{The Quantum Propinquity}

We now define the core concept of this paper:


\begin{definition}\label{propinquity-def}
Let $\mathcal{C}$ be a nonempty class of a {\qcms s}. The \emph{quantum Gro\-mov-Haus\-dorff $\mathcal{C}$-propinquity} between two {\qcms s} $(\A,\Lip_\A)\in \mathcal{C}$ and $(\B,\Lip_\B)\in \mathcal{C}$  is the non-negative real number:
\begin{equation*}
\propinquity_{\mathcal{C}}((\A,\Lip_\A),(\B,\Lip_\B)) = \inf \left\{ \treklength{\Gamma} : \Gamma \in \startrekset{\mathcal{C}}{\A,\Lip_\A}{\B,\Lip_\B} \right\} \text{.}
\end{equation*}
\end{definition}

We shall often refer to the quantum Gro\-mov-Haus\-dorff $\mathcal{C}$-propinquity simply as the quantum $\mathcal{C}$-propinquity. Our paper is mostly devoted to the quantum $\LCQMS$-propinquity $\propinquity_{\LCQMS}$, and thus we introduce a specific notation and terminology for the associated quantum propinquity:
\begin{definition}
The \emph{quantum Gro\-mov-Haus\-dorff propinquity} between two {\Lqcms s} $(\A,\Lip_\A)$ and $(\B,\Lip_\B)$  is the non-negative real number  $\propinquity((\A,\Lip_\A),(\B,\Lip_\B))$ defined as:
\begin{equation*}
\inf \left\{ \treklength{\Gamma} : \Gamma \in \startrekset{\LCQMS}{\A,\Lip_\A}{\B,\Lip_\B} \right\} \text{.}
\end{equation*}
\end{definition}

The quantum Gro\-mov-Haus\-dorff propinquity $\propinquity$ is the main subject of this paper. However, our definition allows for some refinements of the following type:
\begin{example}
A natural choice of a subclass $\mathcal{C}$ of {\Lqcms s} is the class $\mathcal{C}^\ast$ of compact C*-metric spaces introduced by Rieffel in \cite{Rieffel10c}, which are {\Lqcms s} whose Lip-norms are the restrictions of densely defined, lower-semi continuous strong Leibniz seminorms. We can thus work with $\propinquity_{\mathcal{C}^\ast}$ instead of $\propinquity$ if one wishes to ensure all {\Lqcms s} involved in treks are, in fact, compact C*-metric spaces.
\end{example}

More generally, our theory is particularly adapted to any nonempty subclass of {\Lqcms s} which may be relevant to a particular application. In general, the motivation to choose a subclass of {\Lqcms s} would be to study the behavior for the quantum propinquity of some structure which is only well-defined over some {\Lqcms s}. Nonetheless, it should also be noted that many results of this section do not require the Leibniz property, and in fact our main Theorem (\ref{main}) could be adapted to other classes of {\qcms s} for which the Lip-norms are well-behaved with respect to the multiplication. Our framework would allow for such extensions to be quite routine, if they ever prove desirable.

Another direction in which one could specialize our construction of the quantum propinquity would be to make additional requirements on the pivot elements of the bridges involved in admissible treks. For instance, one could impose a uniform bound on the norm of these pivot elements, or that these pivot elements should be self-adjoint. This shows that our construction allows for quite some flexibility and adaptability to future needs in noncommutative metric geometry.

\begin{remark}
We note that for any nonempty class $\mathcal{C}$ of {\Lqcms s}, the quantum $\mathcal{C}$-propinquity $\propinquity_{\mathcal{C}}$ dominates the quantum propinquity, and that the quantum propinquity $\propinquity$ is also, with our notation, $\propinquity_{\LCQMS}$. 
\end{remark}

\bigskip

Our purpose is to prove that the quantum propinquity is a distance on classes of isometrically isomorphic {\Lqcms s}. We first check that our quantum propinquity is always real-valued.

\begin{notation}
We denote the diameter of a metric space $(E,\mathsf{d})$ by $\diam{E}{\mathsf{d}}$.
\end{notation}

\begin{proposition}\label{first-bridge-prop}
Let $\mathcal{C}$ be a nonempty class of {\qcms s}. Let $(\A,\Lip_\A)$ and $(\B,\Lip_\B)$ be two {\qcms s} in $\mathcal{C}$. Then:
\begin{equation*}
\propinquity_{\mathcal{C}}((\A,\Lip_\A),(\B,\Lip_\B)) \leq \max\{ \diam{\StateSpace(\A)}{\Kantorovich{\Lip_\A}}, \diam{\StateSpace(\B)}{\Kantorovich{\Lip_\B}} \}\text{.}
\end{equation*}
\end{proposition}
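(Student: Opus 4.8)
The plan is to exhibit a single explicit bridge from $\A$ to $\B$ (hence a one-step trek) whose length is bounded by the right-hand side, and then use that the quantum $\mathcal{C}$-propinquity is an infimum over all treks. Since neither $\A$ nor $\B$ need be unital subalgebras of a common algebra, the natural choice is to take $\D = \A\oplus\B$, with $\pi_\A$ and $\pi_\B$ the canonical unital inclusions $a\mapsto (a,0_\B)$ and $b\mapsto (0_\A,b)$, and pivot element $\omega = \unit_\D = (\unit_\A,\unit_\B)$. One checks immediately that $\omega$ has non-empty $1$-level: indeed, for any state $\varphi_\A$ on $\A$ and $\varphi_\B$ on $\B$, the state $\varphi = \frac{1}{2}(\varphi_\A\oplus\varphi_\B)$ (or any state at all, in fact, since $\unit_\D - \omega = 0$) satisfies the defining equations of $\mathscr{S}_1(\omega)$ trivially. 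So $\gamma = (\A\oplus\B,\unit_\D,\pi_\A,\pi_\B)$ is a genuine bridge from $\A$ to $\B$, and it gives a one-step $\mathcal{C}$-trek $\Gamma$ from $(\A,\Lip_\A)$ to $(\B,\Lip_\B)$ provided $\A\oplus\B$, equipped with a suitable Lip-norm, lies in $\mathcal{C}$ --- but actually, inspecting Definition \ref{trek-def}, the trek only requires the \emph{endpoints} and any intermediate {\qcms s} to lie in $\mathcal{C}$; a one-step trek has no intermediate spaces, so $\Gamma \in \startrekset{\mathcal{C}}{\A,\Lip_\A}{\B,\Lip_\B}$ with no further hypotheses.

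Next I would compute the length $\bridgelength{\gamma}{\Lip_\A,\Lip_\B} = \max\{\bridgereach{\gamma}{\Lip_\A,\Lip_\B},\bridgeheight{\gamma}{\Lip_\A,\Lip_\B}\}$. For the height: since $\omega = \unit_\D$, the $1$-level $\mathscr{S}_1(\unit_\D)$ is all of $\StateSpace(\A\oplus\B)$, and every state of $\A\oplus\B$ restricts via $\pi_\A$ to \emph{every} state of $\A$ (take $\varphi\oplus 0$ on $\A\oplus\B$), so $\StateSpace(\A|\gamma) = \StateSpace(\A)$ and likewise $\StateSpace(\B|\gamma) = \StateSpace(\B)$. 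Hence both Hausdorff distances in Definition \ref{bridgeheight-def} are zero, and $\bridgeheight{\gamma}{\Lip_\A,\Lip_\B} = 0$. For the reach: with $\omega = \unit_\D$ the bridge seminorm is $\bridgenorm{\gamma}{a,b} = \|\pi_\A(a) - \pi_\B(b)\|_\D = \|(a,-b)\|_{\A\oplus\B} = \max\{\|a\|_\A,\|b\|_\B\}$, and $\bridgereach{\gamma}{\Lip_\A,\Lip_\B}$ is the Hausdorff distance in $\D$ between $\{(a,0): a\in\Lipball{1}{\A,\Lip_\A}\}$ and $\{(0,b): b\in\Lipball{1}{\B,\Lip_\B}\}$. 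These sets are unbounded in general, so I must instead use the reformulation of the reach recorded just after Definition \ref{bridgereach-def} in terms of the bridge seminorm over Lip-balls: for $a\in\Lipball{1}{\A,\Lip_\A}$, writing $a_0 = a - \varphi_\A(a)\unit_\A$ for a fixed state $\varphi_\A$, we have $\|a_0\|_\A \leq \diam{\StateSpace(\A)}{\Kantorovich{\Lip_\A}}$ by the standard estimate (e.g. implicit in Theorem \ref{az-thm}(4)), and similarly on the $\B$ side; choosing $b$ to be the matching scalar multiple of $\unit_\B$, one sees $\bridgenorm{\gamma}{a,\Lipball{1}{\B,\Lip_\B}} \leq \diam{\StateSpace(\B)}{\Kantorovich{\Lip_\B}}$ and symmetrically, so $\bridgereach{\gamma}{\Lip_\A,\Lip_\B} \leq \max\{\diam{\StateSpace(\A)}{\Kantorovich{\Lip_\A}},\diam{\StateSpace(\B)}{\Kantorovich{\Lip_\B}}\}$.

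Putting these together, $\treklength{\Gamma} = \bridgelength{\gamma}{\Lip_\A,\Lip_\B} = \bridgereach{\gamma}{\Lip_\A,\Lip_\B} \leq \max\{\diam{\StateSpace(\A)}{\Kantorovich{\Lip_\A}},\diam{\StateSpace(\B)}{\Kantorovich{\Lip_\B}}\}$, and taking the infimum in Definition \ref{propinquity-def} over all treks gives the claimed bound. The main obstacle, and the only step requiring genuine care, is the reach computation: one cannot directly take the Hausdorff distance between the two Lip-ball images in the norm of $\D$ (those sets are unbounded), so the argument must go through the bridge-seminorm reformulation, exploiting that adding scalar multiples of the unit is free for the Lip-norm and that norms of state-space-centered Lipschitz elements are controlled by the diameter of the state space. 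Everything else --- non-emptiness of the $1$-level, vanishing of the height, and membership of the one-step trek in the relevant trek set --- is immediate from the definitions.
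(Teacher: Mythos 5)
There is a genuine gap, and it sits at the very first step: your proposed $\gamma = (\A\oplus\B,\unit_{\A\oplus\B},\pi_\A,\pi_\B)$ with $\pi_\A : a\mapsto (a,0_\B)$ and $\pi_\B : b \mapsto (0_\A,b)$ is not a bridge in the sense of Definition (\ref{bridge-def}), because that definition requires the *-monomorphisms to be \emph{unital}, whereas $\pi_\A(\unit_\A) = (\unit_\A,0_\B) \neq (\unit_\A,\unit_\B) = \unit_{\A\oplus\B}$ (and similarly for $\pi_\B$). This is not a cosmetic requirement: it is exactly what makes the reach estimate work, and your reach computation fails for the same reason. With your embeddings and $\omega = \unit_{\A\oplus\B}$, the two copies are orthogonal, so $\pi_\A(a)\omega - \omega\pi_\B(b) = (a,-b)$ and $\bridgenorm{\gamma}{a,b} = \max\{\|a\|_\A,\|b\|_\B\}$, as you yourself note. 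Consequently, for $a = t\unit_\A$ with $|t|$ large (which lies in $\Lipball{1}{\A,\Lip_\A}$ since $\Lip_\A(t\unit_\A)=0$), one has $\bridgenorm{\gamma}{a,b} \geq |t|$ for \emph{every} $b$, so the reach of your bridge is infinite. The ``matching scalar multiple of $\unit_\B$'' trick you invoke only produces a cancellation when the two units are identified in the ambient algebra, i.e.\ when $\pi_\A(\unit_\A) = \pi_\B(\unit_\B)$; in the direct sum with coordinate embeddings there is no such cancellation, and passing to the ``bridge-seminorm reformulation'' of the reach does not help, since that reformulation is the same quantity.

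The paper's proof repairs exactly this point: it embeds $\A$ and $\B$ \emph{unitally} into a common algebra, taking faithful non-degenerate representations $\pi_\A,\pi_\B$ on a single separable Hilbert space $\Hilbert$, with $\D = \B(\Hilbert)$ and pivot $\omega = \operatorname{id}_\Hilbert$. Then the height is zero (every state of $\A$ or $\B$ extends to a state of $\B(\Hilbert)$, and $\mathscr{S}_1(\omega)$ is all of $\StateSpace(\B(\Hilbert))$), and for $a$ with $\Lip_\A(a)\leq 1$ the choice $b = \varphi_\A(a)\unit_\B$ gives $\pi_\A(a)\omega - \omega\pi_\B(b) = \pi_\A(a - \varphi_\A(a)\unit_\A)$, whose norm is at most $\diam{\StateSpace(\A)}{\Kantorovich{\Lip_\A}}$ by the standard estimate you cite; this is where unitality is used. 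Your overall strategy (one explicit bridge, height zero, reach bounded by the diameter via centering at a state) is the right one, and it could be salvaged by replacing $\A\oplus\B$ with any unital common target --- e.g.\ $\B(\Hilbert)$ as in the paper, or $\A\otimes\B$ with $a\mapsto a\otimes\unit_\B$, $b\mapsto\unit_\A\otimes b$ --- but as written the construction does not satisfy the definition of a bridge and yields an infinite reach.
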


\begin{proof}
Let:
\begin{equation*}
D = \max\{ \diam{\StateSpace(\A)}{\Kantorovich{\Lip_\A}}, \diam{\StateSpace(\B)}{\Kantorovich{\Lip_\B}} \}\text{.}
\end{equation*}
Let $\Hilbert$ be a separable, infinite dimensional Hilbert space. Since $\A$ and $\B$ are separable, there exist faithful non-degenerate *-representations $\pi_\A$ and $\pi_\B$ of, respectively, $\A$ and $\B$ on $\Hilbert$. Let $\Omega$ be the C*-algebra of all bounded operators on $\Hilbert$ and let $\omega$ be the identity operator on $\Hilbert$. Let $\gamma = (\Omega,\omega,\pi_\A,\pi_\B)$. First, note that the $1$-level set $\mathscr{S}_1(\omega)$ is $\StateSpace(\Omega)$, and thus is nonempty. By Definition (\ref{bridge-def}), $\gamma$ is a bridge from $\A$ to $\B$. Furthermore, since $\StateSpace(\pi_\A)$ and $\StateSpace(\pi_\B)$ are both surjective, by Definition (\ref{bridgeheight-def}), we have $\bridgeheight{\gamma}{\Lip_\A,\Lip_\B} = 0$.

Let $\varphi_\A \in \StateSpace(\A)$. For all $\psi \in \StateSpace(\A)$ and for all $a\in\dom{\Lip_\A}$, we have:
\begin{multline*}
|\psi(a-\varphi_\A(a)\unit_\A)| = |\psi(a) - \varphi_\A(a)| \leq \Lip_\A(a)\Kantorovich{\Lip_\A}(\psi,\varphi_\A) \\ \leq \Lip_\A(a) \diam{\StateSpace(\A)}{\Kantorovich{\Lip}}\leq \Lip_\A(a)D \text{,}
\end{multline*}
hence $\|a-\varphi_\A(a)\unit_\A\|_\A \leq \Lip_\A(a)D$.

Let $a\in\dom{\Lip_\A}$ with $\Lip_\A(a)\leq 1$. Let $b = \varphi_\A(a)\unit_\B$ and note that $\Lip_\B(b) = 0\leq 1$. Then:
\begin{equation*}
\|\pi_\A(a)\omega - \omega\pi_\B(b)\|_\Omega = \|a-\varphi(a)\unit_\A\|_\A \leq D\text{.}
\end{equation*}
Similarly, given any $\varphi_\B \in \StateSpace(\B)$, we have $\|\pi_\A(\varphi_\B(b)\unit_\A)\omega-\omega\pi_\B(b)\|\leq D$ for all $b\in\dom{\Lip_\B}$ with $\Lip_\B(b)\leq 1$. Hence $\bridgereach{\gamma}{\Lip_\A,\Lip_\B} \leq D$ by Definition (\ref{bridgereach-def}).

Therefore, by Definition (\ref{bridgelength-def}), we have:
\begin{equation*}
\bridgelength{\gamma}{\Lip_\A,\Lip_\B} = \max\{\bridgereach{\gamma}{\Lip_\A,\Lip_\B}, \bridgeheight{\gamma}{\Lip_\A,\Lip_\B}\} \leq D\text{.}
\end{equation*}

Let $\Gamma = ((\A,\Lip_\A,\gamma,\B,\Lip_\B))$, which is a $\mathcal{C}$-trek from $(\A,\Lip_\A)$ to $(\B,\Lip_\B)$ with the same length as the bridge $\gamma$ for $(\Lip_\A,\Lip_\B)$. Thus by Definitions (\ref{treklength-def}) and (\ref{propinquity-def}), we have:
\begin{equation*}
\propinquity_{\mathcal{C}}((\A,\Lip_\A),(\B,\Lip_\B)) \leq \treklength{\Gamma} \leq D \text{,}
\end{equation*}
as desired.
\end{proof}

Now, we check that by construction, symmetry and the triangle inequality are satisfied:

\begin{proposition}\label{triangle-prop}
Let $\mathcal{C}$ be a nonempty class of {\qcms s}. Let $(\A,\Lip_\A)$, $(\B,\Lip_\B)$ and $(\D,\Lip_\D)$ be three elements of $\mathcal{C}$. We have:
\begin{enumerate}
\item $\propinquity_{\mathcal{C}}((\A,\Lip_\A),(\B,\Lip_\B)) = \propinquity_{\mathcal{C}}((\B,\Lip_\B),(\A,\Lip_\A))$,
\item $\propinquity_{\mathcal{C}}((\A,\Lip_\A),(\D,\Lip_\D)) \leq \propinquity_{\mathcal{C}}((\A,\Lip_\A),(\B,\Lip_\B)) + \propinquity_{\mathcal{C}}((\B,\Lip_\B),(\D,\Lip_\D))$.
\end{enumerate}
\end{proposition}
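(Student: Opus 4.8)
\textbf{Proof plan for Proposition \ref{triangle-prop}.}

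The plan is to prove both assertions directly from the definitions of trek, trek length, and the quantum propinquity, together with an explicit construction of an inverse bridge. For the symmetry assertion (1), I would first establish that to every bridge $\gamma = (\D,\omega,\pi_\A,\pi_\B)$ from $\A$ to $\B$ one can associate an \emph{inverse bridge} $\gamma^{-1}$ from $\B$ to $\A$ with the same length for the reversed pair $(\Lip_\B,\Lip_\A)$. The natural candidate is to keep the same C*-algebra $\D$ and the same inclusions, but replace the pivot $\omega$ by $\omega^\ast$ and swap the roles of $\pi_\A$ and $\pi_\B$, i.e. $\gamma^{-1} = (\D,\omega^\ast,\pi_\B,\pi_\A)$. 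One checks immediately that $\mathscr{S}_1(\omega^\ast) = \mathscr{S}_1(\omega)$ from Definition \ref{one-level-def} (the two defining conditions are simply interchanged), so $\gamma^{-1}$ is again a bridge; the height of $\gamma^{-1}$ for $(\Lip_\B,\Lip_\A)$ equals the height of $\gamma$ for $(\Lip_\A,\Lip_\B)$ since $\StateSpace(\B\,|\,\gamma^{-1}) = \StateSpace(\B\,|\,\gamma)$ and likewise for $\A$. For the reach, note that $\bridgenorm{\gamma^{-1}}{b,a} = \|\pi_\B(b)\omega^\ast - \omega^\ast\pi_\A(a)\|_\D = \|(\pi_\A(a)\omega - \omega\pi_\B(b))^\ast\|_\D = \bridgenorm{\gamma}{a,b}$, using that $\pi_\A,\pi_\B$ are $\ast$-homomorphisms and $\ast$ is an isometric anti-automorphism of $\D$; hence the two Hausdorff-distance expressions in Definition \ref{bridgereach-def} coincide. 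Therefore $\bridgelength{\gamma^{-1}}{\Lip_\B,\Lip_\A} = \bridgelength{\gamma}{\Lip_\A,\Lip_\B}$.

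Given this, reversing a $\mathcal{C}$-trek $\Gamma = (\A_j,\Lip_j,\gamma_j,\A_{j+1},\Lip_{j+1} : j=1,\dots,n)$ from $(\A,\Lip_\A)$ to $(\B,\Lip_\B)$ yields a $\mathcal{C}$-trek $\Gamma^{-1} = (\A_{n+2-j},\Lip_{n+2-j},\gamma_{n+1-j}^{-1},\A_{n+1-j},\Lip_{n+1-j} : j=1,\dots,n)$ from $(\B,\Lip_\B)$ to $(\A,\Lip_\A)$, and by Definition \ref{treklength-def} together with the length identity just proved, $\treklength{\Gamma^{-1}} = \treklength{\Gamma}$. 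Since $\Gamma \mapsto \Gamma^{-1}$ is an involutive bijection between $\startrekset{\mathcal{C}}{\A,\Lip_\A}{\B,\Lip_\B}$ and $\startrekset{\mathcal{C}}{\B,\Lip_\B}{\A,\Lip_\A}$, the infima defining $\propinquity_{\mathcal{C}}$ in Definition \ref{propinquity-def} agree, which is assertion (1).

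For the triangle inequality (2), the idea is simply that treks concatenate. Given any $\Gamma_1 \in \startrekset{\mathcal{C}}{\A,\Lip_\A}{\B,\Lip_\B}$ and $\Gamma_2 \in \startrekset{\mathcal{C}}{\B,\Lip_\B}{\D,\Lip_\D}$, write $\Gamma_1 = (\A_j,\Lip_j,\gamma_j,\A_{j+1},\Lip_{j+1} : j=1,\dots,n)$ with $(\A_1,\Lip_1) = (\A,\Lip_\A)$, $(\A_{n+1},\Lip_{n+1}) = (\B,\Lip_\B)$, and $\Gamma_2 = (\B_k,\Lip'_k,\gamma'_k,\B_{k+1},\Lip'_{k+1} : k=1,\dots,m)$ with $(\B_1,\Lip'_1) = (\B,\Lip_\B)$, $(\B_{m+1},\Lip'_{m+1}) = (\D,\Lip_\D)$. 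Concatenating at the common endpoint $(\B,\Lip_\B)$ produces an $(n+m)$-tuple which is again a $\mathcal{C}$-trek from $(\A,\Lip_\A)$ to $(\D,\Lip_\D)$ — here it is essential that $(\B,\Lip_\B) \in \mathcal{C}$, which is part of the hypothesis — and whose length, by Definition \ref{treklength-def}, is $\treklength{\Gamma_1} + \treklength{\Gamma_2}$. Taking the infimum over $\Gamma_1$ and over $\Gamma_2$ and using Definition \ref{propinquity-def} gives assertion (2).

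\textbf{Main obstacle.} None of the steps is genuinely hard; the only point requiring care is verifying that the inverse bridge $\gamma^{-1}$ is legitimate, namely that its pivot $\omega^\ast$ has non-empty $1$-level. This is exactly where the symmetric pair of conditions in Definition \ref{one-level-def} (rather than a single condition) pays off: it guarantees $\mathscr{S}_1(\omega^\ast) = \mathscr{S}_1(\omega)$ and hence that reversing a bridge stays within the class of bridges, which is what makes symmetry a formal consequence of the construction. Everything else — the norm identity for bridge seminorms under the adjoint, invariance of the $\StateSpace(\cdot\,|\,\gamma)$ sets, and additivity of trek length under concatenation — is routine bookkeeping from the definitions.
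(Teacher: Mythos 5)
Your proposal is correct and follows essentially the same route as the paper: reverse each bridge via $(\D,\omega^\ast,\pi_\B,\pi_\A)$ to get symmetry, and concatenate treks at the common endpoint to get the triangle inequality, with trek length additive under concatenation. Your verification that $\mathscr{S}_1(\omega^\ast)=\mathscr{S}_1(\omega)$ and that reach and height are preserved under inversion is in fact spelled out more explicitly than in the paper, which simply asserts the length identity for the inverse bridge.
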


\begin{proof}
As a matter of notations, if $(\mathfrak{E},\omega,\pi,\rho)$ is a bridge from $(\A,\Lip_\A)$ to $(\B,\Lip_\B)$, then $\gamma^{-1} = (\mathfrak{E},\omega^\ast,\rho,\pi)$ is a bridge from $(\B,\Lip_\B)$ to $(\A,\Lip_\A)$. Of course, by Definition (\ref{bridgelength-def}):
\begin{equation*}
\bridgelength{\gamma^{-1}}{\Lip_\B,\Lip_\A} = \bridgelength{\gamma}{\Lip_\A,\Lip_\B}\text{.}
\end{equation*}

If $\Gamma = ( \A_j,\Lip_j,\gamma_j,\A_{j+1},\Lip_{j+1} : j=1,\ldots,n )$ is a $\mathcal{C}$-trek from $(\A,\Lip_\A)$ to $(\B,\Lip_\B)$, then $\Gamma^{-1} = ( \A_{n+2-j},\Lip_{n+2-j},\gamma^{-1}_{n+1-j},\A_{n+1-j},\Lip_{n+1-j} : j=1,\ldots,n )$ is a $\mathcal{C}$-trek from $(\B,\Lip_\B)$ to $(\A,\Lip_\A)$ by definition, and of course $\treklength{\Gamma} = \treklength{\Gamma^{-1}}$. This proves that our quantum propinquity is symmetric.

Let $\Gamma_1$ and $\Gamma_2$ be $\mathcal{C}$-treks, respectively, from $(\A,\Lip_\A)$ to $(\B,\Lip_\B)$ and from $(\B,\Lip_\B)$ to $(\D,\Lip_\D)$. We write:
\begin{align}
\Gamma_1 &= ( \A_j,\Lip_j, \gamma_j,\A_{j+1},\Lip_{j+1} : j = 1,\ldots,n_1 )\\
\intertext{and}
\Gamma_2 &= ( \B_j,\Lip'_j,\delta_j,\B_{j+1},\Lip'_{j+1} : j = 1,\ldots,n_2 )\text{.}
\end{align}
We have $(\A_{n_1+1},\Lip_{n_1+1}) = (\B,\Lip_\B) = (\B_1,\Lip'_1)$ by definition of treks. Thus, if we define for all $j \in \{1,\ldots,n_1+n_2\}$:
\begin{equation*}
(\mathfrak{E}_j,\Lip''_j) = \begin{cases}
(\A_j,\Lip_j)\text{ if $j = 1,\ldots,n_1$,}\\
(\B_{j-n_1},\Lip'_{j-n_1}) \text{ if $j=n_1+1,\ldots,n_2+n_1$,}
\end{cases}
\end{equation*}
and
\begin{equation*}
\kappa_j = \begin{cases}
\gamma_j \text{ if $j = 1,\ldots,n_1$,}\\
\delta_{j-n_1} \text{ if $j=n_1+1,\ldots,n_2+n_1$,}
\end{cases}
\end{equation*}
and if we write:
\begin{equation*}
\Gamma_1\star\Gamma_2 = ( \mathfrak{E}_j,\Lip''_j,\kappa_j, \mathfrak{E}_{j+1},\Lip_{j+1}'' : j = 1,\ldots,n_1+n_2 )
\end{equation*}
then $\Gamma_1\star\Gamma_2$ is a $\mathcal{C}$-trek from $(\A,\Lip_\A)$ to $(\D,\Lip_\B)$. Moreover, by construction, we have:
\begin{equation}\label{star-trek-product-eq}
\begin{split}
\treklength{\Gamma_1\star\Gamma_2} &= \sum_{j=1}^{n_1+n_2} \bridgelength{\kappa_j}{\Lip''_j,\Lip''_{j+1}}\\
&= \sum_{j=1}^{n_1}\bridgelength{\gamma_j}{\Lip_j,\Lip_{j+1}} + \sum_{j=1}^{n_2}\bridgelength{\delta_j}{\Lip'_j,\Lip'_{j+1}} \\
&= \treklength{\Gamma_1} + \treklength{\Gamma_2}\text{.}
\end{split}
\end{equation}

Now, let $\varepsilon > 0$. By definition of the quantum propinquity, there exists a $\mathcal{C}$-trek $\Gamma_1$ from $(\A,\Lip_\A)$ to $(\B,\Lip_\B)$ and a $\mathcal{C}$-trek from $\Gamma_2$ from $(\B,\Lip_\B)$ to $(\D,\Lip_\D)$ such that:
\begin{align*}
\treklength{\Gamma_1} &\leq \propinquity_{\mathcal{C}}((\A,\Lip_\A),(\B,\Lip_\B))+\frac{1}{2}\varepsilon\text{,}\\
\intertext{and}
\treklength{\Gamma_2} &\leq \propinquity_{\mathcal{C}}((\B,\Lip_\B),(\D,\Lip_\D)) + \frac{1}{2}\varepsilon\text{.}
\end{align*}

By Equation (\ref{star-trek-product-eq}), we have:
\begin{equation*}
\begin{split}
\propinquity_{\mathcal{C}}((\A,\Lip_\A),(\D,\Lip_\D)) &\leq \treklength{\Gamma_1\star\Gamma_2}\\
&= \treklength{\Gamma_1} + \treklength{\Gamma_2}\\
&\leq \propinquity_{\mathcal{C}}((\A,\Lip_\A),(\B,\Lip_\B)) + \propinquity_{\mathcal{C}}((\B,\Lip_\B),(\D,\Lip_\D)) + \varepsilon \text{.}
\end{split}
\end{equation*}
As $\varepsilon > 0$ is arbitrary, we conclude that the quantum propinquity $\propinquity$ satisfies the triangle inequality.
\end{proof}

\section{Distance Zero}
The idea to enforce the triangle inequality by taking our metric to be an infimum over all possible paths is a standard technique, but it has a price: we must put all our efforts in the proof that distance zero is equivalent to isometric isomorphism. To this end, we introduce some natural notions.


Let $(\A,\Lip_\A)$ and $(\B,\Lip_\B)$ be two {\Lqcms s} such that $\propinquity((\A,\Lip_\A),(\B,\Lip_\B)) = 0$. Our goal is to construct a quantum isometric isomorphism from $\A$ onto $\B$; in particular, we must associate to every $a\in\dom{\Lip_\A}$ an element $b\in\dom{\Lip_\B}$, in a manner consistent with the underlying algebraic structures. The following definition introduces, for any $a\in\dom{\Lip_\A}$, a set of natural candidates for the target of $a$ among all elements in $\dom{\Lip_\B}$, using first only bridges:

\begin{definition}\label{target-set-def}
Let $(\A,\Lip_\A)$ and $(\B,\Lip_\B)$ be two {\qcms s} and $\gamma \in \bridgeset{\A}{\B}$ be a bridge from $\A$ to $\B$. For any $r \geq \Lip_\A(a)$, we define the \emph{$r$-target set of $a$ for $\gamma$} by:
\begin{equation*}
\targetsetbridge{\gamma}{a}{r} = \left\{ b \in \sa{\B} : \Lip_\B(b)\leq r \text{ and }\bridgenorm{\gamma}{a,b} \leq r \bridgereach{\gamma}{\Lip_\A,\Lip_\B} \right\}\text{.}
\end{equation*}
\end{definition}

The first observation is that the target sets are never empty.

\begin{lemma}\label{nonempty-target-lem}
Let $(\A,\Lip_\A)$ and $(\B,\Lip_\B)$ be two {\qcms s} such that $\Lip_\B$ is lower semi-cont\-inuous, and let $\gamma \in \bridgeset{\A}{\B}$ be a bridge from $\A$ to $\B$. For any $r \geq \Lip_\A(a)$, the set $\targetsetbridge{\gamma}{a}{r}$ is not empty.
\end{lemma}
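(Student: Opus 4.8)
The plan is to show that the infimum defining the reach $\bridgereach{\gamma}{\Lip_\A,\Lip_\B}$ is actually attained along a Lip-ball, and then rescale. Concretely, writing $\gamma = (\D,\omega,\pi_\A,\pi_\B)$ and fixing $a \in \sa{\A}$ with $\Lip_\A(a) < \infty$, I would first dispose of the degenerate case: if $\Lip_\A(a) = 0$, then $a = t\unit_\A$ for some $t \in \R$, and $b = t\unit_\B$ lies in $\targetsetbridge{\gamma}{a}{r}$ for every $r \geq 0$ since $\bridgenorm{\gamma}{t\unit_\A, t\unit_\B} = |t|\,\|\omega - \omega\|_\D = 0$. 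So assume $\Lip_\A(a) > 0$.

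The key step is to apply Lemma \ref{bridgereach-reached-lem}: since $\Lip_\B$ is lower semi-continuous, there exists $b \in \dom{\Lip_\B}$ with $\Lip_\B(b) \leq \Lip_\A(a)$ achieving
\begin{equation*}
\bridgenorm{\gamma}{a,b} = \min\left\{ \bridgenorm{\gamma}{a,c} : c \in \dom{\Lip_\B} \text{ and } \Lip_\B(c) \leq \Lip_\A(a) \right\}\text{.}
\end{equation*}
It then remains to bound this minimum by $\Lip_\A(a)\,\bridgereach{\gamma}{\Lip_\A,\Lip_\B}$. Set $a' = \Lip_\A(a)^{-1} a$, so $\Lip_\A(a') \leq 1$, i.e. $\pi_\A(a')\omega \in \pi_\A(\Lipball{1}{\A,\Lip_\A})\omega$. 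By the definition of the reach as a Hausdorff distance (Definition \ref{bridgereach-def}), there exists $c \in \Lipball{1}{\B,\Lip_\B}$ with $\|\pi_\A(a')\omega - \omega\pi_\B(c)\|_\D \leq \bridgereach{\gamma}{\Lip_\A,\Lip_\B}$; here I use that $\pi_\B(\Lipball{1}{\B,\Lip_\B})$ is norm-compact by Lemma \ref{az-lemma} (after translating by a state to reduce to $\mulip$, or simply invoking that the Hausdorff infimum over a compact set is attained), so the distance from the point $\pi_\A(a')\omega$ to the set $\omega\pi_\B(\Lipball{1}{\B,\Lip_\B})$ is realized and is at most the reach. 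Multiplying through by $\Lip_\A(a)$, the element $\Lip_\A(a)\,c$ satisfies $\Lip_\B(\Lip_\A(a)c) \leq \Lip_\A(a)$ and $\bridgenorm{\gamma}{a, \Lip_\A(a)c} \leq \Lip_\A(a)\,\bridgereach{\gamma}{\Lip_\A,\Lip_\B}$, so it is a valid competitor in the minimization problem above; hence the minimizer $b$ also satisfies $\bridgenorm{\gamma}{a,b} \leq \Lip_\A(a)\,\bridgereach{\gamma}{\Lip_\A,\Lip_\B} \leq r\,\bridgereach{\gamma}{\Lip_\A,\Lip_\B}$ for any $r \geq \Lip_\A(a)$, and $\Lip_\B(b) \leq \Lip_\A(a) \leq r$. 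Thus $b \in \targetsetbridge{\gamma}{a}{r}$, and the target set is nonempty.

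I expect the only real subtlety to be the compactness/attainment bookkeeping: one must be careful that the Hausdorff distance $\Haus{\D}$ in Definition \ref{bridgereach-def} is genuinely between compact sets so that the one-sided distance from $\pi_\A(a')\omega$ to $\omega\pi_\B(\Lipball{1}{\B,\Lip_\B})$ is at most $\bridgereach{\gamma}{\Lip_\A,\Lip_\B}$ — this is exactly what the discussion preceding Definition \ref{bridgereach-def} establishes (the sets $\pi_\A(\mulip(\A,\Lip_\A,\varphi_\A))\omega$ and $\omega\pi_\B(\mulip(\B,\Lip_\B,\varphi_\B))\omega$ are norm-compact, and the full Lip-balls differ from these by scalar translates). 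Everything else is a straightforward rescaling argument, so this lemma is essentially a corollary of Lemma \ref{bridgereach-reached-lem} together with the definition of the reach.
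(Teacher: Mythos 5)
Your proof is correct and takes essentially the same route as the paper's: treat scalar multiples of the unit separately, invoke Lemma (\ref{bridgereach-reached-lem}) (using lower semi-continuity of $\Lip_\B$) to obtain a minimizer $b$ with $\Lip_\B(b)\leq\Lip_\A(a)$, and bound the minimum by $\Lip_\A(a)\bridgereach{\gamma}{\Lip_\A,\Lip_\B}\leq r\bridgereach{\gamma}{\Lip_\A,\Lip_\B}$ by rescaling and the definition of the reach. One small remark: your attainment digression is both imprecise and unnecessary --- $\pi_\B\left(\Lipball{1}{\B,\Lip_\B}\right)$ is not norm-compact (the Lip-ball contains $\R\unit_\B$, so only the sets $\mulip(\B,\Lip_\B,\varphi)$ are compact), but no attainment of the one-sided distance is needed, since for a point of $\pi_\A\left(\Lipball{1}{\A,\Lip_\A}\right)\omega$ that distance is at most the Haus\-dorff distance $\bridgereach{\gamma}{\Lip_\A,\Lip_\B}$ by definition, which already bounds the minimum furnished by Lemma (\ref{bridgereach-reached-lem}).
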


\begin{proof}
Let $a\in \dom{\Lip_\A}$. If $a = t\unit_\A$ for some $t\in\R$, then $t\unit_\B \in \targetsetbridge{\gamma}{a}{r}$. Assume $\Lip_\A(a)>0$. By Lemma (\ref{bridgereach-reached-lem}), there exists $b \in \sa{\B}$ such that $\Lip_\B(b) \leq \Lip_\A(a)$ and:
\begin{equation*}
\begin{split}
\bridgenorm{\gamma}{a,b} &= \min \{ \bridgenorm{\gamma}{a,c} : c\in\sa{\B}\text{ and }\Lip_\B(c) \leq \Lip_\A(a) \}\\
&=\Lip_\A(a) \min \{ \bridgenorm{\gamma}{\Lip_\A(a)^{-1}a,c} : c \in \sa{\B}\text{ and }\Lip_\B(c) \leq 1 \}\\
&\leq \Lip_\A(a) \bridgereach{\gamma}{\Lip_\A,\Lip_\B}\leq r\bridgereach{\gamma}{\Lip_\A,\Lip_\B} \text{.}
\end{split}
\end{equation*}
Thus $b \in \targetsetbridge{\gamma}{a}{r}$ as desired.
\end{proof}

We pause for a remark regarding the assumption of lower-semi-continuity in Lemma (\ref{nonempty-target-lem}). In general, let $(\A,\Lip_\A)$ and $(\B,\Lip_\B)$ be two {\qcms s} and define, for any $a\in\dom{\Lip_\A}$, $r \geq \Lip_\A(a)$ and $\varrho \geq \bridgereach{\gamma}{\Lip_\A,\Lip_\B}$, the generalized target set:
\begin{equation}\label{generalized-target-def}
\targetsetbridge{\gamma}{a}{r,\varrho} = \left\{ b \in \sa{\B} : \Lip_\B(b) \leq r\text{ and }\bridgenorm{\gamma}{a,b} \leq r\varrho \right\}\text{.}
\end{equation}

Then in general, $\targetsetbridge{\gamma}{a}{r,\varrho}$ is not empty if $\varrho > \bridgereach{\gamma}{\Lip_\A,\Lip_\B}$. Indeed, if $a = t\unit_\A$ for some $t\in \R$ then $t\unit_\B \in \targetsetbridge{\gamma}{a}{r,\varrho}$ for any $r \geq 0$. We now assume $\Lip_\A(a)>0$ and let $r \geq \Lip_\A(a)$. Let $a' = \Lip_\A(a)^{-1} a$. By Definition (\ref{bridgereach-def}) of the reach of a bridge, since $\Lip_\A(a')=1$ \emph{and $\varrho > \bridgereach{\gamma}{\Lip_\A,\Lip_\B}$}, there exists $b' \in \sa{\B}$ with $\Lip_\B(b') \leq 1$ such that:
\begin{equation*}
\|\pi_\A(a')\omega-\omega\pi_\B(b')\|_\D\leq \varrho \text{.}
\end{equation*}
Let $b = \Lip_\A(a)b'$. Then $\Lip_\B(b) \leq \Lip_\A(a)\Lip_\B(b') \leq \Lip_\A(a) \leq r$. Moreover, by construction:
\begin{equation*}
\bridgenorm{\gamma}{a,b} \leq \Lip_\A(a)\bridgenorm{\gamma}{a',b'}\leq\Lip_\A(a)\varrho \leq r\varrho \text{.}
\end{equation*}
Hence, $b \in \targetsetbridge{\gamma}{a}{r,\varrho}$, as desired. 

However, the set $\targetsetbridge{\gamma}{a}{r} = \targetsetbridge{\gamma}{a}{r,\bridgereach{\gamma}{\Lip_\A,\Lip_\B}}$ may be empty in general unless, as stated in Lemma (\ref{nonempty-target-lem}), we assume $\Lip_\B$ to be lower semi-cont\-inuous, so that we may apply Lemma (\ref{bridgereach-reached-lem}). Now, Theorem (\ref{main}) below will be established for the class of {\Lqcms s} and its sub-classes, and thus, to avoid making our exposition needlessly confusing by introducing two parameters in the definition of our target sets, when only one is needed for our main purpose, we shall work with lower semi-cont\-inuous seminorms henceforth. The curious reader may check that subsequent Propositions (\ref{fundamental-prop}),(\ref{diameter-prop}) remain true after dropping the requirement that $\Lip_\B$ be lower semi-cont\-inuous, and replacing the target sets $\targetsetbridge{\gamma}{a}{r}$ by the generalized target sets $\targetsetbridge{\gamma}{a}{r,\varrho}$ with $\varrho > \bridgelength{\gamma}{\Lip_\A,\Lip_\B}$, while substituting $\max\{\varrho,\bridgelength{\gamma}{\Lip_\A,\Lip_\B}\}$ for all occurrence of $\bridgelength{\gamma}{\Lip_\A,\Lip_\B}$. Simple similar adjustments could be made to Propositions (\ref{fundamental-trek-proposition}) and (\ref{product-prop}) to follow. 

\bigskip

The fundamental property of target sets, which is the subject of our next two results, is that their diameter admits an upper bound controlled by the bridge length from which they are built. The reason for the importance of this fact is that target sets, as defined, contain many possible candidates for the image of a single element; however if the quantum propinquity between two {\Lqcms s} $(\A,\Lip_\A)$ and $(\B,\Lip_\B)$ is small, then we can find relatively small target sets for each element $a\in\dom{\Lip_\A}$ thanks to the following fundamental lemma. This will help us focusing on a target singleton set for each element in $\dom{\Lip_\A}$ (and, by symmetry, for each element in $\dom{\Lip_\B}$). For this process to succeed, we however will need to use the completeness of the underlying C*-algebras of {\Lqcms s}.

The main observation of this section is thus given by the following Proposition, where the condition on the $1$-level set of pivot elements in bridges is used for the first time.

\begin{proposition}\label{fundamental-prop}
Let $(\A,\Lip_\A)$ and $(\B,\Lip_\B)$ be two {\qcms s} with $\Lip_\B$ lower semi-cont\-inuous, and $\gamma \in\bridgeset{\A}{\B}$ be a bridge from $\A$ to $\B$. For all $a \in \dom{\Lip_\A}$ and for all $r \geq \Lip_\A(a)$, we have:
\begin{equation*}
\sup \{ \|b\|_\B : b \in \targetsetbridge{\gamma}{a}{r} \} \leq 2r \bridgelength{\gamma}{\Lip_\A,\Lip_\B} + \|a\|_\A \text{.}
\end{equation*}
\end{proposition}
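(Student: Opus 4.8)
The plan is to bound $\|b\|_\B$ for an arbitrary $b\in\targetsetbridge{\gamma}{a}{r}$ by inserting the pivot element $\omega$ and exploiting a state $\varphi\in\mathscr{S}_1(\omega)$, which exists since bridges have non-empty $1$-level by Definition (\ref{bridge-def}). Write $\gamma=(\D,\omega,\pi_\A,\pi_\B)$. The key identity, coming from the Lemma characterizing $\mathscr{S}_1(\omega)$, is that for any $d\in\D$ and any $\varphi\in\mathscr{S}_1(\omega)$ we have $\varphi(d)=\varphi(\omega d)=\varphi(d\omega)$; in particular $\varphi\circ\pi_\A\in\StateSpace(\A|\gamma)$ and $\varphi\circ\pi_\B\in\StateSpace(\B|\gamma)$. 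The strategy is: estimate $\|b\|_\B=\|\pi_\B(b)\|_\D$, and since $\pi_\B(b)$ is self-adjoint, there is a state $\psi$ of $\D$ with $|\psi(\pi_\B(b))|=\|\pi_\B(b)\|_\D$; however it is cleaner to first replace $b$ by $b-\varphi_\B(b)\unit_\B$ for a suitable $\varphi_\B\in\StateSpace(\B|\gamma)$ and control the scalar part separately.

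Concretely, I would proceed as follows. First, fix $\varphi\in\mathscr{S}_1(\omega)$ and set $\varphi_\A=\varphi\circ\pi_\A\in\StateSpace(\A|\gamma)$, $\varphi_\B=\varphi\circ\pi_\B\in\StateSpace(\B|\gamma)$. For $b\in\targetsetbridge{\gamma}{a}{r}$, decompose $b=(b-\varphi_\B(b)\unit_\B)+\varphi_\B(b)\unit_\B$, so $\|b\|_\B\leq\|b-\varphi_\B(b)\unit_\B\|_\B+|\varphi_\B(b)|$. For the second term, note $\varphi_\B(b)=\varphi(\pi_\B(b))=\varphi(\omega\pi_\B(b))$, so using $\bridgenorm{\gamma}{a,b}=\|\pi_\A(a)\omega-\omega\pi_\B(b)\|_\D\leq r\bridgereach{\gamma}{\Lip_\A,\Lip_\B}$ and $\varphi(\pi_\A(a)\omega)=\varphi(\pi_\A(a))=\varphi_\A(a)$, we get $|\varphi_\B(b)-\varphi_\A(a)|=|\varphi(\omega\pi_\B(b))-\varphi(\pi_\A(a)\omega)|\leq\bridgenorm{\gamma}{a,b}\leq r\bridgereach{\gamma}{\Lip_\A,\Lip_\B}\leq r\bridgelength{\gamma}{\Lip_\A,\Lip_\B}$, while $|\varphi_\A(a)|\leq\|a\|_\A$. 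Hence $|\varphi_\B(b)|\leq r\bridgelength{\gamma}{\Lip_\A,\Lip_\B}+\|a\|_\A$.

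For the first term, I bound $\|b-\varphi_\B(b)\unit_\B\|_\B$ using the quantum metric structure. Since $\Lip_\B(b)\leq r$ and $b-\varphi_\B(b)\unit_\B$ vanishes at $\varphi_\B$, for any state $\psi\in\StateSpace(\B)$ we have $|\psi(b-\varphi_\B(b)\unit_\B)|=|\psi(b)-\varphi_\B(b)|\leq\Lip_\B(b)\Kantorovich{\Lip_\B}(\psi,\varphi_\B)$. Now $\varphi_\B\in\StateSpace(\B|\gamma)$, so by the definition of the height, $\Haus{\Kantorovich{\Lip_\B}}(\StateSpace(\B),\StateSpace(\B|\gamma))\leq\bridgeheight{\gamma}{\Lip_\A,\Lip_\B}\leq\bridgelength{\gamma}{\Lip_\A,\Lip_\B}$. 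The right way to use this is: we want to bound $\sup_\psi|\psi(b)-\varphi_\B(b)|$, but $\varphi_\B$ need not be an extreme choice; instead, fix any $\varphi'\in\StateSpace(\B)$ and write $|\psi(b)-\varphi'(b)|\leq\Lip_\B(b)\Kantorovich{\Lip_\B}(\psi,\varphi')\leq r\,\diam{\StateSpace(\B)}{\Kantorovich{\Lip_\B}}$. That only gives a diameter bound, which is not what the statement claims — so instead the correct approach is to go back to the pivot: $\|b-\varphi_\B(b)\unit_\B\|_\B=\|\pi_\B(b)-\varphi_\B(b)\unit_\D\|_\D$ and, writing $\pi_\B(b)-\varphi_\B(b)\unit_\D$, one inserts $\omega$. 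Actually the cleanest route is: $\|\omega\pi_\B(b)\|_\D\leq\|\pi_\A(a)\omega\|_\D+\bridgenorm{\gamma}{a,b}\leq\|a\|_\A\|\omega\|_\D+r\bridgereach{\gamma}{\Lip_\A,\Lip_\B}$, but this has an unwanted factor $\|\omega\|_\D$.

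The cleaner and correct approach, which I would actually carry out, is to bound $\|b\|_\B$ directly via a state of $\D$ definite on $\omega$: choose $\psi\in\StateSpace(\B)$ with $\psi(b)=\|b\|_\B$ or $=-\|b\|_\B$ (possible as $b$ is self-adjoint and $\StateSpace(\B)$ is weak* compact, achieving $\pm\|b\|_\B$ at extreme points). Using the height bound there is $\varphi_\B\in\StateSpace(\B|\gamma)$ with $\Kantorovich{\Lip_\B}(\psi,\varphi_\B)\leq\bridgeheight{\gamma}{\Lip_\A,\Lip_\B}\leq\bridgelength{\gamma}{\Lip_\A,\Lip_\B}$; lift $\varphi_\B=\varphi\circ\pi_\B$ with $\varphi\in\mathscr{S}_1(\omega)$, set $\varphi_\A=\varphi\circ\pi_\A$. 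Then
\begin{equation*}
\|b\|_\B=|\psi(b)|\leq|\psi(b)-\varphi_\B(b)|+|\varphi_\B(b)|\leq r\bridgelength{\gamma}{\Lip_\A,\Lip_\B}+|\varphi_\B(b)|\text{,}
\end{equation*}
where $|\psi(b)-\varphi_\B(b)|\leq\Lip_\B(b)\Kantorovich{\Lip_\B}(\psi,\varphi_\B)\leq r\bridgelength{\gamma}{\Lip_\A,\Lip_\B}$ using $\Lip_\B(b)\leq r$. For $|\varphi_\B(b)|$, since $\varphi\in\mathscr{S}_1(\omega)$ gives $\varphi_\B(b)=\varphi(\omega\pi_\B(b))$ and $\varphi_\A(a)=\varphi(\pi_\A(a)\omega)$:
\begin{equation*}
|\varphi_\B(b)|\leq|\varphi(\omega\pi_\B(b))-\varphi(\pi_\A(a)\omega)|+|\varphi_\A(a)|\leq\bridgenorm{\gamma}{a,b}+\|a\|_\A\leq r\bridgereach{\gamma}{\Lip_\A,\Lip_\B}+\|a\|_\A\leq r\bridgelength{\gamma}{\Lip_\A,\Lip_\B}+\|a\|_\A\text{.}
\end{equation*}
Adding, $\|b\|_\B\leq 2r\bridgelength{\gamma}{\Lip_\A,\Lip_\B}+\|a\|_\A$, and taking the supremum over $b\in\targetsetbridge{\gamma}{a}{r}$ finishes the argument. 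The main obstacle is organizational: making sure that the state $\psi$ attaining $\|b\|_\B$ lies in $\StateSpace(\B)$ (not $\StateSpace(\D)$) so that the height estimate, which compares $\StateSpace(\B)$ with $\StateSpace(\B|\gamma)$, applies, and keeping the two uses of $\bridgelength{\gamma}{\Lip_\A,\Lip_\B}$ (once as an upper bound for the height, once for the reach) straight so the constants come out to exactly $2r$.
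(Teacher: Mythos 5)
Your final argument is correct and is essentially the paper's own proof: the paper likewise takes a state of $\B$ (an arbitrary one, then takes the supremum, rather than a norm-attaining one), uses the height to find $\psi\in\mathscr{S}_1(\omega)$ with $\Kantorovich{\Lip_\B}(\varphi,\psi\circ\pi_\B)\leq\bridgeheight{\gamma}{\Lip_\A,\Lip_\B}$, inserts $\omega$ via the definiteness of $\psi$, and then applies the reach bound together with $|\psi(\pi_\A(a))|\leq\|a\|_\A$ to obtain the constant $2r\bridgelength{\gamma}{\Lip_\A,\Lip_\B}+\|a\|_\A$. The exploratory detours in your first two paragraphs are harmless since you correctly discard them.
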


\begin{proof}
Let $a\in\dom{\Lip_\A}$ and $r\geq \Lip_\A(a)$. By Lemma (\ref{nonempty-target-lem}), we know that $\targetsetbridge{\gamma}{a}{r}$ is not empty. Let $b\in\targetsetbridge{\gamma}{a}{r,\lambda}$. By Definition (\ref{target-set-def}), we have:
\begin{equation*}
\left\{\begin{array}{l}
\Lip_\B(b)\leq r\\
\bridgenorm{\gamma}{a,b}\leq r\bridgereach{\gamma}{\Lip_\A,\Lip_\B}\leq r\bridgelength{\gamma}{\Lip_\A,\Lip_\B}\text{.}
\end{array}\right.
\end{equation*}
Let $\varphi \in \StateSpace(\B)$. By Definition (\ref{bridgeheight-def}) of the bridge height $\bridgeheight{\gamma}{\Lip_\A,\Lip_\B}$, there exists $\psi \in \StateSpace(\D)$ such that $\psi(d)=\psi(\omega d)=\psi(d\omega)$ for all $d\in \D$, and:
\begin{equation*}
\Kantorovich{\Lip_\B}(\varphi,\psi\circ\pi_\B) \leq \bridgeheight{\gamma}{\Lip_\A,\Lip_\B}\text{.}
\end{equation*}
Thus:
\begin{equation*}
\begin{split}
|\varphi(b)| &\leq |\varphi(b) - \psi(\pi_\B(b))| + |\psi(\omega\pi_\B(b))|\\
&\leq \Kantorovich{\Lip_\B}(\varphi,\psi\circ\pi_\B)\Lip_\B(b) + |\psi(\omega\pi_\B(b)) - \psi(\pi_\A(a)\omega)| + |\psi(\pi_\A(a)\omega)|\\
&\leq r\bridgeheight{\gamma}{\Lip_\A,\Lip_\B} + \|\pi_\A(a)\omega-\omega\pi_\B(b)\|_\D + |\psi(\pi_\A(a))|\\
&= r\bridgeheight{\gamma}{\Lip_\A,\Lip_\B} + \bridgenorm{\gamma}{a,b} + |\psi(\pi_\A(a))|\\
&\leq r\bridgeheight{\gamma}{\Lip_\A,\Lip_\B} + r\bridgereach{\gamma}{\Lip_\A,\Lip_\B} + \|a\|_\A\\
&\leq 2r\bridgelength{\gamma}{\Lip_\A,\Lip_\B} + \|a\|_\A\text{.}
\end{split}
\end{equation*}
Thus, since $b\in\sa{\B}$, 
 we have:
\begin{equation*}
\|b\|_\B = \sup \{ |\varphi(b)| : \varphi\in\StateSpace(\B) \} \leq 2r\bridgelength{\gamma}{\Lip_\A,\Lip_\B} + \|a\|_\A\text{,}
\end{equation*}
as desired. 
\end{proof}

We may now use our fundamental Proposition (\ref{fundamental-prop}) to prove the key property of target sets:

\begin{proposition}\label{diameter-prop}
Let $(\A,\Lip_\A)$ and $(\B,\Lip_\B)$ be two {\qcms s} with $\Lip_\B$ lower semi-cont\-inuous, and $\gamma \in\bridgeset{\A}{\B}$ be a bridge from $\A$ to $\B$. 
\begin{enumerate}
\item For all $a,a' \in \dom{\Lip_\A}$ and for $r \geq \Lip_\A(a)$ and $r' \geq \Lip_\A(a')$, if $b\in\targetsetbridge{\gamma}{a}{r}$ and $b'\in\targetsetbridge{\gamma}{a'}{r'}$ and $t\in \R$, then:
\begin{equation*}
b+tb' \in \targetsetbridge{\gamma}{a+ta'}{r+|t|r'}\text{.}
\end{equation*}
\item For all $a,a' \in \dom{\Lip_\A}$ and for $r \geq \max\{\Lip_\A(a),\Lip_\A(a')\}$, we have:
\begin{equation}\label{base-ineq}
\sup \{ \|b-b'\|_\B : b\in \targetsetbridge{\gamma}{a}{r}, b'\in\targetsetbridge{\gamma}{a}{r} \} \leq 4r \bridgelength{\gamma}{\Lip_\A,\Lip_\B} + \|a-a'\|_\A\text{,}
\end{equation}
and thus in particular, for all $r \geq \Lip_\A(a)$:
\begin{equation}\label{diameter-ineq}
\diam{\targetsetbridge{\gamma}{a}{r}}{\|\cdot\|_\B} \leq 4r \bridgelength{\gamma}{\Lip_\A,\Lip_\B} \text{.}
\end{equation}
\end{enumerate}
\end{proposition}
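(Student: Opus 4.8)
The plan is to derive both parts directly from Definition \ref{target-set-def} and Proposition \ref{fundamental-prop}, with part (1) doing the bookkeeping and part (2) following by a difference trick.

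For part (1), I would simply verify the two defining conditions of the target set. First, since $\Lip_\B$ is a seminorm, $\Lip_\B(b+tb') \leq \Lip_\B(b) + |t|\Lip_\B(b') \leq r + |t|r'$; and similarly $\Lip_\A(a+ta') \leq \Lip_\A(a) + |t|\Lip_\A(a') \leq r+|t|r'$, so the parameter $r+|t|r'$ is admissible. Second, using that $\pi_\A$ and $\pi_\B$ are linear together with the triangle inequality in $\D$,
\begin{equation*}
\bridgenorm{\gamma}{a+ta',b+tb'} \leq \bridgenorm{\gamma}{a,b} + |t|\,\bridgenorm{\gamma}{a',b'} \leq \bigl(r+|t|r'\bigr)\bridgereach{\gamma}{\Lip_\A,\Lip_\B}\text{,}
\end{equation*}
by the hypotheses $b\in\targetsetbridge{\gamma}{a}{r}$, $b'\in\targetsetbridge{\gamma}{a'}{r'}$. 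Hence $b+tb'\in\targetsetbridge{\gamma}{a+ta'}{r+|t|r'}$.

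For part (2), fix $a,a'\in\dom{\Lip_\A}$ and $r\geq\max\{\Lip_\A(a),\Lip_\A(a')\}$, and pick $b\in\targetsetbridge{\gamma}{a}{r}$, $b'\in\targetsetbridge{\gamma}{a'}{r}$. Applying part (1) with $t=-1$ (and noting $\Lip_\A(a-a')\leq\Lip_\A(a)+\Lip_\A(a')\leq 2r$), we get $b-b'\in\targetsetbridge{\gamma}{a-a'}{2r}$. Now I would invoke Proposition \ref{fundamental-prop} applied to the element $a-a'$ with parameter $2r$, which gives
\begin{equation*}
\|b-b'\|_\B \leq \sup\{\|c\|_\B : c\in\targetsetbridge{\gamma}{a-a'}{2r}\} \leq 2(2r)\bridgelength{\gamma}{\Lip_\A,\Lip_\B} + \|a-a'\|_\A\text{,}
\end{equation*}
which is exactly Inequality (\ref{base-ineq}). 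Taking the supremum over $b,b'$ yields the stated bound, and specializing to $a'=a$ (so that $\|a-a'\|_\A=0$) gives Inequality (\ref{diameter-ineq}).

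There is essentially no obstacle here: the analytic content — the compactness arguments and, crucially, the use of the $1$-level set of the pivot via the bridge height — is already packaged inside Proposition \ref{fundamental-prop} and Lemma \ref{nonempty-target-lem}. The only thing to be careful about is the bookkeeping of parameters, namely that forming differences doubles the radius (from $r$ to $2r$) and hence produces the factor $4r$ rather than $2r$ in the final estimate; one must check each invoked parameter is admissible (i.e. dominates the relevant $\Lip_\A$-value), which is immediate from subadditivity of $\Lip_\A$.
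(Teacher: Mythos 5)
Your proposal is correct and follows essentially the same route as the paper: part (1) by direct verification of the two conditions in Definition (\ref{target-set-def}) using subadditivity of the seminorms and of the bridge seminorm, and part (2) by applying (1) with $t=-1$ and then Proposition (\ref{fundamental-prop}) to $a-a'$ with parameter $2r$, which yields the factor $4r$. The parameter bookkeeping you flag (checking $r+|t|r'$ dominates $\Lip_\A(a+ta')$ and $2r \geq \Lip_\A(a-a')$) is exactly the only point the paper also attends to.
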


\begin{proof}
Let $a,a'\in\dom{\Lip_\A}$ and $t\in\R$. Then:
\begin{equation*}
\Lip_\A(a+ta') \leq \Lip_\A(a)+|t|\Lip_\A(a')< \infty\text{.}
\end{equation*}

Let $r \geq \Lip_\A(a)$ and $r' \geq \Lip_\A(a')$. 

Let $b\in\targetsetbridge{\gamma}{a}{r}$ and $b'\in\targetsetbridge{\gamma}{a'}{r'}$. We thus have:
\begin{equation*}
\Lip_\B(b+t b')\leq \Lip_\B(b)+|t|\Lip_\B(b') \leq r+|t|r'
\end{equation*}
and
\begin{multline}
\|\pi_\A(a+t a')\omega-\omega\pi_\B(b + tb')\|_\D \leq \bridgenorm{\gamma}{a,b} + |t|\bridgenorm{\gamma}{a',b'} \\ \leq \left(r+|t|r'\right) \bridgereach{\gamma}{\Lip_\A,\Lip_\B}\text{.}
\end{multline}
Hence by Definition (\ref{target-set-def}), we have $b+tb' \in \targetsetbridge{\gamma}{a+ta'}{r+|t|r}$. This proves (1).

In particular, let $r \geq \max\{\Lip_\A(a),\Lip_\A(a')\}$. Then $b-b' \in \targetsetbridge{\gamma}{a-a'}{2r}$, and thus by Proposition (\ref{fundamental-prop}), we have:
\begin{equation*}
\|b-b'\|_\B \leq 4r\bridgelength{\gamma}{\Lip_\A,\Lip_\B} + \|a-a'\|_\A\text{,}
\end{equation*}
as desired.

Inequality (\ref{diameter-ineq}) is proven by taking $a=a'$ in Inequality (\ref{base-ineq}).
\end{proof}


The quantum propinquity is constructed using treks rather than bridges, so we will extend Propositions (\ref{fundamental-prop}) and (\ref{diameter-prop}) to treks, using the following natural notion.

\begin{definition}\label{itinerary-def}
An \emph{itinerary} for a trek $\Gamma = (\A_j,\Lip_j,\gamma_j,\A_{j+1},\Lip_{j+1} : j=1,\ldots,n)$ from a {\qcms} $(\A,\Lip_\A)$ to a {\qcms} $(\B,\Lip_\B)$ is a $(n+1)$-tuple $\eta = (\eta_1,\ldots,\eta_{n+1})$ with $\eta_j \in \sa{\A_j}$ for all $j\in\{1,\ldots,n+1\}$. The itinerary $\eta$ \emph{starts} at $\eta_1$ and \emph{ends} at $\eta_{n+1}$, or equivalent is an itinerary \emph{from $\eta_1$ to $\eta_{n+1}$}.
\end{definition}

\begin{notation}\label{itineraries-not}
Let $\Gamma$ be a trek from a {\qcms} $(\A,\Lip_\A)$ to a {\qcms} $(\B,\Lip_\B)$ and let $a\in\sa{\A}$ and $b\in\sa{\B}$. The set of all itineraries from $a$ to $b$ is denoted by $\itineraries{\Gamma}{a}{b}$. The set of all itineraries starting at $a$ is denoted by $\itineraries{\Gamma}{a}{\cdot}$ while the set of all itineraries ending at $b$ is denoted by $\itineraries{\Gamma}{\cdot}{b}$.
\end{notation}

\begin{definition}\label{lambda-itinerary-def}
Let $(\A,\Lip_\A)$ and $(\B,\Lip_\B)$ be two {\qcms s} in some nonempty class $\mathcal{C}$ of {\qcms s}. Let:
\begin{equation*}
\Gamma \in\startrekset{\mathcal{C}}{\A,\Lip_\A}{\B,\Lip_\B}
\end{equation*}
be a trek from $(\A,\Lip_\A)$ to $(\B,\Lip_\B)$. For any $a\in \dom{\Lip_\A}$, $b\in\sa{\B}$ and any $r \geq \Lip_\A(a)$, the \emph{set $\lambdaitineraries{\Gamma}{a}{b}{r}$ of $r$-itineraries for $\Gamma$ starting at $a$ and ending at $b$} is defined as:
\begin{equation*}
 \left\{ \eta\in\itineraries{\Gamma}{a}{b} : \forall j\in\{1,\ldots,n\} \quad \eta_{j+1} \in \targetsetbridge{\gamma_j}{\eta_j}{r} \right\}\text{.}
\end{equation*}
\end{definition}

\begin{remark}
In general, the set of $r$-itineraries for some trek, from a point to another point, may be empty. For instance, in a {\Lqcms} whose underlying space is at least of dimension 2, there is no $0$-itinerary from $0$ to an element $a$ which is not a scalar multiple of the identity; more generally, a necessary condition for a set of $r$-itineraries from some $a$ to some $b$ to be nonempty is that $\Lip_\B(b)\leq r$.
\end{remark}

\begin{remark}
By Definition (\ref{target-set-def}), the set of $r$-itineraries for $r > \Lip_\A(a)$ is given by:
\begin{equation*}
\left\{ \eta \in \itineraries{\Gamma}{a}{b} \middle| \forall j \in \{1,\ldots,n\}\quad
\begin{aligned}
&\,\bridgenorm{\gamma_j}{\eta_j,\eta_{j+1}} \leq r\bridgereach{\gamma_j}{\Lip_j,\Lip_{j+1}}\text{,}\\
&\,\Lip_{j+1}(\eta_{j+1})\leq r \text{.}
\end{aligned}
 \right\} 
\end{equation*}
\end{remark}

We can now generalize the notion of a target set from bridges to treks, and deduce their fundamental properties by extending by induction the properties of target sets for bridges.

\begin{definition}\label{target-trek-def}
Let $(\A,\Lip_\A)$ and $(\B,\Lip_\B)$ be two {\qcms s} in some nonempty class $\mathcal{C}$ of {\qcms s}. Let:
\begin{equation*}
\Gamma \in\startrekset{\mathcal{C}}{\A,\Lip_\A}{\B,\Lip_\B}
\end{equation*}
be a $\mathcal{C}$-trek from $(\A,\Lip_\A)$ to $(\B,\Lip_\B)$. For any $a\in\dom{\Lip_\A}$ and $r \geq \Lip_\A(a)$, the \emph{target set for $a$ of spread $r$} is given as:
\begin{equation*}
\targetsettrek{\Gamma}{a}{r} = \left\{ b \in \B : \lambdaitineraries{\Gamma}{a}{b}{r} \not=\emptyset \right\}\text{.}
\end{equation*}
\end{definition}

\begin{proposition}\label{fundamental-trek-proposition}
Let $(\A,\Lip_\A)$ and $(\B,\Lip_\B)$ be two {\qcms s} in a nonempty class $\mathcal{C}$ of {\qcms s} whose Lip-norms are all lower semi-cont\-inuous. Let:
\begin{equation*}
\Gamma \in\startrekset{\mathcal{C}}{\A,\Lip_\A}{\B,\Lip_\B}
\end{equation*}
be a $\mathcal{C}$-trek from $(\A,\Lip_\A)$ to $(\B,\Lip_\B)$. Then, the following statements hold true.
\begin{enumerate}
\item For all $a\in \dom{\Lip_\A}$ and $r \geq \Lip_\A(a)$, we have $\targetsettrek{\Gamma}{a}{r} \not=\emptyset$.
\item For all $a\in \dom{\Lip_\A}$ and $r \geq \Lip_\A(a)$, and for all $b \in \targetsettrek{\Gamma}{a}{r}$, we have:
\begin{equation}\label{fundamental-trek-prop-basic-eq}
\|b\|_\B \leq 2r \treklength{\Gamma} + \|a\|_\A \text{.}
\end{equation}
\item For all $a,a' \in\dom{\Lip_\A}$ and for all $r \geq \Lip_\A(a)$ and $r' \geq \Lip_\A(a')$, if $b\in\targetsettrek{\Gamma}{a}{r}$ and $b'\in\targetsettrek{\Gamma}{a'}{r'}$, then for all $t\in\R$ we have:
\begin{equation}\label{fundamental-trek-prop-linear-eq}
b+tb' \in \targetsettrek{\Gamma}{a+ta'}{r+|t|r'} \text{.}
\end{equation}
\item For all $a,a'\in \dom{\Lip_\A}$ and $r \geq \max\{\Lip_\A(a),\Lip_\A(a')\}$ we have:
\begin{equation}\label{fundamental-trek-prop-distance-eq}
\sup \left\{ \|b-b'\|_\B : b\in\targetsettrek{\Gamma}{a}{r}, b'\in\targetsettrek{\Gamma}{a'}{r} \right\} \leq 4r\treklength{\Gamma} + \|a-a'\|_\A\text{,}
\end{equation}
and thus in particular:
\begin{equation}\label{fundamental-trek-prop-diameter-eq}
\diam{\targetsettrek{\Gamma}{a}{r}}{\|\cdot\|_\B} \leq 4r\treklength{\Gamma} \text{.}
\end{equation}
\end{enumerate}
\end{proposition}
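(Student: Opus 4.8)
The plan is to induct on the number $n$ of bridges in $\Gamma = (\A_j,\Lip_j,\gamma_j,\A_{j+1},\Lip_{j+1} : j=1,\ldots,n)$. For $n=1$ the trek is a single bridge $\gamma_1 \in \bridgeset{\A}{\B}$, and an itinerary from $a$ to $b$ in the sense of Definition (\ref{lambda-itinerary-def}) is simply the pair $(a,b)$; hence $\targetsettrek{\Gamma}{a}{r} = \targetsetbridge{\gamma_1}{a}{r}$ and $\treklength{\Gamma} = \bridgelength{\gamma_1}{\Lip_\A,\Lip_\B}$, so that (1) is Lemma (\ref{nonempty-target-lem}), (2) is Proposition (\ref{fundamental-prop}), and (3) and (4) are parts (1) and (2) of Proposition (\ref{diameter-prop}).

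For the inductive step, decompose $\Gamma = \Gamma' \star \gamma_n$, where $\Gamma'$ is the $\mathcal{C}$-trek $(\A_j,\Lip_j,\gamma_j,\A_{j+1},\Lip_{j+1} : j=1,\ldots,n-1)$ from $(\A,\Lip_\A)$ to $(\A_n,\Lip_n)$ and $\gamma_n \in \bridgeset{\A_n}{\B}$ is the final bridge, so that $\treklength{\Gamma} = \treklength{\Gamma'} + \bridgelength{\gamma_n}{\Lip_n,\Lip_\B}$. By truncating an $r$-itinerary for $\Gamma$ at its $n$-th coordinate and, conversely, by splicing an $r$-itinerary for $\Gamma'$ with a single admissible final bridge step, one obtains for every $r \geq \Lip_\A(a)$ the identity
\[
\targetsettrek{\Gamma}{a}{r} = \bigcup\left\{\targetsetbridge{\gamma_n}{c}{r} : c\in\targetsettrek{\Gamma'}{a}{r}\right\}\text{.}
\]
A small point to check here is that any $c\in\targetsettrek{\Gamma'}{a}{r}$ satisfies $\Lip_n(c)\leq r$ — so that $\targetsetbridge{\gamma_n}{c}{r}$ is defined — which holds because such a $c$ is the terminal coordinate of an $r$-itinerary for $\Gamma'$ and hence lies in a bridge target set of spread $r$, forcing $\Lip_n(c)\leq r$ by Definition (\ref{target-set-def}).

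Granting this decomposition, each assertion for $\Gamma$ is obtained by composing the inductive hypothesis for $\Gamma'$ with the corresponding bridge statement for $\gamma_n$ (all Lip-norms of $\mathcal{C}$ being lower semi-continuous, so that Lemma (\ref{nonempty-target-lem}) and Propositions (\ref{fundamental-prop}) and (\ref{diameter-prop}) apply to $\gamma_n$). Statement (1) is immediate since $\targetsettrek{\Gamma'}{a}{r}\neq\emptyset$ by hypothesis and each $\targetsetbridge{\gamma_n}{c}{r}$ is nonempty by Lemma (\ref{nonempty-target-lem}). For (2), if $b\in\targetsetbridge{\gamma_n}{c}{r}$ with $c\in\targetsettrek{\Gamma'}{a}{r}$, then Proposition (\ref{fundamental-prop}) gives $\|b\|_\B\leq 2r\bridgelength{\gamma_n}{\Lip_n,\Lip_\B}+\|c\|_{\A_n}$ and the inductive hypothesis gives $\|c\|_{\A_n}\leq 2r\treklength{\Gamma'}+\|a\|_\A$; adding these and using $\treklength{\Gamma} = \treklength{\Gamma'}+\bridgelength{\gamma_n}{\Lip_n,\Lip_\B}$ yields (\ref{fundamental-trek-prop-basic-eq}). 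For (3), choose $c\in\targetsettrek{\Gamma'}{a}{r}$ and $c'\in\targetsettrek{\Gamma'}{a'}{r'}$ with $b\in\targetsetbridge{\gamma_n}{c}{r}$ and $b'\in\targetsetbridge{\gamma_n}{c'}{r'}$; the inductive hypothesis places $c+tc'$ in $\targetsettrek{\Gamma'}{a+ta'}{r+|t|r'}$, part (1) of Proposition (\ref{diameter-prop}) places $b+tb'$ in $\targetsetbridge{\gamma_n}{c+tc'}{r+|t|r'}$, and splicing the corresponding itineraries gives (\ref{fundamental-trek-prop-linear-eq}). Finally (4) follows from (2) and (3) exactly as part (2) of Proposition (\ref{diameter-prop}) follows from part (1) and Proposition (\ref{fundamental-prop}): taking $t=-1$ and $r'=r$ in (3) puts $b-b'$ in $\targetsettrek{\Gamma}{a-a'}{2r}$, and then (2) applied to $a-a'$ with spread $2r$ gives (\ref{fundamental-trek-prop-distance-eq}), while setting $a=a'$ gives the diameter bound (\ref{fundamental-trek-prop-diameter-eq}).

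The main obstacle is not any individual estimate — all of them follow mechanically from the bridge-level results — but the bookkeeping that makes the induction legitimate: one must verify throughout that the spread parameter (which differs between $a$ and $a'$ in (3) and doubles to $2r$ in (4)) dominates the relevant Lip-norm value at each intermediate node, so that every bridge target set invoked is both well-defined and nonempty, and that the truncation–splicing correspondence between itineraries for $\Gamma$ and for $\Gamma'$ is stated precisely enough to transfer all four properties at once.
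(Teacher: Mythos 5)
Your proposal is correct and takes essentially the same route as the paper: both arguments propagate the bridge-level results (Lemma (\ref{nonempty-target-lem}), Propositions (\ref{fundamental-prop}) and (\ref{diameter-prop})) along the trek by induction, with the spread bookkeeping you describe (in particular $\Lip_j(\eta_j)\leq r$ at every intermediate node, guaranteed by Definition (\ref{target-set-def})) being exactly the point the paper also relies on. The only differences are organizational — the paper inducts along the coordinates of a fixed $r$-itinerary rather than on the number of bridges via your truncation/splicing identity, and it proves (4) by a direct telescoping estimate with Proposition (\ref{diameter-prop}) instead of deducing it from (2) and (3), which is equally valid since $2r\geq\Lip_\A(a-a')$.
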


\begin{proof}
We write $\Gamma = \left(\A_j,\Lip_j,\gamma_j,\A_{j+1},\Lip_{j+1} : j=1,\ldots,n\right)$, where, by Definition (\ref{trek-def}), we have:
\begin{itemize}
\item $(\A,\Lip_\A) = (\A_1,\Lip_1)$ and $(\B,\Lip_\B) = (\A_{n+1},\Lip_{n+1}) $,
\item for all $j\in\{1,\ldots,n+1\}$, the pair $(\A_j,\Lip_j)$ is a {\qcms} in $\mathcal{C}$, and in particular $\Lip_j$ is lower semi-con\-tinuous,
\item for all $j\in\{1,\ldots,n+1\}$, we have $\gamma_j\in\bridgeset{\A_j}{\A_{j+1}}$.
\end{itemize}

Moreover, for all $j \in \{1,\ldots,n\}$ we denote $\bridgelength{\gamma_j}{\Lip_j,\Lip_{j+1}}$ by $\lambda_j$. 

\bigskip

By Lemma (\ref{nonempty-target-lem}), the set $\targetsetbridge{\gamma_1}{a}{r}$ is not empty, and if $a_{j+1} \in \targetsetbridge{\gamma_j}{a_j}{r}$ for some $j\in \{1,\ldots,n-1\}$, then, since $\Lip_{j+1}(a_{j+1})\leq r$, we conclude again by Lemma (\ref{nonempty-target-lem}) that $\targetsetbridge{\gamma_{j+1}}{a_{j+1}}{r}$ is not empty. 
Therefore, by induction and by Definition (\ref{lambda-itinerary-def}), the set $\lambdaitineraries{\Gamma}{a}{\cdot}{r}$ is not empty, and thus by Definition (\ref{target-trek-def}), the set $\targetsettrek{\Gamma}{a}{r}$ is not empty. This proves (1).

Fix now $a \in \dom{\Lip_\A}$ and let $r \geq \Lip_\A(a)$. Let $b \in \targetsettrek{\Gamma}{a}{r}$. By Definition (\ref{target-trek-def}), there exists $\eta\in\lambdaitineraries{\Gamma}{a}{b}{r}$. By Definition (\ref{lambda-itinerary-def}), we have $\eta_{j+1}\in\targetsetbridge{\gamma_j}{\eta_j}{r}$ for all $j\in\{1,\ldots,n\}$. By Proposition (\ref{fundamental-prop}), we thus have:
\begin{equation*}
\forall j\in\{1,\ldots,n\}\quad \|\eta_{j+1}\|_{\A_{j+1}} \leq \|\eta_j\|_{\A_j} + 2r\lambda_j \text{.}
\end{equation*}
Hence, by induction, we conclude that for all $j\in\{2,\ldots,n+1\}$:
\begin{equation*}
\|\eta_j\|_{\A_{j}} \leq \|a\|_\A + 2r\sum_{k=1}^{j-1}\lambda_k \text{,} 
\end{equation*}
and thus:
\begin{equation*}
\|b\|_\B = \|\eta_{n+1}\|_{\A_{n+1}} \leq \|a\|_\A + 2r \treklength{\Gamma}\text{.}
\end{equation*}
This proves (2).

Let now $a,a' \in \dom{\Lip_\A}$ and choose $r \geq\Lip_\A(a)$, $r'\geq \Lip_\A(a')$ and $t\in\R$. Let $b \in \targetsettrek{\Gamma}{a}{r}$ and $b' \in \targetsettrek{\Gamma}{a'}{r'}$.

Let $(\eta_j)_{j=1}^{n+1}\in\lambdaitineraries{\Gamma}{a}{b}{r}$ and $(\zeta_j)_{j=1}^{n+1}\in\lambdaitineraries{\Gamma}{a'}{b'}{r'}$. For each $j \in \{1,\ldots,n\}$, we have $\eta_{j+1} \in \targetsetbridge{\gamma_j}{\eta_j}{r}$ and $\zeta_{j+1}\in\targetsetbridge{\gamma_j}{\zeta_j}{r'}$, and thus by Proposition (\ref{diameter-prop}), we have:
\begin{equation*}
\forall j \in \{1,\ldots,n\} \quad \eta_{j+1} + t\zeta_{j+1} \in \targetsetbridge{\gamma_j}{\eta_j+t\zeta_j}{r+|t|r'}
\end{equation*}
which implies, in turn, that $(\eta_j+t \zeta_j)_{j=1}^{n+1} \in \lambdaitineraries{\Gamma}{a+ta'}{b+tb'}{r+|t|r'}$. Thus by Definition (\ref{target-trek-def}), we have $b + tb' \in \targetsettrek{\Gamma}{a+ta'}{r+|t|r'}$. This proves (3).

Similarly, let $a,a' \in \dom{\Lip_\A}$ and $r \geq \max\{\Lip_\A(a),\Lip_\A(a')\}$. Let $b\in \targetsettrek{\Gamma}{a}{r}$ and $b'\in\targetsettrek{\Gamma}{a'}{r}$. Then by Definition (\ref{target-trek-def}), there exist two $r$-itineraries $\eta\in\lambdaitineraries{\Gamma}{a}{b}{r}$ and $\eta'\in \lambdaitineraries{\Gamma}{a'}{b'}{r}$. By Proposition (\ref{diameter-prop}), for all $j\in \{1,\ldots,n\}$:
\begin{equation*}
\|\eta_{j+1}-\eta'_{j+1}\|_{\A_{j+1}}\leq 4r\lambda_j + \|\eta_j-\eta'_j\|_{\A_j}\text{.}
\end{equation*}
By induction, we thus establish:
\begin{equation*}
\|b-b'\|_\B \leq 4r\sum_{j=1}^n \lambda_j + \|a-a'\|_\A = 4r\treklength{\Gamma} + \|a-a'\|_\A\text{.}
\end{equation*}
which proves (4), as we note that the diameter inequality follows by taking $a=a'$.
\end{proof}

Thus far, we have not used the Leibniz property in our work on bridges and treks. Our main reason to work with the class $\LCQMS$ of {\Lqcms s} is indeed the next Proposition, where we see how the Leibniz property allows us to relate the multiplication and our target sets. This relation will be the tool we use to prove that the maps we shall construct in Theorem (\ref{main}) between {\Lqcms s} at quantum propinquity zero is indeed a multiplicative map.

\begin{proposition}\label{product-prop}
Let $(\A,\Lip_\A)$ and $(\B,\Lip_\B)$ be two {\Lqcms s}. Let $\Gamma \in\startrekset{\LCQMS}{\A,\Lip_\A}{\B,\Lip_\B}$ be a $\LCQMS$-trek from $(\A,\Lip_\A)$ to $(\B,\Lip_\B)$. Then, for all $a,a'\in\dom{\Lip_\A}$ and for all $r \geq \max\{\Lip_\A(a),\Lip_\A(a')\}$, if $b\in\targetsettrek{\Gamma}{a}{r}$ and $b'\in\targetsettrek{\Gamma}{a'}{r}$ then:
\begin{equation}
\Jordan{b}{b'} \in \targetsettrek{\Gamma}{\Jordan{a}{a'}}{r\left(4r\treklength{\Gamma} + \|a\|_\A + \|a'\|_\A\right)}\text{,}
\end{equation}
and
\begin{equation}
\Lie{b}{b'} \in \targetsettrek{\Gamma}{\Lie{a}{a'}}{r\left(4r\treklength{\Gamma} + \|a\|_\A + \|a'\|_\A\right)}\text{.}
\end{equation}
\end{proposition}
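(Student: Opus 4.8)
The plan is to unfold both trek target sets into itineraries and then produce an explicit itinerary realizing the Jordan (resp. Lie) product of the endpoints. Write $\Gamma = \left(\A_j,\Lip_j,\gamma_j,\A_{j+1},\Lip_{j+1} : j=1,\ldots,n\right)$ as in Definition (\ref{trek-def}), with $(\A_1,\Lip_1) = (\A,\Lip_\A)$ and $(\A_{n+1},\Lip_{n+1}) = (\B,\Lip_\B)$, and set $s = r\left(4r\treklength{\Gamma} + \|a\|_\A + \|a'\|_\A\right)$. Since $b\in\targetsettrek{\Gamma}{a}{r}$ and $b'\in\targetsettrek{\Gamma}{a'}{r}$, Definition (\ref{target-trek-def}) furnishes itineraries $\eta\in\lambdaitineraries{\Gamma}{a}{b}{r}$ and $\eta'\in\lambdaitineraries{\Gamma}{a'}{b'}{r}$. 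As $\dom{\Lip_j}$ is a Jordan-Lie subalgebra of $\sa{\A_j}$ for each $j$ (the remark following Definition (\ref{JLL-pair-def})), the tuples $\zeta$ and $\xi$ defined by $\zeta_j = \Jordan{\eta_j}{\eta'_j}$ and $\xi_j = \Lie{\eta_j}{\eta'_j}$ consist of self-adjoint elements in the relevant domains, with $\zeta_1 = \Jordan{a}{a'}$, $\zeta_{n+1} = \Jordan{b}{b'}$, $\xi_1 = \Lie{a}{a'}$ and $\xi_{n+1} = \Lie{b}{b'}$. It then suffices to prove that $\zeta\in\lambdaitineraries{\Gamma}{\Jordan{a}{a'}}{\Jordan{b}{b'}}{s}$ and $\xi\in\lambdaitineraries{\Gamma}{\Lie{a}{a'}}{\Lie{b}{b'}}{s}$, since Definition (\ref{target-trek-def}) then gives the conclusion.

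Before verifying the itinerary conditions, I would record the bounds that hold at every node. From $\eta_{j+1}\in\targetsetbridge{\gamma_j}{\eta_j}{r}$ we get $\Lip_{j+1}(\eta_{j+1})\leq r$ and $\bridgenorm{\gamma_j}{\eta_j,\eta_{j+1}}\leq r\,\bridgereach{\gamma_j}{\Lip_j,\Lip_{j+1}}$, and likewise for $\eta'$; moreover, applying Proposition (\ref{fundamental-prop}) successively along the itinerary (as in the proof of Proposition (\ref{fundamental-trek-proposition})) yields $\|\eta_j\|_{\A_j}\leq \|a\|_\A + 2r\treklength{\Gamma}$ and $\|\eta'_j\|_{\A_j}\leq \|a'\|_\A + 2r\treklength{\Gamma}$ for all $j$. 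For the Lip-norm conditions, the Jordan-Lie property of $\Lip_j$ (Definition (\ref{JLL-pair-def})) gives, for every $j$, $\Lip_j(\zeta_j)\leq \|\eta_j\|_{\A_j}\Lip_j(\eta'_j) + \|\eta'_j\|_{\A_j}\Lip_j(\eta_j) \leq r\left(\|a\|_\A + \|a'\|_\A + 4r\treklength{\Gamma}\right) = s$, and the same bound for $\Lip_j(\xi_j)$; at $j=1$ one uses $\Lip_\A(a),\Lip_\A(a')\leq r$ directly.

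The remaining, and only delicate, point is the bridge-seminorm condition $\bridgenorm{\gamma_j}{\zeta_j,\zeta_{j+1}}\leq s\,\bridgereach{\gamma_j}{\Lip_j,\Lip_{j+1}}$ (and its $\xi$-analogue). Writing $u_j = (\eta_j,\eta_{j+1})$ and $u'_j = (\eta'_j,\eta'_{j+1})$ in $\A_j\oplus\A_{j+1}$, the coordinatewise product identifies $(\zeta_j,\zeta_{j+1})$ with $\Jordan{u_j}{u'_j}$ and $(\xi_j,\xi_{j+1})$ with $\Lie{u_j}{u'_j}$, so that $\bridgenorm{\gamma_j}{\zeta_j,\zeta_{j+1}}\leq \tfrac12\left(\bridgenorm{\gamma_j}{u_ju'_j} + \bridgenorm{\gamma_j}{u'_ju_j}\right)$, and likewise for $\xi$. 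Applying the Leibniz-type inequality (\ref{bridgenorm-eq}) to each of $u_ju'_j$ and $u'_ju_j$, and substituting the bounds above — where $\bridgenorm{\gamma_j}{u_j}$ and $\bridgenorm{\gamma_j}{u'_j}$ are at most $r\,\bridgereach{\gamma_j}{\Lip_j,\Lip_{j+1}}$, and the relevant $\A_j$- and $\A_{j+1}$-norms are at most $\|a\|_\A + 2r\treklength{\Gamma}$ or $\|a'\|_\A + 2r\treklength{\Gamma}$ — one finds that both $\bridgenorm{\gamma_j}{u_ju'_j}$ and $\bridgenorm{\gamma_j}{u'_ju_j}$ are bounded by $s\,\bridgereach{\gamma_j}{\Lip_j,\Lip_{j+1}}$, hence so is $\bridgenorm{\gamma_j}{\zeta_j,\zeta_{j+1}}$. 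Combined with the Lip-norm bound, this shows $\zeta_{j+1}\in\targetsetbridge{\gamma_j}{\zeta_j}{s}$ for all $j$, i.e. $\zeta$ is an $s$-itinerary, and identically for $\xi$, finishing the proof. The step that requires care is this last estimate: since inequality (\ref{bridgenorm-eq}) is asymmetric in its two factors, $u_ju'_j$ and $u'_ju_j$ produce different mixed products of norms ($\|\eta_j\|_{\A_j}$ paired with $\|\eta'_{j+1}\|_{\A_{j+1}}$ in one case, $\|\eta'_j\|_{\A_j}$ with $\|\eta_{j+1}\|_{\A_{j+1}}$ in the other), and one must verify that each collapses to exactly $s\,\bridgereach{\gamma_j}{\Lip_j,\Lip_{j+1}}$; everything else is routine bookkeeping, and the Lie case is handled verbatim using $|2i|^{-1} = \tfrac12$ together with the second inequality in Definition (\ref{JLL-pair-def}).
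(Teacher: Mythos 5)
Your proposal is correct and follows essentially the same route as the paper: extract itineraries for $b$ and $b'$, form the entrywise Jordan and Lie products, and verify the $s$-itinerary conditions using the Leibniz-type inequality (\ref{bridgenorm-eq}) for the bridge seminorm together with the norm bounds from Proposition (\ref{fundamental-prop}) and the {\JLL} property of the Lip-norms. The only difference is the cosmetic repackaging through the elements $u_j,u'_j$ of $\A_j\oplus\A_{j+1}$, which encodes exactly the computation the paper performs directly in $\D_j$ (with the roles of $\eta$ and $\eta'$ switched to handle the symmetrization).
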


\begin{proof}
Fix $a,a'\in\dom{\Lip_\A}$, and $r  \geq \max\{\Lip_\A(a),\Lip_\A(a')\}$. By Proposition (\ref{fundamental-trek-proposition}), the sets $\targetsettrek{\Gamma}{a}{r}$ and $\targetsettrek{\Gamma}{a'}{r}$ are not empty since $\Lip_\B$ is lower semi-cont\-inuous as $(\B,\Lip_\B)$ is a {\Lqcms}. Let $b \in \targetsettrek{\Gamma}{a}{r}$, so that by Definition (\ref{target-trek-def}), there exists $\eta\in\lambdaitineraries{\Gamma}{a}{b}{r}$. Similarly, let $b' \in \targetsettrek{\Gamma}{a'}{r}$ and $\eta'\in\lambdaitineraries{\Gamma}{a'}{b'}{r}$. 

We write $\Gamma = \left(\A_j,\Lip_j,\gamma_j,\A_{j+1},\Lip_{j+1} : j=1,\ldots,n\right)$ with the same notations as in the proof of Proposition (\ref{fundamental-trek-proposition}), where $\mathcal{C}$ is understood to be the class {$\LCQMS$} of all {\Lqcms s}. Moreover, for each $j\in\{1,\ldots,n\}$, we write $\gamma_j = (\D_j,\omega_j,\pi_j,\rho_j)$ with $\D_j$ a unital C*-algebra, $\omega_j \in \D_j$ an element with nonempty 1-level, and $\pi_j$ and $\rho_j$ unital *-monomorphism of, respectively, $\A_j$ and $\A_{j+1}$ in $\D_j$. We denote by $\lambda_j$  the bridge length $\bridgelength{\gamma_j}{\Lip_j,\Lip_{j+1}}$ and $\varrho_j$ the reach $\bridgereach{\gamma_j}{\Lip_j,\Lip_{j+1}}$ of the bridge $\gamma_j$ for all $j\in\{1,\ldots,n\}$.

With these notations, we have the following computation (see Equation (\ref{bridgenorm-eq})):
\begin{multline}\label{product-prop-eq1}
\|\pi_{j}(\eta_j\eta'_j)\omega_{j} - \omega_{j}\rho_{j}(\eta_{j+1}\eta'_{j+1})\|_{\D_{j}}\\
\leq \|\eta_j\|_{\A_{j}}\|\pi_{j}(\eta_j')\omega_{j}-\omega_{j}\rho_{j}(\eta'_{j+1})\|_{\D_{j}} \\ + \|\pi_{j}(\eta_j)\omega_{j} - \omega_{j}\rho_{j}(\eta_{j+1})\|_{\D_{j}}\|\eta'_{j+1}\|_{\A_{j+1}}\text{.}
\end{multline}

Now, by Proposition (\ref{fundamental-prop}), we have for all $j\in\{1,\ldots,n+1\}$:
\begin{equation}\label{product-prop-eq2}
\|\eta_j\|_{\A_j} \leq \|a\|_\A + 2r\treklength{\Gamma} \text{ and }\|\eta'_j\|_{\A_j}\leq \|a'\|_{\A} + 2r\treklength{\Gamma} \text{.}
\end{equation}
Hence, bringing Inequalities (\ref{product-prop-eq1}) and (\ref{product-prop-eq2}) together, we obtain:
\begin{equation}\label{product-prop-eq2b}
\begin{split}
\|\pi_{j}(\eta_j\eta'_j)\omega_{j} - \omega_{j}\rho_{j}(\eta_{j+1}\eta'_{j+1})\|_{\D_{j}} &\leq \left(\|a\|_\A+ 2r \treklength{\Gamma}\right)r\lambda_{j} + \left(\|a'\|_\A+ 2r \treklength{\Gamma} \right)r\lambda_{j}\\
&= r\lambda_{j}\left(4r\treklength{\Gamma} + \|a\|_\A + \|a'\|_\A \right)\text{.}
\end{split}
\end{equation}

Hence, noting that we can switch the role of $\eta$ and $\eta'$ in Equation (\ref{product-prop-eq2b}, we have:
\begin{equation}\label{product-prop-eq3}
\begin{split}
\bridgenorm{\gamma_{j}}{\Jordan{\eta_j}{\eta'_j}, \Jordan{\eta_{j+1}}{\eta'_{j+1}}} &= \left\|\pi_{j}\left(\Jordan{\eta_j}{\eta_j'}\right)\omega_{j} - \omega_{j}\rho_{j}\left(\Jordan{\eta_{j+1}}{\eta_{j+1}'}\right)\right\|_{\D_{j}}\\
&\leq\varrho_{j}r\left(4r\treklength{\Gamma} + \|a\|_\A + \|a'\|_\A\right)\text{.}
\end{split}
\end{equation}

On the other hand, since for all $j\in\{1,\ldots,n\}$ the Lip-norm $\Lip_{j+1}$ is a {\JLL} Lip-norm , we have, using Proposition (\ref{fundamental-prop}) again:
\begin{equation}\label{product-prop-eq4}
\begin{split}
\Lip_{j+1}\left(\Jordan{\eta_{j+1}}{\eta'_{j+1}}\right) &\leq \|\eta_{j+1}\|_{\A_{j+1}}\Lip_{j+1}(\eta'_{j+1}) + \|\eta'_{j+1}\|_{\A_{j+1}}\Lip_{j+1}(\eta_{j+1})\\
& \leq \left(\|a\|_\A + 2r\treklength{\Gamma}\right)r + (\|a'\|_\A + 2r\treklength{\Gamma})r\\
&= r \left( 4r\treklength{\Gamma} + \|a\|_\A +\|a'\|_\A \right)\text{.}
\end{split}
\end{equation}

We thus have proven, by Inequalities (\ref{product-prop-eq3}) and (\ref{product-prop-eq4}), that:
\begin{equation*}
(\Jordan{\eta_j}{\eta'_j}: j =1,\ldots,n+1)\in\lambdaitineraries{\Gamma}{\Jordan{a}{a'}}{\Jordan{b}{b'}}{r\left(4r\treklength{\Gamma} + \|a\|_\A + \|a'\|_\A\right)}
\end{equation*}
noting that $\Lip_\A(\Jordan{a}{a'}) \leq (\|a\|_\A + \|a'\|_\A)r$. Thus by Definition (\ref{target-trek-def}):
\begin{equation}\label{j-product-eq}
\Jordan{b}{b'} \in \targetsettrek{\Gamma}{\Jordan{a}{a'}}{r\left(4r\treklength{\Gamma} + \|a\|_\A + \|a'\|_\A\right)}\text{.}
\end{equation}

A similar computation would prove that:
\begin{equation}\label{l-product-eq}
\Lie{b}{b'} \in \targetsettrek{\Gamma}{\Lie{a}{a'}}{r\left(4r\treklength{\Gamma} + \|a\|_\A + \|a'\|_\A\right)}\text{.}
\end{equation}
This completes the proof of our proposition.
\end{proof}

The fundamental property of our quantum propinquity is that it satisfies the axiom of coincidence on the quantum isometry classes of {\Lqcms s}. This constitutes the main theorem of this section, which we now prove.


\begin{theorem}\label{main}
Let $(\A,\Lip_\A)$ and $(\B,\Lip_\B)$ be two {\Lqcms s}. If
\begin{equation*}
\propinquity((\A,\Lip_\A),(\B,\Lip_\B)) = 0\text{,}
\end{equation*}
then there exists a *-isomorphism $h : \A \rightarrow \B$ such that $\Lip_\B\circ h = \Lip_\A$.
\end{theorem}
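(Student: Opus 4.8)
Suppose $\propinquity((\A,\Lip_\A),(\B,\Lip_\B))=0$. Fix a sequence of $\LCQMS$-treks $\Gamma_n$ from $(\A,\Lip_\A)$ to $(\B,\Lip_\B)$ with $\lambda_n:=\treklength{\Gamma_n}\to 0$, and fix a free ultrafilter $\mathcal{U}$ on $\N$. For $a\in\dom{\Lip_\A}$ set $r_a=\Lip_\A(a)$. By Proposition (\ref{fundamental-trek-proposition})(1) the set $\targetsettrek{\Gamma_n}{a}{r_a}$ is nonempty; by part (2) its elements have norm at most $2r_a\lambda_n+\|a\|_\A$, and they all have $\Lip_\B$-value at most $r_a$, so (by lower semicontinuity, as in Remark (\ref{closed-rmk}) and Lemma (\ref{az-lemma})) they lie in a fixed norm-compact subset of $\sa{\B}$. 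Since $\diam{\targetsettrek{\Gamma_n}{a}{r_a}}{\|\cdot\|_\B}\leq 4r_a\lambda_n\to 0$ by Proposition (\ref{fundamental-trek-proposition})(4), the ultrafilter limit $h(a):=\lim_{\mathcal U} b_n$, with $b_n\in\targetsettrek{\Gamma_n}{a}{r_a}$ chosen arbitrarily, exists and is independent of the choices. Passing to ultrafilter limits in Proposition (\ref{fundamental-trek-proposition})(2)--(4) and Proposition (\ref{product-prop}), and using lower semicontinuity of $\Lip_\B$, one checks that $h:\dom{\Lip_\A}\to\sa{\B}$ is $\R$-linear, contractive ($\|h(a)\|_\B\leq\|a\|_\A$), satisfies $\Lip_\B(h(a))\leq\Lip_\A(a)$, sends $t\unit_\A$ to $t\unit_\B$ (since $\targetsettrek{\Gamma_n}{t\unit_\A}{0}=\{t\unit_\B\}$ by the diameter bound), and preserves the Jordan and Lie products. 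As $\dom{\Lip_\A}$ is norm-dense in $\sa{\A}$, $h$ extends uniquely to a contractive $\R$-linear map $\sa{\A}\to\sa{\B}$ still preserving $\Jordan{\cdot}{\cdot}$ and $\Lie{\cdot}{\cdot}$; then $\tilde h(x+iy):=h(x)+ih(y)$ (for $x,y\in\sa{\A}$) defines, via $xy=\Jordan{x}{y}+i\Lie{x}{y}$ and $\C$-bilinearity, a unital $\ast$-homomorphism $\tilde h:\A\to\B$.

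The auxiliary fact I would establish first is a \emph{reversal lemma}. For a bridge $\gamma=(\mathfrak{E},\omega,\pi_\A,\pi_\B)$ from $\A$ to $\B$ and its inverse $\gamma^{-1}=(\mathfrak{E},\omega^\ast,\pi_\B,\pi_\A)$ from Proposition (\ref{triangle-prop}), taking adjoints gives $\bridgenorm{\gamma^{-1}}{b,a}=\bridgenorm{\gamma}{a,b}$ for $a\in\sa{\A}$, $b\in\sa{\B}$, while $\bridgereach{\gamma^{-1}}{\Lip_\B,\Lip_\A}=\bridgereach{\gamma}{\Lip_\A,\Lip_\B}$ (this is contained in Proposition (\ref{triangle-prop})). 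Hence $b\in\targetsetbridge{\gamma}{a}{r}$ if and only if $a\in\targetsetbridge{\gamma^{-1}}{b}{r}$ whenever $r\geq\Lip_\A(a)$ (noting $r\geq\Lip_\B(b)$ is then automatic); reversing the steps of an itinerary upgrades this to $b\in\targetsettrek{\Gamma}{a}{r}\iff a\in\targetsettrek{\Gamma^{-1}}{b}{r}$ for every $r\geq\Lip_\A(a)$. I would also record that target sets are monotone in the spread parameter. Now running the construction of the previous paragraph on the reversed treks $\Gamma_n^{-1}$ (again $\LCQMS$-treks, with $\treklength{\Gamma_n^{-1}}=\lambda_n$ by Proposition (\ref{triangle-prop})) and the \emph{same} ultrafilter $\mathcal{U}$ produces a unital $\ast$-homomorphism $\tilde g:\B\to\A$, with $g$ contractive, $\Lip_\A(g(b))\leq\Lip_\B(b)$, and $g(b)=\lim_{\mathcal U} a_n$ for any $a_n\in\targetsettrek{\Gamma_n^{-1}}{b}{\Lip_\B(b)}$.

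The crux is that $\tilde h$ and $\tilde g$ are mutually inverse. Fix $a\in\dom{\Lip_\A}$, put $r=\Lip_\A(a)$, and pick $b_n\in\targetsettrek{\Gamma_n}{a}{r}$ with $b_n\to h(a)$ along $\mathcal{U}$. By the reversal lemma, $a\in\targetsettrek{\Gamma_n^{-1}}{b_n}{r}$. Since $\Lip_\B(h(a))\leq r$, any $a_n\in\targetsettrek{\Gamma_n^{-1}}{h(a)}{\Lip_\B(h(a))}$ defining $g(h(a))$ also lies in $\targetsettrek{\Gamma_n^{-1}}{h(a)}{r}$ by monotonicity; Proposition (\ref{fundamental-trek-proposition})(4) applied to $\Gamma_n^{-1}$ with spread $r$ then gives $\|a-a_n\|_\A\leq 4r\lambda_n+\|b_n-h(a)\|_\B$, and the ultrafilter limit forces $\|a-g(h(a))\|_\A=0$. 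Thus $g\circ h=\mathrm{id}$ on $\dom{\Lip_\A}$, hence on $\sa{\A}$, and symmetrically $h\circ g=\mathrm{id}_{\sa{\B}}$, so $\tilde h$ is a $\ast$-isomorphism. Finally $\Lip_\A(a)=\Lip_\A(g(h(a)))\leq\Lip_\B(h(a))\leq\Lip_\A(a)$ yields $\Lip_\B\circ h=\Lip_\A$, so $h:=\tilde h$ is the required map (and, by Theorem (\ref{quantum-isometry-prop}), an isometric isomorphism).

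\textbf{Main obstacle.} The easy inequalities $\Lip_\B\circ h\leq\Lip_\A$ and $\|h(a)\|_\B\leq\|a\|_\A$ come directly from Proposition (\ref{fundamental-trek-proposition}); the real difficulty is surjectivity of $\tilde h$ together with the equality $\Lip_\B\circ h=\Lip_\A$, since two unrelated contractive $\ast$-monomorphisms in opposite directions need not compose to the identity. This is resolved by coupling the constructions of $h$ and $g$ through a single ultrafilter and by the reversal lemma, which is precisely what forces $g(h(a))$ and $a$ into the same vanishingly small target set of $\Gamma_n^{-1}$. A secondary (routine) point is upgrading "$h$ preserves $\Jordan{\cdot}{\cdot}$ and $\Lie{\cdot}{\cdot}$" to "$\tilde h$ is multiplicative", handled by $xy=\Jordan{x}{y}+i\Lie{x}{y}$ on self-adjoints and $\C$-bilinear extension.
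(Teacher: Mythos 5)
Your proposal is correct and follows essentially the same route as the paper: you define $h$ as a limit of arbitrarily chosen elements of the target sets $\targetsettrek{\Gamma_n}{a}{\Lip_\A(a)}$, use Propositions (\ref{fundamental-trek-proposition}) and (\ref{product-prop}) to get linearity, contractivity, $\Lip_\B\circ h\leq\Lip_\A$ and the Jordan--Lie morphism property, extend to a unital *-homomorphism, and then build the inverse from the reversed treks with the reversal of itineraries and the distance estimate of Proposition (\ref{fundamental-trek-proposition}) forcing $g\circ h=\mathrm{id}$, exactly as in the paper's final claim. The only real difference is the limiting device: taking limits along one fixed free ultrafilter, together with the vanishing diameter bounds and monotonicity in the spread, replaces the paper's compactness argument in the hyperspace of compact subsets, the diagonal extraction over a countable dense subset of $\dom{\Lip_\A}$, and the paper's careful discussion of why the auxiliary subsequence chosen for the inverse map is immaterial --- a legitimate streamlining of the same argument.
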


\begin{proof}
For every $\varepsilon > 0$, by Definition (\ref{propinquity-def}), there exists an $\LCQMS$-trek $\Gamma_\varepsilon$ from $(\A,\Lip_\A)$ to $(\B,\Lip_\B)$ such that:
\begin{equation*}
\treklength{\Gamma_\varepsilon} \leq \varepsilon \text{.}
\end{equation*}

For each $\varepsilon > 0$, we fix, once and for all, such a $\LCQMS$-trek $\Gamma_\varepsilon$. We present our proof as a succession of claims, followed by their own proof, to expose the main structure of our argument.

\emph{As our first step, given $a\in\dom{\Lip_\A}$, we show how to extract a potential image for $a$ in $\B$ from our target sets $\targetsettrek{\cdot}{a}{\cdot}$, using a compactness argument.}

\begin{claim}\label{main-claim-singleton}
For any $a\in\dom{\Lip_\A}$, any $r\geq \Lip_\A(a)$ and for any strictly increasing function $f: \N \rightarrow \N_+$, there exists a strictly increasing function $g : \N \rightarrow \N$ such that the sequence
\begin{equation*}
\left(\targetsettrek{\Gamma_{(f\circ g(n))^{-1}}}{a}{r}\right)_{n\in\N}
\end{equation*}
converges to a singleton in the Haus\-dorff distance $\Haus{\B}$ induced by $\|\cdot\|_\B$ on the bounded subsets of $\B$. Moreover, if we denote this singleton by $\{b\}$ then, for any sequence $(b_m)_{m\in\N}$ in $\B$, we have the following property:
\begin{equation*}\label{unique-limit-eq0}
\left(\forall m\in\N\quad b_m \in \targetsettrek{\Gamma_{(f\circ g(m))^{-1}}}{a}{r} \right)\implies \lim_{m\rightarrow\infty} b_m = b\text{.}
\end{equation*}
\end{claim}

Fix $a \in \dom{\Lip_\A}$, and let $r \geq \Lip_\A(a)$. Let $f : \N\rightarrow\N_+$ be any strictly increasing function. Let $\varepsilon \in (0,1)$. By Inequality (\ref{fundamental-trek-prop-basic-eq}) of Proposition (\ref{fundamental-trek-proposition}), the set $\targetsettrek{\Gamma_\varepsilon}{a}{r}$ is a subset of the set:
\begin{equation*}
\mathfrak{c}(a,r) = \{ b \in \sa{\B} : \Lip_\B(b) \leq r \text{ and }\|b\|_\B \leq 2r + \|a\|_\A\}\text{.}
\end{equation*}
By \cite{Rieffel99} and Lemma (\ref{az-lemma}), as $\Lip_\B$ is a closed Lip-norm, the set $\mathfrak{c}(r,a)$ is compact for the norm topology of $\sa{\B}$. Moreover, the set $\targetsettrek{\Gamma_\varepsilon}{a}{r}$ is itself a norm compact subset of $\mathfrak{c}(a,r)$. The metric space consisting of the compact subsets of $\mathfrak{c}(a,r)$ endowed with the Haus\-dorff distance $\Haus{\B}$ induced by the norm $\|\cdot\|_\B$ of $\B$ is itself compact since $\mathfrak{c}(a,r)$ is compact. Thus, there exists a strictly increasing function $g :\N \rightarrow \N$ such that $\left(\targetsettrek{\Gamma_{(f\circ g)(m)^{-1}}}{a}{r}\right)_{m\in\N}$ converges for $\Haus{\B}$ to some compact subset of $\mathfrak{c}(r,a)$. We denote the (closed) limit of $\left(\targetsettrek{\Gamma_{(f\circ g(n))^{-1}}}{a}{r}\right)_{m\in\N}$ by $\mathfrak{P}_{f\circ g}(a,r)$.

We now prove that $\mathfrak{P}_{f\circ g}(a,r)$ is a singleton. By Inequality (\ref{fundamental-trek-prop-diameter-eq}) of Proposition (\ref{fundamental-trek-proposition}), we conclude that:
\begin{equation}\label{zero-diameter-eq}
\diam{\mathfrak{P}_{f\circ g}(a,r)}{\|\cdot\|_\B} = 0\text{.}
\end{equation}
For each $m\in\N$, let $b_m \in \targetsettrek{\Gamma_{(f\circ g(m))^{-1}}}{a}{r}$ be chosen arbitrarily (which is possible since $\targetsettrek{\Gamma_{(f\circ g(m))^{-1}}}{a}{r}$ is not empty by Proposition (\ref{fundamental-trek-proposition})). Then Proposition (\ref{fundamental-trek-proposition}) proves that $(b_m)_{m\in\N}$ is a Cauchy sequence in $\sa{\B}$ for the norm of $\B$. Indeed, let $\varepsilon > 0$. Then by Inequality (\ref{fundamental-trek-prop-diameter-eq}) of Proposition (\ref{fundamental-trek-proposition}), there exists $M \in \N$ such that for all $m\geq M$, we have:
\begin{equation}\label{main-cauchy-eq0}
\diam{\targetsettrek{\Gamma_{(f\circ g(m))^{-1}}}{a}{r}}{\|\cdot\|_\B}<\frac{1}{2}\varepsilon\text{.}
\end{equation}
There also exists $M' \in\N$ such that, for all $p,q \geq M'$, we have:
\begin{equation}\label{main-cauchy-eq1}
\Haus{\B}\left(\targetsettrek{\Gamma_{(f\circ g(p))^{-1}}}{a}{r},\targetsettrek{\Gamma_{(f\circ g(q))^{-1}}}{a}{r}\right) < \frac{1}{2}\varepsilon\text{,}
\end{equation}
as $\left(\targetsettrek{\Gamma_{(f\circ g(m))^{-1}}}{a}{r}\right)_{m\in\N}$ converges, and hence is Cauchy, for $\Haus{\B}$.

Let $p,q \geq \max\{M,M'\}$. By Inequality (\ref{main-cauchy-eq1}), there exists:
\begin{equation*}
c_q\in\targetsettrek{\Gamma_{(f\circ g(q))^{-1}}}{a}{r}
\end{equation*}
such that $\|c_q-b_p\|_\B~\leq~\frac{1}{2}\varepsilon$. By Inequality (\ref{main-cauchy-eq0}), we also have $\|b_q-c_q\|_\B~\leq~\frac{1}{2}\varepsilon$. Hence $\|b_p-b_q\|_\B~\leq~\varepsilon$. Thus $(b_m)_{m\in\N}$ is indeed Cauchy in $\sa{\B}$. Since $\sa{\B}$ is complete, the sequence $(b_m)_{m\in\N}$ converges. Let us temporarily denote its limit by $b$. 

It is easy to check that $b\in\mathfrak{P}_{f\circ g}(a,r)$: for any $\varepsilon > 0$, there exists $M \in \N$ such that for all $m\geq M$, the diameter of $\targetsettrek{\Gamma_{(f\circ g(m))^{-1}}}{a}{r}$ is less than $\frac{1}{2}\varepsilon$, and there exists $M'\in\N$ such that for all $m\geq M'$, we have $\|b_m - b\|_\B<\frac{1}{2}\varepsilon$, so for all $m\geq \max\{M,M'\}$, the set $\targetsettrek{\Gamma_{(f\circ g(m))^{-1}}}{a}{r}$ lies within the open $\varepsilon$-neighborhood of $b$ for $\|\cdot\|_\B$. So $\{b\} \in \mathfrak{P}_{f\circ g}(a,r)$ by definition. Hence $\mathfrak{P}_{f\circ g}(a,r) = \{ b \}$ as desired.

\bigskip
\emph{Our next step is to choose images for all elements in $\dom{\Lip_\A}$ in a coherent fashion, based upon Claim (\ref{main-claim-singleton}) and a diagonal argument:}

\begin{claim}\label{main-claim-map}
There exists an increasing function $f : \N \rightarrow \N_+$ and a function $h : \dom{\Lip_\A} \rightarrow \dom{\Lip_\B}$ such that, for any $a\in \dom{\Lip_\A}$ and for any $r\geq \Lip_\A(a)$, we have:
\begin{equation*}
\lim_{n\rightarrow\infty} \Haus{\B}\left(\targetsettrek{\Gamma_{f(n)^{-1}}}{a}{r}, \{ h(a) \}\right) = 0 \text{.}
\end{equation*}
Moreover, for any $a\in\dom{\Lip_\A}$, any sequence $(b_m)_{m\in\N}$ in $\B$ and any $r\geq \Lip_\A(a)$, we have:
\begin{equation*}
\left(\forall m\in\N\quad b_m \in \targetsettrek{\Gamma_{f(m)^{-1}}}{a}{r} \right)\implies \lim_{m\rightarrow\infty} b_m = h(a)\text{.}
\end{equation*}
\end{claim}

Let $\mathfrak{a}=\{a_k : k\in\N\}$ be a countable, dense subset of $\dom{\Lip_\A}$. To ease notations, let $l_k = \Lip_\A(a_k)$ for all $k\in\N$. By Claim (\ref{main-claim-singleton}), for each $k\in\N$, there exists $g_k~:~\N~\rightarrow~\N_+$ strictly increasing, such that $\left(\targetsettrek{\Gamma_{g_k(m)^{-1}}}{a_k}{l_k}\right)_{n\in\N}$ converges to some singleton $\{ h(a_k) \}$ in $\B$ for $\Haus{\B}$. One then easily checks that, setting:
\begin{equation*}
f: n \in \N \longmapsto g_0 \circ g_1 \circ\ldots g_n(n)\text{,}
\end{equation*}
the sequence $\left(\targetsettrek{\Gamma_{f(m)^{-1}}}{a_k}{l_k}\right)_{n\in\N}$ converges in $\Haus{\B}$ to $\{h(a_k)\}$ for all $k\in\N$. Moreover, by Claim (\ref{main-claim-singleton}), for all sequences $(b_m)_{m\in\N}\in\sa{\B}$ and $k\in\N$, we have:
\begin{equation}\label{unique-limit-eq0}
\left(\forall m\in\N\quad b_m \in \targetsettrek{\Gamma_{f(m)^{-1}}}{a_k}{l_k} \right)\implies \lim_{m\rightarrow\infty} b_m = h(a)\text{.}
\end{equation}

\bigskip
\emph{We now show that we can relax the choice of the constant $r$ and obtain the same limit.}

Let $a\in\mathfrak{a}$ and let $r \geq \Lip_\A(a)$. Let $q:\N\rightarrow\N$ be a strictly increasing function. By Claim (\ref{main-claim-singleton}), there exists a strictly increasing function $g : \N\rightarrow\N$ such that $\left(\targetsettrek{\Gamma_{(f\circ q\circ g(m))^{-1}}}{a}{r}\right)_{m\in\N}$ converges to some singleton $\{b\}$ for $\Haus{\B}$. On the other hand, by construction, for all $m\in\N$:
\begin{equation}\label{main-claim-map-eq1}
\emptyset\not= \targetsettrek{\Gamma_{(f\circ q\circ g(m))^{-1}}}{a}{\Lip_\A(a)} \subseteq \targetsettrek{\Gamma_{(f\circ q\circ g(m))^{-1}}}{a}{r}\text{.}
\end{equation}
Let $(b_m)_{m\in\N}$ be a sequence in $\B$ such that for all $m\in\N$, we have
\begin{equation*}
b_m \in \targetsettrek{\Gamma_{(f\circ q \circ g(m))^{-1}}}{a}{\Lip_\A(a)}
\end{equation*}
(such sequence exists since by Proposition (\ref{fundamental-trek-proposition}), these target sets are not empty). By Claim (\ref{main-claim-singleton}), the sequence $(b_m)_{m\in\N}$ converges to $h(a)$. By Inclusion (\ref{main-claim-map-eq1}), $(b_m)_{m\in\N}$ satisfies the property that $b_m\in\targetsettrek{\Gamma_{(f\circ q\circ g(m))^{-1}}}{a}{r}$ for all $m\in\N$. Hence, again by Claim (\ref{main-claim-singleton}), we conclude that $b = \lim_{m\rightarrow\infty}b_m = h(a)$.

Thus, as $q$ was an arbitrary strictly increasing function, we conclude that every sub-sequence of $\left(\targetsettrek{\Gamma_{f(m)^{-1}}}{a}{r}\right)_{m\in\N}$ admits a sub-sequence converging to the set $\{h(a)\}$ in $\Haus{\B}$. Thus:
\begin{equation}\label{unique-limit-eq1}
\forall a\in\mathfrak{a}\;\forall r\geq\Lip_\A(a)\quad \lim_{m\rightarrow\infty} \Haus{\B}\left(\targetsettrek{\Gamma_{f(m)^{-1}}}{a}{r},\{h(a)\}\right) = 0\text{.}
\end{equation}
where $\Haus{\B}$ is the Haus\-dorff distance induced by $\|\cdot\|_\B$ on nonempty bounded subsets of $\B$.

\bigskip
\emph{We now extend $h$ thus defined to all of $\dom{\Lip_\A}$.}

Let $a\in\dom{\Lip_\A}$ be chosen. Let $\varepsilon > 0$. There exists $a' \in \mathfrak{a}$ with $\|a-a'\|_\A<\frac{\varepsilon}{3}$. Let $ r \geq \max\{\Lip_\A(a),\Lip_\A(a')\}$. Using Equation (\ref{unique-limit-eq1}), let $M\in\N$ such that, for all $m\geq M$, we have:
\begin{equation*}
\Haus{\B}\left(\targetsettrek{\Gamma_{f(m)^{-1}}}{a'}{r},\{h(a')\}\right) < \frac{\varepsilon}{3}\text{.}
\end{equation*}
Let $M'\in\N$ such that for all $m\geq M'$, we have:
\begin{equation*}
f(m)^{-1} < \frac{\varepsilon}{12r}\text{.}
\end{equation*}
Let $m\geq\max\{M,M'\}$. For all $b \in \targetsettrek{\Gamma_{f(m)^{-1}}}{a}{r}$ and $b' \in \targetsettrek{\Gamma_{f(m)^{-1}}}{a'}{r}$, by Inequality (\ref{fundamental-trek-prop-distance-eq}) of Proposition (\ref{fundamental-trek-proposition}), we have
\begin{equation*}
\|b-b'\|_\B \leq \|a-a'\|_\A + 4rf(m)^{-1} \leq \frac{2\varepsilon}{3}\text{.}
\end{equation*}
Thus, for $m~\geq~\max\{M,M'\}$, the target set $\targetsettrek{\Gamma_{f(m)^{-1}}}{a}{r}$ is a subset of the $\frac{2\varepsilon}{3}$-neighborhood of $\targetsettrek{\Gamma_{f(m)^{-1}}}{a'}{r}$. Hence, $\targetsettrek{\Gamma_{f(m)^{-1}}}{a}{r}$ lies within the closed ball of center $h(a')$ and radius $\varepsilon$. By the triangle inequality, we deduce that the sequence $(\targetsettrek{\Gamma_{f(m)^{-1}}}{a}{r})_{m\in\N}$ is Cauchy for the Haus\-dorff distance $\Haus{\B}$. Now, since the Haus\-dorff distance associated to a complete distance is itself complete, $(\targetsettrek{\Gamma_{f(m)^{-1}}}{a}{r})_{m\in\N}$ converges as well for $\Haus{\B}$. Let $\mathfrak{p}(a,r)$ be its limit. By Claim (\ref{main-claim-singleton}), $\mathfrak{p}(a,r)$ is a singleton consisting of the limit of any sequence $(b_m)_{m\in\N}$ chosen so that $b_m\in\mathfrak{P}(a,f^{-1}(m),r)$ for all $m\in\N$ and for all $r \geq \Lip_\A(a)$. We denote this singleton by $\{h(a)\}$.

\bigskip
We thus have defined a map $h : \dom{\Lip_\A} \rightarrow \sa{\B}$. \emph{We now enter the third  phase of our construction of $h$, when we use our understanding of the target sets to study $h$}.
\begin{claim}\label{main-claim-linear}
The map $h : \dom{\Lip_\A} \rightarrow \dom{\Lip_\B}$ is $\R$-linear and such that $\Lip_\B\circ h \leq \Lip_\A$ on $\dom{\Lip_\A}$. Moreover, $h$ has norm at most one and thus can be extended to an $\R$-linear map, still denoted by $h$, from $\sa{\A}$ to $\sa{\B}$, of norm at most one.
\end{claim}
Let $a,a' \in \dom{\Lip_\A}$ and $t \in \R$. Let $r \geq \max\{\Lip_\A(a),\Lip_\A(a')\}$. For all $m\in\N$, we let $b_m \in \targetsettrek{\Gamma_{f(m)^{-1}}}{a}{r}$ and $b_m'\in\targetsettrek{\Gamma_{f(m)^{-1}}}{a'}{r}$. By Claim (\ref{main-claim-map}), we have $\lim_{m\rightarrow\infty}b_m = h(a)$ and $\lim_{m\rightarrow\infty}b'_m = h(a')$.

Now, by Identity (\ref{fundamental-trek-prop-linear-eq}) of Proposition (\ref{fundamental-trek-proposition}):
\begin{equation*}
b_m+tb'_m \in \targetsettrek{\Gamma_{f(m)^{-1}}}{a+ta'}{(1+|t|)r}
\end{equation*}
for all $m\in\N$. Since:
\begin{equation*}
\Lip_\A(a+ta')\leq \Lip_\A(a) + |t|\Lip_\A(a') \leq (1+|t|)r
\end{equation*} 
by construction, we conclude from Claim (\ref{main-claim-map}) that:
\begin{equation*}
h(a+ta') = \lim_{m\rightarrow\infty} (b_m+tb'_m) = \lim_{m\rightarrow\infty}b_m + t\lim_{m\rightarrow \infty} b'_m = h(a) + th(a')\text{,}
\end{equation*}
as desired. \emph{Hence, $h$ is linear on $\dom{\Lip_\A}$.}

\bigskip

Now, let $a\in\dom{\Lip_\A}$ and let $(b_m)_{m\in\N}$ be a sequence in $\sa{\B}$ such that for all $m\in\N$, we have $b_m\in\targetsettrek{\Gamma_{f(m)^{-1}}}{a}{\Lip_\A(a)}$. Since $\Lip_\B$ is lower-semi-cont\-inuous by assumption, and by Definition (\ref{trek-def}), we have:
\begin{equation*}\label{h-lip-eq}
\Lip_\B(h(a)) \leq \liminf_{m\rightarrow\infty} \Lip_\B(b_m) \leq \Lip_\A(a) \text{.}
\end{equation*}
Moreover, by Inequality (\ref{fundamental-trek-prop-basic-eq}) of Proposition (\ref{fundamental-trek-proposition}), we can prove:
\begin{equation}\label{h-norm-eq}
\|h(a)\|_\B = \lim_{m\rightarrow\infty} \|b_m\|_\B \leq \lim_{m\rightarrow\infty} \left(\|a\|_\A + 2\Lip_\A(a)f(m)^{-1}\right) = \|a\|_\A\text{.}
\end{equation}

Hence, $h$ is a uniformly continuous linear map from $\dom{\Lip_\A}$, and thus it extends uniquely to a continuous linear map from $\sa{\A}$ to $\sa{\B}$, which we still denote by $h$. We note that the norm of $h$ is, at most, one.

\bigskip
\emph{We now turn to the multiplicative properties of $h$.}
\begin{claim}\label{main-claim-Jordan}
The map $h : \sa{\A}\rightarrow\sa{\B}$ is a unital Jordan-Lie algebra homomorphism of norm $1$.
\end{claim}

Let $a,a' \in \dom{\Lip_\A}$ and $r \geq \max\{\Lip_\A(a),\Lip_\A(a')\}$. Let $m\in\N$ and choose $b_m\in \targetsettrek{\Gamma_{f(m)^{-1}}}{a}{r}$ and $b_m'\in\targetsettrek{\Gamma_{f(m)^{-1}}}{a'}{r}$. By Proposition (\ref{product-prop}), we have
\begin{equation*}
\Jordan{b_m}{b_m'} \in \targetsettrek{\Gamma_{f(m)^{-1}}}{\Jordan{a}{a'}}{r\left(\|a\|_\A + \|a'\|_\A + 4rf(m)^{-1}\right)}\text{.}
\end{equation*} 
Since $\Lip_\A$ is Leibniz:
\begin{equation*}
\Lip_\A\left(\Jordan{a}{a'}\right) \leq \|a\|_\A\Lip_\A(a') + \|a'\|_\A\Lip_\A(a) \leq r(\|a\|_\A + \|a'\|_\A) \leq r'\text{.}
\end{equation*}
Thus, we conclude by Claim (\ref{main-claim-map}) that:
\begin{equation*}
h(\Jordan{a}{a'}) = \lim_{m\rightarrow\infty} \Jordan{b_m}{b_m'} = \Jordan{\lim_{m\rightarrow\infty}b_m}{\lim_{m\rightarrow\infty}b_m'} = \Jordan{h(a)}{h(a')} \text{.}
\end{equation*}

Similarly, we would prove that $\Lie{h(a)}{h(a')} = h(\Lie{a}{a'})$. 

\bigskip
From the construction of $h$, since $\unit_\B \in \targetsettrek{\Gamma_{\varepsilon}}{\unit_\A}{r}$ for any $\varepsilon>0$ and $r\geq 0$, we conclude easily that $h(\unit_\A) = \unit_\B$. Since $h$ is a linear map of norm at most $1$ by Claim (\ref{main-claim-linear}), we conclude that $h$ has norm one.

\emph{Thus $h : \sa{\A} \rightarrow\sa{\B}$ is a homomorphism for $\Jordan{\cdot}{\cdot}$ and $\Lie{\cdot}{\cdot}$ as well as a linear map of norm $1$, such that $\Lip_\B\circ h \leq \Lip_\A$ on $\dom{\Lip_\A}$.}
\bigskip
\begin{claim}\label{main-claim-extension}
The continuous unital Jordan-Lie homomorphism $h : \sa{\A}\rightarrow\sa{\B}$ extends uniquely to a unital *-homomorphism $\A\rightarrow\B$, still denoted by $h$.
\end{claim}

Let $\Re : a\in\A\mapsto\frac{a+a^\ast}{2}$ and $\Im:a\in\A \mapsto\frac{a-a^\ast}{2i}$ be, respectively, the real and imaginary part on $\A$, which are both valued in $\sa{\A}$. We set, for all $a\in\A$:
\begin{equation*}
h(a) = h(\Re(a)) + ih(\Im(a))\text{,}
\end{equation*}
and we trivially check that this definition of $h$ is consistent on $\sa{\A}$. Moreover, it is straightforward that $h$ thus extended is a continuous linear map on $\A$, with values in $\B$. Moreover, by construction, $h(a^\ast) = h(a)^\ast$ for all $a\in\A$. So $h$ is a $\ast$-preserving linear map on $\A$.

Let $a,b \in \A$ be given. Using the fact that $h$, restricted to $\sa{\A}$, is a homomorphism for the Jordan product by Claim (\ref{main-claim-Jordan}), and $h$ is linear on $\A$, we have:
\begin{equation}\label{j-product-eq1}
\begin{split}
h(\Jordan{a}{b}) &= h(\Jordan{\Re(a)}{Re(b)}) + i h(\Jordan{\Re(a)}{\Im(b)}) \\
&\quad + ih(\Jordan{\Im(a)}{\Re(b)}) - h(\Jordan{\Im(a)}{\Im(b)})\\
&= \Jordan{h(\Re(a))}{h(\Re(b))} + i\Jordan{h(\Re(a))}{h(\Im(b))} \\
&\quad + i\Jordan{h(\Im(a))}{h(\Re(b))} - \Jordan{h(\Im(a))}{h(\Im(b))}\\
&= \Jordan{h(a)}{h(b)} \text{.}
\end{split}
\end{equation}
Again, the computation carries similarly to the Lie product.

To conclude, for all $a,b\in\A$, by Equation (\ref{j-product-eq1}) and its equivalent for the Lie product, as well as by linearity of $h$:
\begin{equation*}
h(ab) = h(\Jordan{a}{b}) + ih(\Lie{a}{b}) = \Jordan{h(a)}{h(b)} + i\Lie{h(a)}{h(b)} = h(a)h(b) \text{.}
\end{equation*}

\emph{We have thus proven that $h$ is a unital *-homomorphism from $\A$ to $\B$ with $\Lip_\B\circ h \leq \Lip_\A$ on $\dom{\Lip_\A}$.} This completes our construction of $h$. Now, we conclude our proof with the following last claim.

\bigskip
\begin{claim}
The *-homomorphism $h:\A\rightarrow\B$ is a *-isomorphism onto $\B$, such that for all $a\in \dom{\Lip_\A}$ we have $\Lip_\B(h(a)) = \Lip_\A(a)$.
\end{claim}

This last step of our proof consists in constructing the inverse of $h$ using the same technique as used for the construction of $h$ itself.

We recall the following construction from Proposition (\ref{triangle-prop}). For any bridge $\gamma = (\D,\omega,\pi,\rho)$, we define $\gamma^{-1}=(\D,\omega^\ast,\rho,\pi)$. For any $\LCQMS$-trek
\begin{equation*}
\Gamma = (\A_j,\Lip_j,\gamma_j,\A_{j+1},\Lip_{j+1} : j=1,\ldots,n)\text{,}
\end{equation*}
from $(\A,\Lip_\A)$ to $(\B,\Lip_\B)$, the inverse $\LCQMS$-trek from $(\B,\Lip_\B)$ to $(\A,\Lip_\A)$ is defined as
\begin{equation*}
\Gamma^{-1} = (\A_{n+2-j},\Lip_{n+2-j},\gamma_j^{-1},\A_{n+1-j},\Lip_{n+1-j}:j=1,\ldots,n)\text{.}
\end{equation*}
By Definition (\ref{lambda-itinerary-def}), we then note that, for any $\LCQMS$-trek $\Gamma$ from $(\A,\Lip_\A)$ to $(\B,\Lip_\B)$, and for any $a\in\A$ and $b\in\B$, we have:
\begin{equation*}
(\eta_j)_{j=1}^{n+1} \in \lambdaitineraries{\Gamma}{a}{b}{r} \iff (\eta_{n+2-j})_{j=1}^{n+1} \in \lambdaitineraries{\Gamma^{-1}}{b}{a}{r} \text{.}
\end{equation*}

With these notations, we may apply the construction we carried out in Claim (\ref{main-claim-singleton})--(\ref{main-claim-extension}) to construct a *-homomorphism $s:\B\rightarrow\A$ and a strictly increasing map $g : \N\rightarrow\N$ with the following properties:
\begin{enumerate}
\item For all $b\in\dom{\Lip_\B}$, we have $\Lip_\A(s(b))\leq\Lip_\B(b)$.
\item Let $b\in\dom{\Lip_\B}$ and $r\geq\Lip_\B(b)$. If $(a_m)_{m\in\N}$ is a sequence in $\A$ such that for any $m\in\N$, we have $a_m \in \targetsettrek{\Gamma^{-1}_{(f\circ g(m))^{-1}}}{b}{r}$, then $(a_m)_{m\in\N}$ converges to $s(b)$ in $\A$.
\end{enumerate}

We pause this proof for an important remark. It is essential to note that we applied the construction carried out in Claims (\ref{main-claim-singleton})--(\ref{main-claim-extension}), not to the sequence of treks $\left(\Gamma_{n^{-1}}\right)_{n\in\N,n>0}$, but to its sub-sequence $\left(\Gamma_{f(m)^{-1}}\right)_{n\in\N}$, which was already used to construct $h$. Indeed, if we started from the original sequence of treks, then the construction of $s$ would require us to make a choice again, which would have no reason to match the choice for $h$, and thus $s\circ h$ could only be expected to be an isometric automorphism, rather than the identity. We wrote Claim (\ref{main-claim-singleton}) with an arbitrary sub-sequence of $\left(\Gamma_{n^{-1}}\right)_{n\in\N,n>0}$ precisely for this purpose.

This said, the construction of $s$ using the sub-sequence of treks $\left(\Gamma_{f(n)^{-1}}\right)_{n\in\N}$ also involves the choice of a sub-sequence of $\left(\Gamma_{f\circ g(n)^{-1}}\right)_{n\in\N}$ by compactness, which would seem to violate the uniqueness of the inverse --- as we desire to prove in this claim that $s$, indeed, in the inverse of $h$, and thus the chosen function $g$ should not matter. As the careful reader will however notice, we actually will prove in this claim that, actually, we could chose $g$ to be the identity. Indeed, if $g':\N\rightarrow\N_+$ is \emph{any} another strictly increasing function, then there exists, again by Claims (\ref{main-claim-singleton})--(\ref{main-claim-extension}), a strictly increasing function $g'':\N\rightarrow\N_+$ and a unital *-morphism $s'':\B\rightarrow\A$ satisfying the same conditions (1) and (2) as $g$ and $s$, except for $f$ being replaced by $f\circ g'$. As the proof to follow will show, the resulting *-morphism $s''$ will satisfy the same property as $s$, which will imply $s=s''$. Thus, the choice of a particular $s$ and $g$ are, in fact, illusory: all choices would lead to the same answer. We now return to our main proof.

We claim that $s$ is the inverse of $h$. To do so, let $a\in \dom{\Lip_\A}$ and $r \geq \Lip_\A(a)$. Let $(b_m)_{m\in\N} \in\sa{\B}^\N$ with $b_m\in\targetsettrek{\Gamma_{(f\circ g(m))^{-1}}}{a}{r}$ for all $m\in\N$ and note that $\lim_{m\rightarrow\infty}b_m = h(a)$ by Claim (\ref{main-claim-map}). Similarly, let $(a_m)_{m\in\N}\in\sa{\A}^\N$ such that for all $m\in\N$, we have $a_m \in \targetsettrek{\Gamma^{-1}_{(f\circ g(m))^{-1}}}{h(a)}{r}$ (note that $r \geq \Lip_\A(a)\geq \Lip_\B(h(a))$). Again, we have $\lim_{m\rightarrow\infty} a_m = s(h(a))$. The key observation here is that by definition, since $b_m \in \targetsettrek{\Gamma_{(f\circ g(m))^{-1}}}{a}{r}$, the set $\lambdaitineraries{\Gamma_{(f\circ g(m))^{-1}}}{a}{b_m}{r}$ is not empty, and thus $\lambdaitineraries{\Gamma^{-1}_{(f\circ g(m))^{-1}}}{b_m}{a}{r}$ is not empty, so $a \in \targetsettrek{\Gamma^{-1}_{(f\circ g(m))^{-1}}}{b_m}{r}$.

Let $\varepsilon > 0$. Let $M\in \N$ such that for all $m\geq M$, we have $\|b_m - h(a)\|_\B \leq \frac{1}{3}\varepsilon$. Let $M'\in\N$ be chosen so that for all $m\geq M'$ we have $\|a_m - s(h(a))\|_\A < \frac{1}{3}\varepsilon$. Let $M'' \in \N$ such that $(f\circ g)^{-1}(m)< \frac{\varepsilon}{12r}$ for all $m\geq M''$. Let $m\geq \max\{M,M',M''\}$. By Inequality (\ref{fundamental-trek-prop-distance-eq}) of Proposition (\ref{fundamental-trek-proposition}), since $a\in\targetsettrek{\Gamma^{-1}_{(f\circ g(m))^{-1}}}{b_m}{r}$ and $a_m \in \targetsettrek{\Gamma^{-1}_{(f\circ g(m))^{-1}}}{h(a)}{r}$, we have:
\begin{equation*}
\|a-a_m\|_\A \leq 4r\varepsilon + \|b_m - h(a)\|_\B \leq \frac{2}{3}\varepsilon \text{.}
\end{equation*} 
Hence:
\begin{equation*}
\|a-s(h(a))\|_\A \leq \|a-a_m\|_\A + \|a_m - s(h(a))\|_\A \leq \varepsilon\text{.}
\end{equation*}
As $\varepsilon > 0$ was arbitrary, we conclude that $a = s(h(a))$ for all $a\in \dom{\Lip_\A}$.
Now, since $\dom{\Lip_\A}$ is total in $\A$ and $h$, $s$ are *-homomorphisms, we conclude that:
\begin{equation*}
\forall a\in\A \quad a = s(h(a)) \text{.}
\end{equation*}
Similarly, we would prove that for all $b\in\B$, we have $b = h(s(b))$. Thus \emph{$h$ is a *-isomorphism from $A$ onto $\B$}. In particular, we conclude:
\begin{enumerate}
\item For all $a\in \dom{\Lip_\A}$, we have $\Lip_\A(a) \geq \Lip_\B(h(a)) \geq \Lip_\A(s(h(a))) = \Lip_\A(a)$, so $\Lip_\B\circ h = \Lip_\A$ on $\dom{\Lip_\A}$.
\item Similarly, $\Lip_\A\circ s = \Lip_\B$ and $s : \B\rightarrow\A$ is a *-isomorphism.
\end{enumerate}
This concludes the proof of our main theorem.
\end{proof}

\begin{corollary}
Let $\mathcal{C}$ be a nonempty subclass of $\LCQMS$, i.e. a class of {\Lqcms s}. Then, for any $(\A,\Lip_\A), (\B,\Lip_\B) \in \mathcal{C}$, if we have:
\begin{equation*}
\propinquity_{\mathcal{C}}((\A,\Lip_\A,\B,\Lip_\B)) = 0
\end{equation*}
then there exists a *-isomorphism $h : \A \rightarrow \B$ such that $\Lip_\B\circ h = \Lip_\A$.
\end{corollary}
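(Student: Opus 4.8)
The plan is to reduce the corollary immediately to Theorem \ref{main} via a monotonicity observation already recorded in the remark following Definition \ref{propinquity-def}. Concretely, since $\mathcal{C}$ is by hypothesis a subclass of $\LCQMS$, every $\mathcal{C}$-trek from $(\A,\Lip_\A)$ to $(\B,\Lip_\B)$ is \emph{a fortiori} an $\LCQMS$-trek: each intermediate {\qcms} appearing in it lies in $\mathcal{C}\subseteq\LCQMS$, and the defining conditions on the bridges $\gamma_j$ (Definition \ref{bridge-def}) do not refer to the ambient class at all. Hence
\begin{equation*}
\startrekset{\mathcal{C}}{\A,\Lip_\A}{\B,\Lip_\B} \subseteq \startrekset{\LCQMS}{\A,\Lip_\A}{\B,\Lip_\B}\text{.}
\end{equation*}
Moreover the length $\treklength{\Gamma}$ of a trek $\Gamma$ (Definition \ref{treklength-def}) is computed from the reaches and heights of its constituent bridges for the Lip-norms at the endpoints of each bridge, quantities that are intrinsic to the bridge and to those two {\Lqcms s}, and therefore do not change when $\Gamma$ is reinterpreted as an $\LCQMS$-trek.

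It follows that the infimum defining $\propinquity((\A,\Lip_\A),(\B,\Lip_\B)) = \propinquity_{\LCQMS}((\A,\Lip_\A),(\B,\Lip_\B))$ is taken over a superset of the index set for $\propinquity_{\mathcal{C}}((\A,\Lip_\A),(\B,\Lip_\B))$, so
\begin{equation*}
\propinquity((\A,\Lip_\A),(\B,\Lip_\B)) \leq \propinquity_{\mathcal{C}}((\A,\Lip_\A),(\B,\Lip_\B)) = 0\text{,}
\end{equation*}
and since the quantum propinquity is non-negative (indeed, it is real-valued by Proposition \ref{first-bridge-prop}), we get $\propinquity((\A,\Lip_\A),(\B,\Lip_\B)) = 0$. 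Theorem \ref{main} then yields directly a $\ast$-isomorphism $h : \A\rightarrow\B$ with $\Lip_\B\circ h = \Lip_\A$, which is exactly the assertion to be proved.

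There is no real obstacle here: the only point requiring a word of care is verifying that a $\mathcal{C}$-trek genuinely qualifies as an $\LCQMS$-trek and that its length is unchanged under this reinterpretation, which is immediate from Definitions \ref{trek-def}, \ref{bridgelength-def} and \ref{treklength-def} together with the inclusion $\mathcal{C}\subseteq\LCQMS$. One could alternatively phrase the proof by simply invoking the cited remark that $\propinquity_{\mathcal{C}}$ dominates $\propinquity_{\LCQMS}$ for any nonempty $\mathcal{C}\subseteq\LCQMS$, and then applying Theorem \ref{main}; the argument above is that domination spelled out.
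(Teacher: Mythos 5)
Your proposal is correct and follows exactly the paper's own argument: the paper's proof likewise observes that any $\mathcal{C}$-trek is an $\LCQMS$-trek, deduces $\propinquity \leq \propinquity_{\mathcal{C}} = 0$, and invokes Theorem (\ref{main}). Your version merely spells out the domination in more detail; there is no gap.
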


\begin{proof}
By Definition (\ref{trek-def}), any $\mathcal{C}$-trek is a $\LCQMS$-trek, and thus:
\begin{equation*}
\propinquity((\A,\Lip_\A),(\B,\Lip_\B)) \leq \propinquity_{\mathcal{C}}((\A,\Lip_\A),(\B,\Lip_\B))
\end{equation*}
for any $(\A,\Lip_\A),(\B,\Lip_\B)\in \mathcal{C}$. Our corollary follows from Theorem (\ref{main}).
\end{proof}

\section{Comparison of the quantum propinquity with other metrics}

We have thus established that our quantum propinquity is a metric:

\begin{theorem}
Let $\mathcal{C}$ be a nonempty class of {\Lqcms s}. The quantum Gro\-mov-Haus\-dorff $\mathcal{C}$-Propinquity $\propinquity_{\mathcal{C}}$ is a metric on the isometric isomorphic classes of {\Lqcms s} in $\mathcal{C}$, in the following sense. For any three {\Lqcms s} $(\A_1,\Lip_1)$, $(\A_2,\Lip_2)$ and $(\A_3,\Lip_3)$ in $\mathcal{C}$, we have:
\begin{enumerate}
\item $\propinquity_{\mathcal{C}}((\A_1,\Lip_1),(\A_2,\Lip_2)) \in [0,\infty)$,
\item $\propinquity_{\mathcal{C}}((\A_1,\Lip_1),(\A_2,\Lip_2)) = \propinquity_{\mathcal{C}}((\A_2,\Lip_2),(\A_1,\Lip_1))$,
\item $\propinquity_{\mathcal{C}}((\A_1,\Lip_1),(\A_2,\Lip_2)) \leq \propinquity_{\mathcal{C}}((\A_1,\Lip_1),(\A_3,\Lip_3)) + \propinquity_{\mathcal{C}}((\A_3,\Lip_3),(\A_2,\Lip_2))$,
\item $\propinquity_{\mathcal{C}}((\A_1,\Lip_1),(\A_2,\Lip_2)) = 0$ if and only if there exists a isometric isomorphism $\varphi : (\A_1,\Lip_1) \rightarrow (\A_2,\Lip_2)$.
\end{enumerate}
\end{theorem}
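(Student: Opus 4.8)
The plan is to assemble results already in hand. Parts (1)--(3) are immediate. Finiteness follows from Proposition (\ref{first-bridge-prop}): for any {\Lqcms} $(\A,\Lip)$ the state space $\StateSpace(\A)$ is weak* compact and $\Kantorovich{\Lip}$ metrizes the weak* topology, so $\diam{\StateSpace(\A)}{\Kantorovich{\Lip}}<\infty$, whence $\propinquity_{\mathcal{C}}((\A_1,\Lip_1),(\A_2,\Lip_2))$ is bounded above by the maximum of two finite numbers. Symmetry and the triangle inequality are exactly Proposition (\ref{triangle-prop}); the relevant point is that the reversal and concatenation constructions there turn $\mathcal{C}$-treks into $\mathcal{C}$-treks, so no intermediate step leaves the class $\mathcal{C}$.

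For the substantive direction of (4), suppose $\propinquity_{\mathcal{C}}((\A_1,\Lip_1),(\A_2,\Lip_2))=0$. The Corollary to Theorem (\ref{main}) then produces a *-isomorphism $h:\A_1\rightarrow\A_2$ with $\Lip_2\circ h=\Lip_1$, and Theorem (\ref{quantum-isometry-prop}) identifies such an $h$ with an isometric isomorphism; thus $\propinquity_{\mathcal{C}}=0$ implies the existence of an isometric isomorphism.

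Conversely, suppose $\varphi:(\A_1,\Lip_1)\rightarrow(\A_2,\Lip_2)$ is an isometric isomorphism; I would exhibit a $\mathcal{C}$-trek of length zero. Consider $\gamma=(\A_2,\unit_{\A_2},\varphi,\mathrm{id}_{\A_2})\in\bridgeset{\A_1}{\A_2}$: this is a legitimate bridge since $\varphi$ and $\mathrm{id}_{\A_2}$ are unital *-monomorphisms and the $1$-level $\mathscr{S}_1(\unit_{\A_2})$ equals $\StateSpace(\A_2)$, hence is nonempty. Its bridge seminorm is $\bridgenorm{\gamma}{a,b}=\|\varphi(a)-b\|_{\A_2}$. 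By Theorem (\ref{quantum-isometry-prop}) we have $\Lip_2\circ\varphi=\Lip_1$, so $\varphi$ maps $\Lipball{1}{\A_1,\Lip_1}$ bijectively onto $\Lipball{1}{\A_2,\Lip_2}$; the two sets whose Hausdorff distance defines $\bridgereach{\gamma}{\Lip_1,\Lip_2}$ therefore coincide, and the reach is $0$. For the height, since $\mathscr{S}_1(\unit_{\A_2})=\StateSpace(\A_2)$ and $\StateSpace(\varphi)$ is a bijection of $\StateSpace(\A_2)$ onto $\StateSpace(\A_1)$, one gets $\StateSpace(\A_1|\gamma)=\StateSpace(\A_1)$ and $\StateSpace(\A_2|\gamma)=\StateSpace(\A_2)$, so both Hausdorff distances in Definition (\ref{bridgeheight-def}) vanish and $\bridgeheight{\gamma}{\Lip_1,\Lip_2}=0$. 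Hence $\bridgelength{\gamma}{\Lip_1,\Lip_2}=0$, and the single-bridge trek $\Gamma=((\A_1,\Lip_1,\gamma,\A_2,\Lip_2))\in\startrekset{\mathcal{C}}{\A_1,\Lip_1}{\A_2,\Lip_2}$ satisfies $\treklength{\Gamma}=0$, forcing $\propinquity_{\mathcal{C}}((\A_1,\Lip_1),(\A_2,\Lip_2))=0$.

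I do not expect a genuine obstacle here: essentially all of the difficulty has already been absorbed into Theorem (\ref{main}) and its Corollary. The only point requiring care is the verification, in the converse of (4), that the candidate zero-length bridge above really has vanishing reach and height, which reduces to the identity $\Lip_2\circ\varphi=\Lip_1$ together with the elementary fact that a *-isomorphism induces a bijection of state spaces.
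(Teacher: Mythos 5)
Your proposal is correct and follows essentially the same route as the paper: parts (1)--(3) are cited from Propositions (\ref{first-bridge-prop}) and (\ref{triangle-prop}), the necessity in (4) comes from Theorem (\ref{main}) (via its corollary for a general class $\mathcal{C}$) together with Theorem (\ref{quantum-isometry-prop}), and the sufficiency uses the same zero-length bridge $(\A_2,\unit_{\A_2},\varphi,\mathrm{id}_{\A_2})$. Your explicit verification that this bridge has vanishing reach and height simply spells out what the paper leaves as a trivial check.
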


\begin{proof}
Property (1) is a consequence of Proposition (\ref{first-bridge-prop}). Property (2)  and Property (3) are proven in Proposition (\ref{triangle-prop}). Property (4) is necessary, as shown in our main Theorem (\ref{main}) (and invoking Proposition (\ref{quantum-isometry-prop})) . Last, assume that there exists a isometric isomorphism $\varphi : (\A_1,\Lip_1)\rightarrow (\A_2,\Lip_2)$. By Definition (\ref{quantum-isometry-def}), the map $\varphi$ is a unital *-isomorphism. Define the bridge $\gamma = (\A_2,\unit_{\A_2},\varphi,\iota)$ where $\iota$ is the identity of $\A_2$. We check trivially that the length of $\gamma$ is zero, and thus Property (4) is sufficient as well.
\end{proof}

Our quantum propinquity $\propinquity$ is thus a new metric on the class of {\Lqcms s}. We now prove that it dominates the quantum Gro\-mov-Haus\-dorff distance. The key to this fact is the following construction.

\begin{notation}
The quantum Gro\-mov-Haus\-dorff distance introduced in \cite{Rieffel00} will be denoted by $\operatorname{dist}_q$.
\end{notation}

\begin{theorem}\label{Leibniz-lip-thm}
Let $(\A,\Lip_\A)$ and $(\B,\Lip_\B)$ be two {\Lqcms s}. Let $\gamma$ be a bridge from $(\A,\Lip_\A)$ to $(\B,\Lip_\B)$. Let $\varepsilon \geq 0$ be chosen so that:
\begin{equation}\label{leibniz-lip-thm-cond-1}
\bridgelength{\gamma}{\Lip_\A,\Lip_\B} + \varepsilon > 0\text{.}
\end{equation}
Define for all $(a,b) \in \A\oplus\B$:
\begin{equation}
\Lip_\varepsilon(a,b) = \max\left\{ \Lip_\A(a) ,\Lip_\B(b), \frac{1}{\bridgelength{\gamma}{\Lip_\A,\Lip_\B}+\varepsilon}\bridgenorm{\gamma}{a,b} \right\}\text{.}
\end{equation}
Then $\Lip_\varepsilon$ is an admissible Leibniz Lip-norm for $(\Lip_\A,\Lip_\B)$ and:
\begin{equation}
\Haus{\Kantorovich{\Lip_\varepsilon}} (\StateSpace(\A),\StateSpace(\B)) \leq 2\bridgelength{\gamma}{\Lip_\A,\Lip_\B}+\varepsilon \text{.}
\end{equation}
Consequently:
\begin{equation*}
\operatorname{dist}_q((\A,\Lip_\A),(\B,\Lip_\B)) \leq 2\bridgelength{\gamma}{\Lip_\A,\Lip_\B}\text{.}
\end{equation*}
\end{theorem}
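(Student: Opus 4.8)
The plan is to show that $\Lip_\varepsilon$ is an admissible Lip-norm in the sense of Rieffel \cite{Rieffel00}, then extract a Hausdorff distance estimate between state spaces, and finally let $\varepsilon \to 0$ to bound $\operatorname{dist}_q$.

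\textbf{Step 1: $\Lip_\varepsilon$ is a Lip-norm on $\sa{\A\oplus\B}$.} First I would verify that $\Lip_\varepsilon$ vanishes exactly on $\R\unit_{\A\oplus\B}$: since $\Lip_\A$ vanishes only on $\R\unit_\A$ and $\Lip_\B$ only on $\R\unit_\B$, if $\Lip_\varepsilon(a,b)=0$ then $a=s\unit_\A$, $b=t\unit_\B$, and $\bridgenorm{\gamma}{a,b}=|s-t|\,\|\omega\|_\D = 0$ forces $s=t$ because $\|\omega\|_\D\geq 1$. To see the Monge-Kantorovich metric of $\Lip_\varepsilon$ induces the weak* topology on $\StateSpace(\A\oplus\B)$, I would invoke Theorem (\ref{az-thm}): it suffices to exhibit $r\geq 0$ such that $\{(a,b) : \Lip_\varepsilon(a,b)\leq 1,\ \|(a,b)\|\leq r\}$ is totally bounded and the diameter of the state space is at most $r$. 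Totally boundedness follows since $\Lip_\varepsilon(a,b)\leq 1$ forces $\Lip_\A(a)\leq 1$ and $\Lip_\B(b)\leq 1$, so the set embeds in $\Lipball{r}{\A,\Lip_\A}\times\Lipball{r}{\B,\Lip_\B}$, which is compact by Remark (\ref{closed-rmk}); the diameter bound comes from the height/reach estimates below. The Leibniz property of $\Lip_\varepsilon$ is immediate from Inequality (\ref{bridgenorm-eq}) combined with the fact that $\Lip_\A,\Lip_\B$ are {\JLL} and that $\max$ of {\JLL} seminorms with a Leibniz-type seminorm is again {\JLL} (as already observed in the discussion preceding Theorem (\ref{Leibniz-lip-thm})).

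\textbf{Step 2: admissibility, i.e. the quotients of $\Lip_\varepsilon$ to $\A$ and $\B$ are $\Lip_\A$ and $\Lip_\B$.} The quotient seminorm on $\sa{\A}$ is $a\mapsto\inf\{\Lip_\varepsilon(a,b) : b\in\sa{\B}\}$; it obviously dominates nothing less than... well, it is $\leq \Lip_\A(a)$ by taking $b$ appropriately and $\geq \Lip_\A(a)$ since $\Lip_\varepsilon(a,b)\geq\Lip_\A(a)$ always. For the nontrivial direction I would use Lemma (\ref{bridgereach-reached-lem}): given $a$ with $\Lip_\A(a)\leq 1$, there is $b$ with $\Lip_\B(b)\leq\Lip_\A(a)\leq 1$ and $\bridgenorm{\gamma}{a,b}\leq\Lip_\A(a)\bridgereach{\gamma}{\Lip_\A,\Lip_\B}\leq\bridgereach{\gamma}{\Lip_\A,\Lip_\B}\leq\bridgelength{\gamma}{\Lip_\A,\Lip_\B}<\bridgelength{\gamma}{\Lip_\A,\Lip_\B}+\varepsilon$ (using Inequality (\ref{leibniz-lip-thm-cond-1}) when the length is zero), so $\frac{1}{\bridgelength{\gamma}{\Lip_\A,\Lip_\B}+\varepsilon}\bridgenorm{\gamma}{a,b}\leq 1$ and hence $\Lip_\varepsilon(a,b)\leq 1$. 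Thus the quotient to $\A$ is exactly $\Lip_\A$; symmetrically (using the symmetric form of Lemma (\ref{bridgereach-reached-lem}) on $\gamma^{-1}$, or the symmetric reach estimate established before Definition (\ref{bridgereach-def})), the quotient to $\B$ is $\Lip_\B$.

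\textbf{Step 3: the Hausdorff estimate and passage to the limit.} Identifying $\StateSpace(\A)$ and $\StateSpace(\B)$ with their canonical images in $\StateSpace(\A\oplus\B)$, I would bound $\Kantorovich{\Lip_\varepsilon}(\varphi,\psi)$ for $\varphi\in\StateSpace(\A)$ by first moving $\varphi$ to a state in $\StateSpace(\A|\gamma)$ at $\Kantorovich{\Lip_\A}$-distance at most $\bridgeheight{\gamma}{\Lip_\A,\Lip_\B}$ (Definition (\ref{bridgeheight-def})), which lifts to some $\chi\in\mathscr{S}_1(\omega)$, then estimating $|\chi(\pi_\A(a)) - \chi(\pi_\B(b))| = |\chi(\pi_\A(a)\omega - \omega\pi_\B(b))| \leq \bridgenorm{\gamma}{a,b}\leq(\bridgelength{\gamma}{\Lip_\A,\Lip_\B}+\varepsilon)\Lip_\varepsilon(a,b)$ using the defining property of the $1$-level (the lemma after Definition (\ref{one-level-def})) together with the reach bound, and finally correcting back to $\StateSpace(\B)$ by another height term; adding the pieces gives a state of $\B$ within $2\bridgelength{\gamma}{\Lip_\A,\Lip_\B}+\varepsilon$ of $\varphi$, and symmetrically, so $\Haus{\Kantorovich{\Lip_\varepsilon}}(\StateSpace(\A),\StateSpace(\B))\leq 2\bridgelength{\gamma}{\Lip_\A,\Lip_\B}+\varepsilon$. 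Since $\Lip_\varepsilon\in\Adm(\Lip_\A,\Lip_\B)$, by definition of Rieffel's distance $\operatorname{dist}_q((\A,\Lip_\A),(\B,\Lip_\B))\leq 2\bridgelength{\gamma}{\Lip_\A,\Lip_\B}+\varepsilon$; letting $\varepsilon\to 0$ (legitimate whenever $\bridgelength{\gamma}{\Lip_\A,\Lip_\B}>0$, and trivial otherwise after taking an infimum over admissible $\varepsilon>0$) yields the final claim. The main obstacle I anticipate is Step 3, specifically keeping careful track of which state space each intermediate state lives in and bounding the error terms so they telescope cleanly into $2\bridgelength{\gamma}{\Lip_\A,\Lip_\B}+\varepsilon$ rather than something larger; the $1$-level machinery is precisely what makes the middle estimate work, so verifying its hypotheses apply (the pivot need not be self-adjoint) requires the two-sided identity $\chi(d)=\chi(\omega d)=\chi(d\omega)$.
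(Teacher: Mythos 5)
Your proposal follows essentially the same route as the paper's proof: construct $\Lip_\varepsilon$, check nondegeneracy (via $\bridgenorm{\gamma}{\unit_\A,0}=\|\omega\|_\D\geq 1$) and the Leibniz inequality from Inequality (\ref{bridgenorm-eq}), establish admissibility using the reach (the paper uses the $\varepsilon$-slack directly from Definition (\ref{bridgereach-def}) where you invoke Lemma (\ref{bridgereach-reached-lem}); both are fine for {\Lqcms s}), and then estimate the Hausdorff distance between state spaces through a state in the $1$-level of the pivot. One structural difference: in Step 1 you re-verify the Lip-norm property of $\Lip_\varepsilon$ via Theorem (\ref{az-thm}), which obliges you to supply a diameter bound you only defer to ``below''; the paper instead deduces that $\Lip_\varepsilon$ is a Lip-norm from admissibility by citing \cite[Theorem 5.2]{Rieffel00}, which is shorter and avoids that circularity risk.

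The one point you should fix is the bookkeeping in Step 3. Once you have $\chi\in\mathscr{S}_1(\omega)$ with $\Kantorovich{\Lip_\A}(\varphi,\chi\circ\pi_\A)\leq\bridgeheight{\gamma}{\Lip_\A,\Lip_\B}$ (hence the same bound for $\Kantorovich{\Lip_\varepsilon}$, by admissibility), the state $\chi\circ\pi_\B$ \emph{already} lies in $\StateSpace(\B)$ --- indeed in $\StateSpace(\B|\gamma)$ --- so there is no ``correcting back to $\StateSpace(\B)$ by another height term''. The correct telescoping is
\begin{equation*}
\Kantorovich{\Lip_\varepsilon}(\varphi,\chi\circ\pi_\B)\leq\bridgeheight{\gamma}{\Lip_\A,\Lip_\B}+\left(\bridgelength{\gamma}{\Lip_\A,\Lip_\B}+\varepsilon\right)\leq 2\bridgelength{\gamma}{\Lip_\A,\Lip_\B}+\varepsilon\text{;}
\end{equation*}
if you literally added a third height term you would only reach $3\bridgelength{\gamma}{\Lip_\A,\Lip_\B}+\varepsilon$, which is weaker than the theorem asserts. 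The height on the $\B$ side is used only when you run the symmetric argument starting from a state of $\B$, not as a third summand here. With that repaired, your limit $\varepsilon\to 0$ at the end gives the stated bound on $\operatorname{dist}_q$ exactly as in the paper.
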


\begin{proof}
Let $\gamma = (\D,\omega,\pi_\A,\pi_\B)$, and assume $\varepsilon \geq 0$ is chosen so that Inequality (\ref{leibniz-lip-thm-cond-1}) holds.

We first prove that $(\A\oplus\B,\Lip_\varepsilon)$ is a unital Lipschitz pair. First, note that $\bridgenorm{\gamma}{\cdot,\cdot}$ is continuous on $\A\oplus\B$ by construction. In particular, one concludes that the domain of $\Lip_\varepsilon$ is dense in $\A\oplus\B$.

Second, $\bridgenorm{\gamma}{\unit_\A,0} = \|\omega\|_\D \geq 1$, so for any $(a,b)\in\sa{\A}$, if $\Lip_\varepsilon(a,b) = 0$ then $(a,b) \in \R(\unit_\A,\unit_\B)$. Indeed, if $\Lip_\varepsilon(a,b) = 0$ for some $(a,b)\in\A\oplus\B$, then $\Lip_\A(a) = \Lip_\B(b) = \bridgenorm{\gamma}{a,b} = 0$, so there exists $s,t \in \R$ such that $a = s\unit_\A$ $b = t\unit_\B$ as $(\A,\Lip_\A)$ and $(\B,\Lip_\B)$ are unital Lipschitz pairs. Thus $\bridgenorm{\gamma}{s\unit_\A,t\unit_\B} = 0$. We have:
\begin{equation*}
\begin{split}
|s-t| &= \|\omega\|_\D^{-1}\bridgenorm{\gamma}{(s-t)\unit_\A,0}\\
&= \|\omega\|_\D^{-1} \bridgenorm{\gamma}{s\unit_\A - t\unit_\A, t\unit_\B-t\unit_\B} \\
&\leq \|\omega\|_\D^{-1}\left(\bridgenorm{\gamma}{s\unit_\A,t\unit_\B} + \bridgenorm{\gamma}{t\unit_\A,t\unit_\B}  \right)\\
&= 0\text{,}
\end{split}
\end{equation*}
where we used $\bridgenorm{\gamma}{\unit_\A,\unit_\B} = 0$. Thus $s=t$, i.e. $(a,b) \in \R\unit_{\A\oplus\B}$ as claimed. So $(\A\oplus\B,\Lip_\varepsilon)$ is a unital Lipschitz pair.

We now prove that $\Lip_\varepsilon$ is admissible for $(\Lip_\A,\Lip_\B)$. Let $a\in\sa{\A}$. If $\Lip_\A(a) = \infty$ then $\Lip_\varepsilon(a,b) = \infty$ for all $b\in\sa{\B}$, so $\Lip_\A(a) = \inf\{\Lip_\varepsilon(a,b) : b\in\sa{\B}\}$ immediately. If, instead, $\Lip_\A(a)=0$, then $a = t\unit_\A$ for some $t\in\R$. By construction, $\bridgenorm{\gamma}{t\unit_\A,t\unit_\B} = 0$, and thus $\Lip_\varepsilon(t\unit_\A,t\unit_\B) = 0$, as desired.

Assume now that $0<\Lip_\A(a)<\infty$, and let $a' = \Lip_\A(a)^{-1}a$. By definition of $\bridgelength{\gamma}{\Lip_\A,\Lip_\B}$, there exists $b' \in \sa{\B}$ such that $\bridgenorm{\gamma}{a',b'} \leq \bridgelength{\gamma}{\Lip_\A,\Lip_\B}+\varepsilon$ and $\Lip_\B(b')\leq 1$. Thus, if we let $b  = \Lip_\A(a)b'$ then:
\begin{equation*}
\Lip_\B(b) \leq \Lip_\A(a)\text{ and }\bridgenorm{\gamma}{a,b}\leq \Lip_\A(a)\left(\bridgelength{\gamma}{\Lip_\A,\Lip_\B}+\varepsilon\right)\text{.}
\end{equation*}
Thus, we have shown:
\begin{equation*}
\Lip_\A(a) = \inf \{ \Lip_\varepsilon(a,b) : b \in \sa{\B} \} \text{.}
\end{equation*}
The proof is symmetric in $\A$ and $\B$ and thus, we conclude that $\Lip$ is admissible for $\Lip_\A$ and $\Lip_\B$. In particular, the canonical injection of $\StateSpace(\A)$ into $\StateSpace(\A\oplus\B)$ is an isometry from $\Kantorovich{\Lip_\A}$ to $\Kantorovich{\Lip_\varepsilon}$, and similarly the injection from $(\StateSpace(\B),\Kantorovich{\Lip_\B})$ into $(\StateSpace(\A\oplus\B),\Kantorovich{\Lip_\varepsilon})$ is an isometry. We identify $\StateSpace(\A)$ and $\StateSpace(\B)$ with their images in $\StateSpace(\A\oplus\B)$ for the canonical injections.

By \cite[Theorem 5.2]{Rieffel00}, our seminorm $\Lip_\varepsilon$ thus constructed is a Lip-norm on $\A\oplus\B$, which is admissible for $(\Lip_\A,\Lip_\B)$.

Furthermore, for any $(a,b),(a',b')\in\sa{\A\oplus\B}$, since $(\A,\Lip_\A)$ is a {\Lqcms}:
\begin{equation*}
\begin{split}
\Lip_\A(\Jordan{a}{a'}) &\leq \|a\|_\A\Lip_\A(a')+\Lip_\A(a)\|a'\|_\A\\
&\leq \|(a,b)\|_{\A\oplus\B}\Lip_\varepsilon(a,b) + \Lip_{\varepsilon}(a',b')\|(a',b')\|_{\A\oplus\B}\text{.}
\end{split}
\end{equation*}
We obtain the same upper bound for $\Lip_\B(\Jordan{b}{b'})$ as $(\B,\Lip_\B)$ is a {\Lqcms} and for $\frac{\bridgenorm{\gamma}{\Jordan{a}{a'},\Jordan{b}{b'}}}{\bridgelength{\gamma}{\Lip_\A,\Lip_\B}+\varepsilon}$ by Inequality (\ref{bridgenorm-eq}). Thus: 
\begin{equation*}
\begin{split}
\Lip_\varepsilon(\Jordan{a}{a'},\Jordan{b}{b'}) &= \max\left\{\Lip_\A(\Jordan{a}{a'}),\Lip_\B(\Jordan{b}{b'}),\frac{\bridgenorm{\gamma}{\Jordan{a}{a'},\Jordan{b}{b'}}}{\bridgelength{\gamma}{\Lip_\A,\Lip_\B}+\varepsilon}\right\}\\
&\leq \|(a,b)\|_{\A\oplus\B}\Lip_\varepsilon(a',b') + \Lip_\varepsilon(a,b)\|(a',b')\|_{\A\oplus\B}\text{.}
\end{split}
\end{equation*}
The same computation holds for the Lie product. Thus $(\A\oplus\B,\Lip_\varepsilon)$ is a {\Lqcms}.

\bigskip

We now compute the Haus\-dorff distance between $\StateSpace(\A)$ and $\StateSpace(\B)$ as subsets of $(\StateSpace(\A\oplus\B),\Kantorovich{\Lip_\varepsilon})$, for the {\mongekant} associated with $\Lip_\varepsilon$. Let $\varphi \in\StateSpace(\A)$. By Definition (\ref{bridgelength-def}) and Definition (\ref{bridgeheight-def}), there exists $\psi \in \StateSpace(\D)$ such that $\psi(\omega d) = \psi(d \omega) = \psi(d)$ for all $d\in\D$ and $\Kantorovich{\Lip_\A}(\varphi,\psi\circ\pi_\A) \leq \bridgelength{\gamma}{\Lip_\A,\Lip_\B}$. Since $\Lip_\A$ is the quotient of $\Lip$ on $\sa{\A}$, we conclude that:
\begin{equation}\label{psi-d-eq0}
\Kantorovich{\Lip_\varepsilon}(\varphi,\psi\circ\pi_\A) \leq \bridgelength{\gamma}{\Lip_\A,\Lip_\B}\text{.}
\end{equation}

Let $(a,b) \in \sa{\A\oplus\B}$ such that $\Lip_\varepsilon(a,b)\leq 1$. By definition of $\Lip_\varepsilon$, we have:
\begin{equation*}
\begin{split}
|\psi\circ\pi_\A(a) - \psi\circ\pi_\B(b)| &= |\psi(\pi_\A(a)\omega) - \psi(\omega\pi_\B)|\\
&\leq \|\pi_\A(a)\omega - \omega\pi_\B(b)\|_\D =\bridgenorm{\gamma}{a,b}\\
&\leq \bridgelength{\gamma}{\Lip_\A,\Lip_\B} + \varepsilon\text{.}
\end{split}
\end{equation*}

Thus, by Definition (\ref{mongekant-def}), we have:
\begin{equation}\label{psi-d-eq1}
\Kantorovich{\Lip_\varepsilon}(\psi\circ\pi_\A,\psi\circ\pi_\B) \leq \bridgelength{\gamma}{\Lip_\A,\Lip_\B} + \varepsilon\text{.}
\end{equation}

Thus, using Inequality (\ref{psi-d-eq0}) and (\ref{psi-d-eq1}), we have:
\begin{equation*}
\Kantorovich{\Lip_\varepsilon}(\varphi,\psi\circ\pi_\B) \leq 2\bridgelength{\gamma}{\Lip_\A,\Lip_\B} + \varepsilon\text{.}
\end{equation*}

Since $\psi\circ\pi_\B \in \StateSpace(\B)$ by construction, we have shown that $\varphi$ lies within the $(2\bridgelength{\gamma}{\Lip_\A,\Lip_\B}+\varepsilon)$-neighborhood of $\StateSpace(\B)$ for $\Kantorovich{\Lip_\varepsilon}$. As $\varphi \in \StateSpace(\A)$ was arbitrary, we conclude tat $\StateSpace(\A)$ lies within the $(2\bridgelength{\gamma}{\Lip_\A,\Lip_\B}+\varepsilon)$-neighborhood of $\StateSpace(\B)$ for $\Kantorovich{\Lip_\varepsilon}$. By symmetry in our proof for the roles of $\A$ and $\B$, we conclude that:
\begin{equation*}
\Haus{\Kantorovich{\Lip_\varepsilon}}(\StateSpace(\A),\StateSpace(\B)) \leq 2\bridgelength{\gamma}{\Lip_\A,\Lip_\B} + \varepsilon\text{.}
\end{equation*}
This proves the first statement of our Theorem.

Now:
\begin{equation*}
\operatorname{dist}_q((\A,\Lip_\A),(\B,\Lip_\B)) \leq 2\bridgelength{\gamma}{\Lip_\A,\Lip_\B} + \varepsilon\text{.}
\end{equation*}
As $\varepsilon \geq 0$ is arbitrary, the second statement of our theorem is proven as well. We remark that if $\bridgelength{\gamma}{\Lip_\A,\Lip_\B} > 0$ then we can pick $\varepsilon = 0$, and thus we have an admissible Lip-norm $\Lip_0$ which makes the bound in our theorem sharp.
\end{proof}

We can now prove the following important result:

\begin{corollary}\label{propinquity-dominates-cor}
Let $\mathcal{C}$ be any nonempty class of {\Lqcms s}. For any $(\A,\Lip_\A), (\B,\Lip_\B)\in \mathcal{C}$, we have:
\begin{equation*}
\operatorname{dist}_q((\A,\Lip_\A),(\B,\Lip_\B)) \leq 2 \propinquity_{\mathcal{C}}((\A,\Lip_\A),(\B,\Lip_\B))\text{.}
\end{equation*}
In particular, convergence of a net for the quantum propinquity implies convergence of the same net to the same limit for the quantum Gro\-mov-Haus\-dorff distance.
\end{corollary}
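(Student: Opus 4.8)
The plan is to reduce everything to the single‑bridge estimate already proved in Theorem \ref{Leibniz-lip-thm} and then to add these estimates up along a trek, using that Rieffel's $\operatorname{dist}_q$ is a genuine metric. So the first ingredient I would invoke is that $\operatorname{dist}_q$ satisfies the triangle inequality on isometry classes of compact quantum metric spaces \cite{Rieffel00}; this is the only external fact needed.

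Next I would fix $(\A,\Lip_\A),(\B,\Lip_\B)\in\mathcal{C}$ and take an arbitrary $\mathcal{C}$-trek $\Gamma=(\A_j,\Lip_j,\gamma_j,\A_{j+1},\Lip_{j+1}:j=1,\ldots,n)$ from $(\A,\Lip_\A)$ to $(\B,\Lip_\B)$. Since $\mathcal{C}\subseteq\LCQMS$, every intermediate pair $(\A_j,\Lip_j)$ is a {\Lqcms} and $\gamma_j\in\bridgeset{\A_j}{\A_{j+1}}$, so Theorem \ref{Leibniz-lip-thm} applies bridge by bridge and gives $\operatorname{dist}_q((\A_j,\Lip_j),(\A_{j+1},\Lip_{j+1}))\leq 2\bridgelength{\gamma_j}{\Lip_j,\Lip_{j+1}}$ for each $j$. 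Iterating the triangle inequality for $\operatorname{dist}_q$ along the trek, and recalling $(\A_1,\Lip_1)=(\A,\Lip_\A)$, $(\A_{n+1},\Lip_{n+1})=(\B,\Lip_\B)$, I get
\begin{equation*}
\operatorname{dist}_q((\A,\Lip_\A),(\B,\Lip_\B)) \le \sum_{j=1}^{n} \operatorname{dist}_q((\A_j,\Lip_j),(\A_{j+1},\Lip_{j+1})) \le 2\sum_{j=1}^{n}\bridgelength{\gamma_j}{\Lip_j,\Lip_{j+1}} = 2\treklength{\Gamma}\text{,}
\end{equation*}
the last equality being Definition \ref{treklength-def}.

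Finally, since this bound holds for every $\Gamma\in\startrekset{\mathcal{C}}{\A,\Lip_\A}{\B,\Lip_\B}$, taking the infimum over all such treks and using Definition \ref{propinquity-def} of $\propinquity_{\mathcal{C}}$ gives the claimed inequality $\operatorname{dist}_q((\A,\Lip_\A),(\B,\Lip_\B))\leq 2\propinquity_{\mathcal{C}}((\A,\Lip_\A),(\B,\Lip_\B))$. The ``in particular'' statement is then immediate: if a net converges to some limit for $\propinquity_{\mathcal{C}}$, the same net converges to the same limit for $\operatorname{dist}_q$ by monotonicity of the bound. There is no real obstacle to overcome here — the difficulty is entirely concentrated in Theorem \ref{Leibniz-lip-thm} (and, upstream of it, in the coincidence theorem); the trek-based definition of the propinquity was designed precisely so that this comparison with $\operatorname{dist}_q$ is a one-line bookkeeping argument once the single-bridge estimate is in hand.
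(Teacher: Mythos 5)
Your proof is correct and follows essentially the same route as the paper: apply Theorem (\ref{Leibniz-lip-thm}) to each bridge of a trek, chain the estimates with the triangle inequality for $\operatorname{dist}_q$, and pass to the infimum over treks (the paper phrases this last step by choosing a trek within $\varepsilon/2$ of the propinquity, which is the same argument).
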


\begin{proof}
Let $\varepsilon > 0$. By Definition (\ref{propinquity-def}), there exists a $\mathcal{C}$-trek
\begin{equation*}
\Gamma = (\A_j,\Lip_j,\gamma_j,\A_{j+1},\Lip_{j+1} : j = 1,\ldots,n)
\end{equation*}
such that:
\begin{equation*}
\treklength{\Gamma} \leq \propinquity_{\mathcal{C}}((\A,\Lip_\A),(\B,\Lip_\B)) + \frac{\varepsilon}{2}\text{.}
\end{equation*}
Now, since $\operatorname{dist}_q$ satisfies the triangle inequality, we have:
\begin{equation*}
\begin{split}
\operatorname{dist}_q((\A,\Lip_\A),(\B,\Lip_\B)) &\leq \sum_{j=1}^n \operatorname{dist}_q((\A_j,\Lip_j),(\A_{j+1},\Lip_{j+1}))\\
&\leq 2 \sum_{j=1}^n \bridgelength{\gamma_j}{\Lip_j,\Lip_{j+1}} \text{ by Theorem (\ref{Leibniz-lip-thm}),}\\
&\leq 2\propinquity_{\mathcal{C}}((\A,\Lip_\A),(\B,\Lip_\B)) + \varepsilon\text{.}
\end{split}
\end{equation*}
As $\varepsilon > 0$, we have proven our corollary.
\end{proof}

As a remark, we can of course construct natural Lip-norms which gives us the result of Corollary (\ref{propinquity-dominates-cor}). Let $(\A,\Lip_\A)$ and $(\B,\Lip_\B)$ be two {\Lqcms s}, and $\Gamma$ be a trek from $(\A,\Lip_\A)$ and $(\B,\Lip_\B)$. Write $\Gamma = (\A_j,\Lip_j,\gamma_j,\A_{j+1},\Lip_{j+1} : j=1,\ldots,n)$. For any $\varepsilon > 0$, define:

\begin{equation*}
\treknorm{\Gamma,\varepsilon}{a,b} = \inf \left\{ \max\left\{ \frac{\bridgenorm{\gamma_j}{\eta_j,\eta_{j+1}}}{\bridgelength{\gamma_j}{\Lip_j,\Lip_{j+1}} + \frac{\varepsilon}{n}}  \right\} : \eta \in \itineraries{\Gamma}{a}{b}  \right\} \text{.}
\end{equation*}

Then define, for all $(a,b)\in\sa{\A\oplus\B}$:
\begin{equation*}
\Lip_\varepsilon(a,b) = \max\{ \Lip_\A(a),\Lip_\B(b), \treknorm{\Gamma,\varepsilon}{a,b} \}\text{.}
\end{equation*}
Then the reader may check that $\Lip_\varepsilon$ is an admissible Lip-norm on $\sa{\A\oplus\B}$ for $(\Lip_\A,\Lip_\B)$. Moreover, if $\Gamma$ is chosen as in the proof of Corollary (\ref{propinquity-dominates-cor}), then:
\begin{equation*}
\Haus{\Kantorovich{\Lip_\varepsilon}}(\StateSpace(\A),\StateSpace(\B)) \leq 2\propinquity((\A,\Lip_\A),(\B,\Lip_\B)) + \varepsilon\text{.}
\end{equation*}
Yet, even though $\Lip_\A$ and $\Lip_\B$ are Leibniz, the Lip-norm $\Lip_\varepsilon$ may not be. However, in this case, we can find finitely many Leibniz Lip-norms and intermediate {\Lqcms s} given by Theorem (\ref{Leibniz-lip-thm}) to connect $(\A,\Lip_\A)$ and $(\B,\Lip_\B)$.

A careful inspection of the proof of Theorem (\ref{main}) reveals that the key to our quantum propinquity's behavior is that it provides not only a control on the Lip-norms, but also a control on the norms of elements. More precisely, we get the following quantum version of the Lipschitz extension property: if
\begin{equation*}
\propinquity((\A,\Lip_\A),(\B,\Lip_B)) = d\text{,}
\end{equation*}
then for all $a\in \sa{\A}$ and $\varepsilon > 0$, we can find $b \in \sa{\B}$ such that:
\begin{equation*}
\Lip_\B(b)\leq \Lip_\A(a)\text{ and }\|b\|_\B\leq \|a\|_\A + 2\Lip(a)(d + \varepsilon)\text{,}
\end{equation*}
using Proposition (\ref{fundamental-prop}). Thus, we have some control on how far from Leibniz the trek norms are. 

\begin{remark}
We note that the Lip-norms constructed in Theorem (\ref{Leibniz-lip-thm}) are strong Leibniz if $\Lip_\A$, $\Lip_\B$ are strong Leibniz, using the notations of (\ref{Leibniz-lip-thm}). Thus, the quantum propinquity provides strong Leibniz Lip-norm. However, Rieffel's proximity \cite{Rieffel10c} does not satisfy, as far as we know, the triangle inequality, so it is unclear whether the quantum propinquity dominates Rieffel's proximity.
\end{remark}

We conclude this section by comparing the quantum propinquity with the Gro\-mov-Haus\-dorff distance. It is proven in \cite{Rieffel00} that the Gro\-mov-Haus\-dorff distance dominates the quantum Gro\-mov-Haus\-dorff distance restricted to the compact quantum metric spaces given by Example (\ref{classical-ex}). Thus Theorem (\ref{propinquity-dominates-cor}) is not enough to compare our quantum propinquity with the Gro\-mov-Haus\-dorff distance. Yet, we have:

\begin{theorem}\label{gh-thm}
Let $(X,\mathsf{d}_X)$ and $(Y,\mathsf{d}_Y)$ be two compact metric spaces, and let $\mathsf{GH}$ be the Gro\-mov-Haus\-dorff distance \cite{Gromov}. Then:
\begin{equation*}
\propinquity((C(X),\Lip_X),(C(Y),\Lip_Y)) \leq \mathsf{GH}((X,\mathsf{d}_X),(Y,\mathsf{d}_Y))\text{,}
\end{equation*}
where $\Lip_X$ and $\Lip_Y$ are, respectively, the Lipschitz seminorms associated to $\mathsf{d}_X$ and $\mathsf{d}_Y$.
\end{theorem}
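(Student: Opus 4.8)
The plan is to produce, for each $\varepsilon > 0$, a single bridge $\gamma$ from $(C(X),\Lip_X)$ to $(C(Y),\Lip_Y)$ whose length is at most $\gh((X,\mathsf{d}_X),(Y,\mathsf{d}_Y)) + \varepsilon$; then the trek consisting of that one bridge shows $\propinquity((C(X),\Lip_X),(C(Y),\Lip_Y)) \leq \gh((X,\mathsf{d}_X),(Y,\mathsf{d}_Y)) + \varepsilon$, and letting $\varepsilon \to 0$ finishes the argument. To build the bridge, I would start from the metric-space definition of the Gromov--Haus\-dorff distance: fix $\varepsilon > 0$ and an admissible metric $\mathsf{d}$ on the disjoint union $Z = X \sqcup Y$ (extending $\mathsf{d}_X$ and $\mathsf{d}_Y$) such that $\Haus{\mathsf{d}}(X,Y) < \gh((X,\mathsf{d}_X),(Y,\mathsf{d}_Y)) + \varepsilon$. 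Then $(Z,\mathsf{d})$ is itself a compact metric space, and there are natural restriction $*$-epimorphisms; but I actually want $*$-\emph{mono}morphisms into a common algebra, so instead I would take $\D = C(Z)$ with the two unital $*$-monomorphisms $\pi_X : C(X) \hookrightarrow C(Z)$ and $\pi_Y : C(Y) \hookrightarrow C(Z)$ obtained by composing restriction-dual maps appropriately --- more carefully, embed $C(X)$ and $C(Y)$ as the subalgebras of $C(Z)$ of functions vanishing on the complementary piece is wrong since those are not unital, so instead use the Tietze/McShane extension to realize $C(X)$ and $C(Y)$ as $*$-isomorphic images in $C(Z)$ via extension operators that are unital $*$-homomorphisms. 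The cleanest route is: let $\pi_X$ send $f \in C(X)$ to the function on $Z$ which is $f$ on $X$ and equals $f$ precomposed with a fixed nearest-point (or any) Borel-measurable projection-like map onto $X$... but that need not be continuous. So I would instead take $\D = \ell^\infty(Z)$ or $C_b(Z)$ is unnecessary since $Z$ is compact; the honest fix is to take $\D$ to be $C(X) \oplus C(Y)$? That loses the metric.

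The correct and standard device here is this: set $\D = C(X \sqcup Y)$ and observe $C(X\sqcup Y) \cong C(X)\oplus C(Y)$ as C*-algebras, with $\pi_X : f \mapsto (f,0)$ failing to be unital. To repair unitality while keeping the bridge seminorm tied to $\mathsf{d}$, choose the \emph{pivot} $\omega$ rather than the embeddings to carry the geometric content. Concretely, I would take $\D = C(X\sqcup Y)$, $\pi_X(f) = (f, f|_{\text{?}})$ --- this still requires an extension. Let me instead adopt the approach that genuinely works: pick $\D = C(Z)$ with $Z = X\sqcup Y$ metrized by $\mathsf{d}$, let $\pi_X$ be \emph{any} unital $*$-monomorphism $C(X)\hookrightarrow C(Z)$ extending continuously (which exists because $X$ is a retract-up-to... no). The genuinely robust construction, and the one I expect the author uses: take $\D = C(Z)$, and let $\pi_X$ be composition with a continuous surjection is impossible in general. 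Therefore the realistic plan is to take $\D$ = the C*-algebra of bounded functions on $X \sqcup Y$ that are \emph{Lipschitz}... no. I will instead take $\D = C(X) \oplus C(Y)$ with $\pi_X(f) = (f, E_Y(f))$ where $E_Y : C(X) \to C(Y)$ need not exist as a homomorphism.

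Given these genuine subtleties, the plan I would actually execute is: let $Z = X \sqcup Y$ with admissible metric $\mathsf{d}$ as above, put $\D = C(Z)$, and take $\pi_X, \pi_Y$ to be the unital $*$-monomorphisms sending $f$ to $(f, f\circ p_X)$ and $(g\circ p_Y, g)$ respectively --- \textbf{no}, abandon trying to make $\pi$ carry $\mathsf{d}$. The right idea: the whole point of the pivot element in Definition (\ref{bridge-def}) is precisely to allow us to \emph{avoid} needing compatible embeddings. So take $\D = C(X) \oplus C(Y) \cong C(X \sqcup Y)$ with the two \emph{coordinate} inclusions $\pi_X(f) = (f,0)$, $\pi_Y(g) = (0,g)$ --- these are not unital. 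Thus even this fails Definition (\ref{bridge-def}), which demands unital $*$-monomorphisms. Consequently the only viable route is indeed to produce genuine unital embeddings $C(X) \hookrightarrow C(Z)$ and $C(Y) \hookrightarrow C(Z)$ for a single compact $Z$ encoding both metrics; the classical way to do this is to take $Z$ to be a compact metric space admitting continuous \emph{surjections} $q_X : Z \twoheadrightarrow X$ and $q_Y : Z \twoheadrightarrow Y$ which are $\varepsilon$-isometric in the sense that $|\mathsf{d}_X(q_X(z),q_X(z')) - \mathsf{d}_Y(q_Y(z),q_Y(z'))| < \varepsilon$ for all $z,z'$ (such $Z$, e.g. the graph of an $\varepsilon$-correspondence suitably completed, exists whenever $\gh < \varepsilon/2$; this is the correspondence formulation of Gromov--Hausdorff distance). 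Then $\pi_X = q_X^* : C(X) \hookrightarrow C(Z)$ and $\pi_Y = q_Y^* : C(Y)\hookrightarrow C(Z)$ are unital $*$-monomorphisms, and I take $\omega = \unit_\D$, so the bridge seminorm is $\bridgenorm{\gamma}{f,g} = \|f\circ q_X - g\circ q_Y\|_{C(Z)}$. With $\omega = \unit_\D$ the $1$-level is all of $\StateSpace(\D)$, hence $\pi_X^*, \pi_Y^*$ surject onto $\StateSpace(C(X)), \StateSpace(C(Y))$, giving $\bridgeheight{\gamma}{\Lip_X,\Lip_Y} = 0$ by Definition (\ref{bridgeheight-def}); so $\bridgelength{\gamma}{\Lip_X,\Lip_Y} = \bridgereach{\gamma}{\Lip_X,\Lip_Y}$ by Definition (\ref{bridgelength-def}). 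It then remains to bound the reach: given a $1$-Lipschitz $f$ on $X$, $f\circ q_X$ is an $(1+\delta)$-Lipschitz-type function on $Z$ for the pulled-back pseudometric, and using the $\varepsilon$-isometry of $q_X,q_Y$ together with the McShane/Kirszbraun extension of $f\circ q_X$ to a $1$-Lipschitz function $\tilde f$ on $(Z,\mathsf{d}_Z)$ and then defining $g$ on $Y$ by $g(y) = \tilde f(\text{any } z \text{ with } q_Y(z)=y)$ --- here one must check well-definedness and Lipschitzness of $g$, which works because $q_Y$ is close to an isometry. One shows $\Lip_Y(g) \leq 1$ (after rescaling by $1+O(\varepsilon)$) and $\|f\circ q_X - g\circ q_Y\|_{C(Z)} \leq \mathsf{d}_{\gh}(X,Y) + O(\varepsilon)$, and symmetrically. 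Hence $\bridgelength{\gamma}{\Lip_X,\Lip_Y} \leq \gh((X,\mathsf{d}_X),(Y,\mathsf{d}_Y)) + O(\varepsilon)$, and the trek $((C(X),\Lip_X),\gamma,(C(Y),\Lip_Y))$ gives the claimed bound after $\varepsilon \to 0$.

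The main obstacle is the reach estimate: the pullbacks $f \circ q_X$ and $g\circ q_Y$ live on $Z$ but the map $z\mapsto (q_X(z),q_Y(z))$ need not be injective, so one has to be careful that the Lipschitz extension / transfer of $f$ to a function $g$ on $Y$ stays $1$-Lipschitz (up to the $\varepsilon$-loss) and that the sup-norm difference $\|f\circ q_X - g\circ q_Y\|_{C(Z)}$ is genuinely controlled by the Hausdorff distance in the admissible metric; this is exactly the classical lemma (essentially $\mathsf{bl}$- or $\mathsf{mk}$-duality plus McShane extension) that the Lipschitz seminorm and the Monge--Kantorovich metric on $C(X)$ are related to $\mathsf{d}_X$ the way the classical Kantorovich--Rubinstein theorem dictates, transported across the $\varepsilon$-correspondence. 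A secondary technical point is verifying that the pivot $\omega = \unit_\D$ makes $\gamma$ a legitimate bridge (nonempty $1$-level, which is immediate) and that $\D = C(Z)$ with $Z$ compact metric is a unital C*-algebra as required; these are routine. I would expect the bulk of the writing to go into the reach bound, with everything else following mechanically from Definitions (\ref{bridge-def})--(\ref{propinquity-def}).
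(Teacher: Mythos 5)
Your overall architecture is the right one, and it is essentially the paper's: a single bridge $\gamma=(C(Z),\unit,\pi_X,\pi_Y)$ where $Z$ is a compact ``correspondence'' space with surjections onto $X$ and $Y$, the pivot is the unit (so the height vanishes because the dual maps of $\pi_X,\pi_Y$ are surjective on state spaces), the reach is controlled by a McShane-type Lipschitz extension, and one lets $\varepsilon\to 0$. The paper realizes $Z$ concretely as $\{(x,y)\in X\times Y:\mathsf{d}(x,y)\leq\delta+2\varepsilon\}$, where $\mathsf{d}$ is an admissible metric on $X\coprod Y$ with $\Haus{\mathsf{d}}(X,Y)<\delta+\varepsilon$, $\delta=\gh(X,Y)$.

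However, the step you yourself flag as the main obstacle --- the reach estimate --- is genuinely gapped as written, in two ways. First, your transfer $g(y):=\tilde f(z)$ for ``any $z$ with $q_Y(z)=y$'' is not well defined: $\tilde f$ is not constant on the fibers of $q_Y$ (only approximately so), and neither continuity of $g$ nor the rescaling needed to repair $\Lip_Y(g)\leq 1+O(\varepsilon)$ down to $\Lip_Y(g)\leq 1$ is carried out (the latter needs care, since the Lip-ball is unbounded in sup norm; one must first center $g$ at a point and use $\operatorname{diam}(Y)$ to bound the error). Second, and more seriously, having switched to the distortion formulation ($|\mathsf{d}_X(q_X(z),q_X(z'))-\mathsf{d}_Y(q_Y(z),q_Y(z'))|<\varepsilon$ with $\varepsilon\approx 2\,\gh$), the only quantity available to bound $|f(q_X(z))-g(q_Y(z))|$ by a Lipschitz-transfer argument is the distortion itself, which is about $2\,\gh(X,Y)$; your claimed bound $\gh(X,Y)+O(\varepsilon)$ does not follow from that data alone, so this route would only yield $\propinquity\leq 2\,\gh$. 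The fix is precisely to keep the admissible metric $\mathsf{d}$ on $X\coprod Y$ that you introduced at the outset and then abandoned: take $Z$ to be the set of pairs at $\mathsf{d}$-distance at most $\delta+2\varepsilon$ (the coordinate projections remain surjective since $\Haus{\mathsf{d}}(X,Y)<\delta+\varepsilon$), extend a given $1$-Lipschitz $f$ on $X$ to a genuinely $1$-Lipschitz $\tilde f$ on $(X\coprod Y,\mathsf{d})$ by McShane, and set $g=\tilde f|_Y$; then $\Lip_Y(g)\leq 1$ exactly, and for $(x,y)\in Z$ one has $|f(x)-g(y)|=|\tilde f(x)-\tilde f(y)|\leq\mathsf{d}(x,y)\leq\delta+2\varepsilon$, so $\bridgenorm{\gamma}{f,g}\leq\delta+2\varepsilon$ with no loss of a factor $2$ and no ill-defined transfer. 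With that replacement your argument becomes the paper's proof.
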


\begin{proof}
Let $\delta = \mathsf{GH}((X,\mathsf{d}_X),(Y,\mathsf{d}_Y))$. Let $\varepsilon > 0$. By definition of the Gro\-mov-Haus\-dorff distance, there exists a distance $\mathsf{d}$ on the disjoint union $X\coprod Y$ of $X$ and $Y$ which restricts to $\mathsf{d}_X$ on $X$ and $\mathsf{d}_Y$ on $Y$, and such that the Haus\-dorff distance for $\mathsf{d}$ between $X$ and $Y$ is less than $\delta+\varepsilon$.
\begin{equation*}
Z = \{ (x,y) \in X\times Y : \mathsf{d}(x,y) \leq \delta + 2\varepsilon \}\text{.}
\end{equation*}
We endow $Z$ with the restriction of the product topology on $X\times Y$. Thus, $Z$ is easily checked to be compact, and moreover, by our choice of $\delta$ and $\mathsf{d}$, the canonical surjection $X\times Y\twoheadrightarrow X$ and $X\times Y\twoheadrightarrow Y$ are still surjections when restricted to $Z$, which we denote, respectively, by $\rho_X$ and $\rho_Y$. We define $\pi_X : f\in C(X)\mapsto f\circ\rho_X$ and $\pi_Y : f\in C(Y) \mapsto f\circ\rho_Y$. Thus, by definition, $\gamma = (C(Z),1,\pi_X,\pi_Y)$ is a bridge, whose height is null by construction.

Let $f \in \sa{C(X)}$ with $\Lip_X(f)\leq 1$. Let $\tilde{f} : X\coprod Y \rightarrow \R$ be a $1$-Lipschitz extension of $f$ to $(X\coprod Y,\mathsf{d})$. Let $g$ be the restriction of $\tilde{f}$ to $Y$ and note that $\Lip_Y(g)\leq 1$. We have, for all $(x,y) \in Z$:
\begin{equation*}
| f(x) - g(y) | = |\tilde{f}(x) - \tilde{f}(y)| \leq \mathsf{d}(x,y)\leq \delta + 2\varepsilon \text{.}
\end{equation*}
Hence, $\bridgelength{\gamma}{\Lip_X,\Lip_Y} \leq \delta + 2\varepsilon$. As $\varepsilon>0$ was arbitrary, we have proven our theorem.
\end{proof}

\begin{remark}
Let $\mathcal{C}$ be a class of {\Lqcms s} which include all the {\Lqcms s} given by Example (\ref{classical-ex}). Then Theorem (\ref{gh-thm}) adapts immediately to $\propinquity_{\mathcal{C}}$ as well.
\end{remark}

Last, we note that the quantum tori and the fuzzy tori form a continuous family for the quantum propinquity, thus strengthening our result in \cite{Latremoliere05}. The techniques we developed in \cite{Latremoliere05} are the basis upon which this result is proven. In \cite[Remark 5.5]{li03}, it was shown that when restricted to the class of nuclear C*-algebras, the nuclear distance $\dist_{\mathrm{nu}}$ satisfies \cite[Theorem 5.2, Theorem 5.3]{li03} and thus, in turn, our work in \cite{Latremoliere05} applies to show that the quantum and fuzzy tori form a continuous family for the nuclear distance. 

However, the nuclear distance may not dominate our quantum propinquity. The unital nuclear distance is a modified version of Li's nuclear distance, offered by Kerr and Li in \cite{Kerr09}. As pointed out in \cite{Kerr09}, fuzzy and quantum tori form a continuous family in the sense described below in Theorem (\ref{qt-thm}) for the unital nuclear distance as well. The argument is similar to the one for the nuclear distance. Indeed, a deep result of Blanchard \cite{Blanchard97}, which builds upon a remarkable observation of Haagerup and R{\o}rdam \cite{Rordam95}, shows that continuous fields of nuclear C*-algebras over compact metric spaces can be sub-trivialized \cite{Blanchard97}.

Since the quantum propinquity is dominated by the unital nuclear distance \cite{Kerr09}, we obtain the following analogue of \cite[Theorem 3.13]{Latremoliere05}:

\begin{theorem}\label{qt-thm}
Let $H_\infty$ be a compact Abelian group endowed with a continuous length function $\ell$. Let $(H_n)_{n\in\N}$ be a sequence of closed subgroups of $H_\infty$ converging to $H_\infty$ for the Hausdorff distance induced by $\ell$ on the class of closed subsets of $H_\infty$. Let $\sigma_\infty$ be a skew bicharacter of the Pontryagin dual $\widehat{H_\infty}$. For each $n\in\N$, we let $\sigma_n$ be a skew bicharacter of $\widehat{H_n}$, which we implicitly identity with its unique lift as a skew bicharacter of $\widehat{H_\infty}$. If $(\sigma_n)_{n\in\N}$ converges pointwise to $\sigma_\infty$, then:
\begin{equation*}
\lim_{n\rightarrow\infty} \propinquity\left(\left(C^\ast\left(\widehat{H_n},\sigma_n\right),\Lip_n\right),\left(C^\ast\left(\widehat{H_\infty},\sigma_\infty\right),\Lip_\infty\right)\right) = 0\text{,}
\end{equation*}
where for all $n\in\N\cup\{\infty\}$ and $a\in C^\ast\left(\widehat{H_n},\sigma_n\right)$ we set:
\begin{equation*}
\Lip_n (a) = \sup\left\{\frac{\|a-\alpha_n^g(a)\|_{C^\ast\left(\widehat{H_n},\sigma_n\right)}}{\ell(g)} : g\in H_\infty\setminus\{1_{H_\infty}\} \right\}
\end{equation*}
with $1_{H_\infty}$ is the unit of $H_\infty$ and $\alpha_n$ is the dual action of $H_n$ on $C^\ast\left(\widehat{H_n},\sigma_n\right)$.
\end{theorem}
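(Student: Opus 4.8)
The plan is to deduce the statement from the domination of the quantum propinquity by the unital nuclear distance $\dist_{\mathrm{nu}}^{\mathrm{u}}$ of Kerr and Li \cite{Kerr09}, recalled above, which reduces the claim to proving that
\begin{equation*}
\lim_{n\to\infty}\dist_{\mathrm{nu}}^{\mathrm{u}}\left(\left(C^\ast\left(\widehat{H_n},\sigma_n\right),\Lip_n\right),\left(C^\ast\left(\widehat{H_\infty},\sigma_\infty\right),\Lip_\infty\right)\right)=0\text{.}
\end{equation*}
This is the exact analogue of \cite[Theorem 3.13]{Latremoliere05}, and I would establish it by the strategy used there, with Blanchard's subtrivialization theorem playing the role of the explicit continuous field of twisted group C*-algebras constructed in \cite{Latremoliere05}. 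In fact, unwinding the definition of $\dist_{\mathrm{nu}}^{\mathrm{u}}$, the construction below produces, for each $\varepsilon>0$ and all large $n$, a bridge in the sense of Definition (\ref{bridge-def}) from $C^\ast(\widehat{H_n},\sigma_n)$ to $C^\ast(\widehat{H_\infty},\sigma_\infty)$ of length less than $\varepsilon$, so that the single-bridge trek it determines already witnesses $\propinquity<\varepsilon$ directly; invoking the domination merely lets us import the hard estimate from \cite{li03,Latremoliere05} verbatim.

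First, I would assemble the data $\{(H_n,\sigma_n)\}_{n\in\N}$ together with $(H_\infty,\sigma_\infty)$ into a field of C*-algebras over the one-point compactification $\N\cup\{\infty\}$, with fiber $C^\ast(\widehat{H_n},\sigma_n)$ over $n\in\N$ and fiber $C^\ast(\widehat{H_\infty},\sigma_\infty)$ over $\infty$, the polynomial sections being built from the canonical generating unitaries (those over $\widehat{H_n}$ being the images of the ones over $\widehat{H_\infty}$ under the quotient $\widehat{H_\infty}\twoheadrightarrow\widehat{H_n}$). The crucial point to check is that this field is \emph{continuous}, that is, $n\mapsto\|x(n)\|_{C^\ast(\widehat{H_n},\sigma_n)}$ is continuous for each polynomial section $x$; this follows from the Hausdorff convergence $H_n\to H_\infty$ and the pointwise convergence $\sigma_n\to\sigma_\infty$, and is precisely the content of \cite[Theorems 5.2 and 5.3]{li03} used in \cite{Latremoliere05}. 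Since each $C^\ast(\widehat{H_n},\sigma_n)$ is nuclear (finite dimensional in the fuzzy case), I would then invoke Blanchard's theorem \cite{Blanchard97}, which rests on the observation of Haagerup and R{\o}rdam \cite{Rordam95}, that a continuous field of nuclear C*-algebras over a compact metric space is subtrivial: there exist a unital C*-algebra $\D$ and unital $\ast$-monomorphisms $\theta_n:C^\ast(\widehat{H_n},\sigma_n)\hookrightarrow\D$ and $\theta_\infty:C^\ast(\widehat{H_\infty},\sigma_\infty)\hookrightarrow\D$ such that for every continuous section $x$ the function $n\mapsto\|\theta_n(x(n))\|_\D$ is continuous; in particular $\theta_n$ carries each generating unitary of the $n$-th fiber to within $o(1)$ of the corresponding generator of the $\infty$-th fiber inside $\D$.

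Next, I would show that the bridge $\gamma_n=(\D,\unit_\D,\theta_n,\theta_\infty)$ has length less than $\varepsilon$ for all large $n$. Its height is identically zero, since the $1$-level of $\unit_\D$ is all of $\StateSpace(\D)$ and the dual maps $\StateSpace(\theta_n)$, $\StateSpace(\theta_\infty)$ are surjective. For the reach, one must bound the Hausdorff distance in $\D$ between $\theta_n\bigl(\Lipball{1}{C^\ast(\widehat{H_n},\sigma_n),\Lip_n}\bigr)$ and $\theta_\infty\bigl(\Lipball{1}{C^\ast(\widehat{H_\infty},\sigma_\infty),\Lip_\infty}\bigr)$, and here I would use the truncation argument of \cite{Latremoliere05}: given $\varepsilon>0$, the defining estimate of $\Lip_n$ in terms of $\ell$ and the dual action, combined with a Fej\'er-type averaging over the group, allows one to fix a finite set $F$ of spectral modes such that every $a$ with $\Lip_n(a)\le1$ lies within $\varepsilon$ of its $F$-truncation, uniformly in $n\in\N\cup\{\infty\}$; one then transports an $F$-truncation from the $n$-th fiber to the $\infty$-th fiber by substituting each generating unitary by its counterpart, which by continuity of the field and the convergence $\sigma_n\to\sigma_\infty$ moves the element by $o(1)$ in $\D$ and changes its Lip-norm by $o(1)$ as well. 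This gives a length estimate for $\gamma_n$ below $\varepsilon$ for $n$ large, hence $\dist_{\mathrm{nu}}^{\mathrm{u}}\to0$ and, by domination, $\propinquity\to0$, which is the assertion.

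The main obstacle is the one that already made \cite{Latremoliere05} delicate: the C*-norms $\|\cdot\|_{C^\ast(\widehat{H_n},\sigma_n)}$ genuinely vary with $n$, so Lip-balls in different fibers cannot be compared directly, and everything hinges on first proving that the field assembled from $\{(H_n,\sigma_n)\}$ is a \emph{continuous} field — so that, through the subtrivialization $\theta_n$, transporting a truncated element across fibers becomes a norm-continuous operation — and, more subtly, that the dual actions defining $\Lip_n$ vary continuously enough with $n$ that the Lip-norms, not merely the underlying C*-norms, are respected up to $\varepsilon$. Establishing this continuity for an arbitrary compact Abelian $H_\infty$ with closed subgroups $H_n\to H_\infty$ and pointwise convergent cocycles is the technical heart of the proof, and is exactly what \cite[Theorems 5.2 and 5.3]{li03} and \cite[Theorem 3.13]{Latremoliere05} supply.
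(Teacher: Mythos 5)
Your proposal is correct and follows essentially the same route as the paper: the theorem is obtained by combining the domination of the quantum propinquity by Kerr and Li's unital nuclear distance with the known convergence of the fuzzy and quantum tori for that distance, which rests on the continuity results of \cite{li03}, the techniques of \cite{Latremoliere05}, and Blanchard's subtrivialization theorem \cite{Blanchard97}. Your additional sketch of the single-bridge trek with pivot $\unit_\D$ and zero height is consistent with how the unital nuclear distance controls the propinquity, so no genuinely different argument is involved.
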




\bibliographystyle{amsplain}
\providecommand{\bysame}{\leavevmode\hbox to3em{\hrulefill}\thinspace}
\providecommand{\MR}{\relax\ifhmode\unskip\space\fi MR }
\providecommand{\MRhref}[2]{%
  \href{http://www.ams.org/mathscinet-getitem?mr=#1}{#2}
}
\providecommand{\href}[2]{#2}

\vfill
\end{document}